\documentclass[12pt,leqno]{amsart}
\usepackage[latin1]{inputenc}
\usepackage{color}
\usepackage{amssymb,amsmath,amsthm,amscd,amsfonts}
\usepackage{enumerate}
\usepackage[shortlabels]{enumitem}
\usepackage[all]{xy}
\vfuzz2pt
\hfuzz2pt
\usepackage{hyperref,cleveref,graphics,mathrsfs}


\numberwithin{equation}{section}
\def\hangbox to #1 #2{\vskip3pt\hangindent #1\noindent \hbox to #1{#2}$\!\!$}


\usepackage{hyperref,cleveref,amssymb,mathtools}

\theoremstyle{plain}
\newtheorem{theorem}{Theorem}[section]
\newtheorem{proposition}[theorem]{Proposition}
\newtheorem{corollary}[theorem]{Corollary}
\newtheorem{lemma}[theorem]{Lemma}

\theoremstyle{definition}
\newtheorem{remark}[theorem]{Remark}
\newtheorem{question}[theorem]{Question}

\newtheorem{example}[theorem]{Example}
\newtheorem{definition}[theorem]{Definition}


\newcommand{\marg}[1]{\marginpar{\tiny #1}}     

\DeclareSymbolFont{bbold}{U}{bbold}{m}{n}
\DeclareSymbolFontAlphabet{\mathbbold}{bbold}
\def\one{\mathbbold{1}}



\def\C{{\mathbb C}}
\def\N{{\mathbb N}}
\def\R{{\mathbb R}}

\def\F{{\mathbb F}}






\def\sfrac#1#2{\kern.1em\raise.5ex\hbox{$#1$}
        \kern-.1em/\kern-.05em\lower.25ex\hbox{$#2$}}

\def\vp{\varepsilon}
\def\dim{\operatorname{dim}}


\newcommand{\sign}{\text{\rm sign}}

\newcommand{\fw}{\text{\fw}}



\newcommand{\be}{\begin{equation}}
\newcommand{\ee}{\end{equation}}

\newcommand{\is}{\mathcal{S}}

\newcommand{\uu}{\mathfrak{U}}

\newcommand{\spn}{{\mathrm{span} \, }}

\newcommand{\timur}[1]{{\textcolor{blue}{{\bf T.O:} #1}}}



\title[Stable phase retrieval in function  spaces]{Stable phase retrieval in function  spaces}
\author{D.\ Freeman}
\address{Department of Mathematics and Statistics\\
St Louis University\\
St Louis, MO   USA} \email{daniel.freeman@slu.edu}
\author{T.~Oikhberg}
\address{Dept. of Mathematics, University of Illinois, Urbana IL 61801, USA}
\email{oikhberg@illinois.edu}
\author{B.\ Pineau}
\address{Department of Mathematics\\
University of California at Berkeley
} \email{bpineau@berkeley.edu}
\author{M.A.\ Taylor}
\address{Department of Mathematics\\
University of California at Berkeley
} \email{mitchelltaylor@berkeley.edu}
\date{\today}

\subjclass[2020]{46B42, 43A46, 42C15} 

\keywords{Phase retrieval; stable phase retrieval.}

\thanks{D.~Freeman  was supported by the Simons Foundation award 706481  and  the NSF award 2154931.  T.~Oikhberg was supported by the NSF award 1912897. \\ }

\begin{document}
\begin{abstract}
    Let $(\Omega,\Sigma,\mu)$ be a measure space, and $1\leq p\leq \infty$. A subspace $E\subseteq L_p(\mu)$ is said to do \emph{stable phase retrieval (SPR)} if there exists a constant $C\geq 1$ such that for any $f,g\in E$ we have 
    \begin{equation}
       \inf_{|\lambda|=1} \|f-\lambda g\|\leq C\||f|-|g|\|.
    \end{equation}
    In this  case, if $|f|$ is known, then $f$ is uniquely determined up to an unavoidable global phase factor $\lambda$; moreover, the phase recovery map is $C$-Lipschitz. Phase retrieval appears in several applied circumstances, ranging from crystallography to quantum mechanics.
    
    In this article, we construct various subspaces doing stable phase retrieval, and make connections with $\Lambda(p)$-set theory. Moreover, we set the foundations for an analysis of stable phase retrieval in general function spaces. This, in particular, allows us to show that H\"older stable phase retrieval implies stable phase retrieval,  improving the stability bounds in a recent article of M.~Christ and the third and fourth authors. We also characterize those compact Hausdorff spaces $K$ such that $C(K)$ contains an infinite dimensional SPR subspace.
\end{abstract}

\maketitle
\allowdisplaybreaks
\tableofcontents

\setlength\parindent{0pt}
\section{Introduction}

There are many situations in mathematics, science, and engineering where the goal is to recover some vector $f$ from $|Tf|$, where $T$ is a linear transformation into a function space.  Note that if $|\lambda|=1$ then it is impossible to distinguish $f$ and $\lambda f$ in this way.  The linear transformation $T$ is said to do  phase retrieval if this ambiguity is the only obstruction to recovering $f$.  That is, 
given a vector space $H$ and function space $X$, a linear operator $T:H\rightarrow X$ does {\em phase retrieval} if whenever $f,g\in H$ satisfy $|T f|=|Tg|$ then $f=\lambda g$ for some scalar $\lambda$ with $|\lambda|=1$.  Phase retrieval naturally arises in situations where one is only able to obtain the magnitude of linear measurements, and not the phase.
Notable examples in physics and engineering which require phase retrieval include X-ray crystallography, electron microscopy, quantum state tomography, and cepstrum analysis in speech recognition. The study of phase retrieval in mathematical physics dates back to at least 1933 when in his seminal work \emph{Die allgemeinen Prinzipien der Wellenmechanik} \cite{pauli1933allgemeinen} W.~Pauli asked whether a wave function is uniquely determined by the probability densities of position and momentum.  In other words, Pauli asked whether $|f|$ and $|\widehat{f}|$ determine $f\in L_2(\mathbb{R})$ up to multiplication by a unimodular scalar. The mathematics of phase retrieval has since grown to be an important and well-studied topic in applied harmonic analysis.\\ 


As any application of phase retrieval would involve error, it is of fundamental importance that the recovery of $f$ from $|Tf|$ not only be possible, but also be stable.  We say that $T$ does {\em stable phase retrieval } if the recovery (up to a unimodular scalar) of $f$ from $|Tf|$ is Lipschitz.  If $X$ is finite dimensional, then $T$ does phase retrieval if and only if it does stable phase retrieval \cite{balan2013,casazza2013}.  However, if $X$ is infinite dimensional and $T$ is the analysis operator of a frame or a continuous frame, then $T$ cannot do stable phase retrieval \cite{MR3656501,MR3554699}.  Here, a collection of vectors $(\psi_t)_{t\in\Omega}\subseteq H$ is a {\em continuous frame} of a Hilbert space $H$ over a measure space $(\Omega,\Sigma, \mu)$ if the map $f\mapsto (\langle f,\psi_t\rangle )_{t\in\Omega}$ is an embedding of $H$ into $L_2(\mu)$.  One of the main goals of this paper is to use the theory of subspaces of Banach lattices to present a unifying framework for stable phase retrieval which encompasses the previously studied cases and allows for stable phase retrieval in infinite dimensions.\\

Let $X=L_p(\mu)$, or, more generally, a Banach lattice. Let $E\subseteq X$ be a  subspace.  We say that $E$ does {\em phase retrieval} as a subspace of $X$ if whenever $|f|=|g|$ for some $f,g\in E$ we have that $f=\lambda g$ for some scalar $\lambda$ with $|\lambda|=1$.  Given a constant $C>0$, we say that  $E$ does {\em $C$-stable phase retrieval} as a subspace of $X$  if
\begin{equation}\label{0.1} 
    \inf_{|\lambda|=1}\|f-\lambda g\|\leq C\big\||f|-|g|\big\|\hspace{1cm}\textrm{ for all } f,g\in E.
\end{equation}
We may define an equivalence relation $\sim$ on $E$ by $f\sim g$ if $f=\lambda g$ for some scalar $\lambda$ with $|\lambda|=1$.  Then, $E$ does  phase retrieval as a subspace of $X$ if and only if the map $f\mapsto |f|$ from $E/\!\sim$ to $X$ is injective.  Furthermore, $E$ does $C$-stable phase retrieval as a subspace of $X$ if and only if the map $f\mapsto |f|$ from $E/\!\sim$ to $X$ is injective and the inverse is $C$-Lipschitz.  By introducing stable phase retrieval into the setting of Banach lattices, we are able to apply established methods from the subject to attack problems in phase retrieval, and conversely we hope that the ideas and questions in phase retrieval will inspire a new avenue of research in the theory of Banach lattices.  Before starting the meat of the paper, we present some additional motivation, give an outline of our major results, and state some of the important ideas and theorems from Banach lattices which we will be applying.  We conclude the paper by listing many open questions concerning stable phase retrieval in this new setting.
\\

\subsection{Motivation and applications}\label{MOT}
The inequality \eqref{0.1} arises in various circumstances. For instance, in crystallography and optics, one seeks to recover an unknown function $F\in L_2(\mathbb{R}^d)$ from the absolute value of its Fourier transform $\widehat{F}$. If one also seeks stability, this translates into an inequality of the form
 \begin{equation}\label{0.2}
       \inf_{|\lambda|=1} \|F-\lambda G\|_{L_2}\leq C\||\widehat{F}|-|\widehat{G}|\|_{L_2},
    \end{equation}
    which one would want to be valid for $F,G$ in a subspace $E\subseteq L_2(\mathbb{R}^d)$ which incorporates the additional constraints $F,G$ are known to satisfy. Using Plancherel's theorem to write $\|F-\lambda G\|_{L_2}=\|\widehat{F}-\lambda \widehat{G}\|_{L_2}$, one sees that the inequality \eqref{0.2} reduces to \eqref{0.1}, up to passing to Fourier space and making the change of notation $f=\widehat{F}$ and $g=\widehat{G}$. We refer the reader to the surveys \cite{MR4094471,MR3674475} and references therein for a further explanation of the importance  of phase retrieval in optics, crystallography, and other areas. In particular, these articles explains why, in practice, physical experiments are often able to measure the magnitude of the Fourier transform, but are unable to measure the phase. 
\\

A second scenario where phase retrieval appears is quantum mechanics. In this case, one wants to identify situations where $|f|$ and $|\widehat{f}|$ determine $f\in L_2(\mathbb{R})$ uniquely. As already mentioned, Pauli asked whether this could true for all $f\in L_2(\mathbb{R})$. However, a counterexample to this conjecture was given in 1944: There exists $f,g\in L_2(\mathbb{R})$ such that $|f|=|g|$ and $|\widehat{f}|=|\widehat{g}|$  but $f$ is not a multiple of $g$. This leads to the natural question of whether one can build ``large" subspaces $G\subseteq L_2(\mathbb{R})$ for which $|f|$ and $|\widehat{f}|$ determine $f\in G\subseteq L_2(\mathbb{R})$ uniquely. By passing to the phase space $L_2(\mathbb{R})\times L_2(\mathbb{R})$, we see that $G$ has the above property if and only if $E:=\{(f,\widehat{f}) : f\in G\}$ does phase retrieval as a subspace of $L_2(\mathbb{R})\times L_2(\mathbb{R})$, i.e., knowing $h,k\in E$ and $|h|=|k|$ implies $h$ is a unimodular multiple of $k$. This also naturally leads to the question of stability of Pauli phase retrieval, by requiring \eqref{0.1} hold on $E$. In this case, using Plancherel's theorem to return to $G$, \eqref{0.1} on $E$ translates into the inequality
\begin{equation}\label{0.3}
     \inf_{|\lambda|=1} \|f-\lambda g\|_{L_2}\leq C\left(\||f|-|g|\|_{L_2}+\||\widehat{f}|-|\widehat{g}|\|_{L_2}\right) \ \text{for}\ f,g\in G.
\end{equation} 
For a non-exhaustive collection of results on Pauli phase retrieval and its generalizations, see \cite{MR3552875,MR4094471,MR1700086,JaRa} and references therein. To our knowledge, the question of stability in the Pauli Problem is essentially unexplored. However, the results presented here in conjunction with \cite{christ2022examples} give a relatively large class of subspaces of $L_2(\mathbb{R}^d)$ satisfying \eqref{0.3}.
\\

Finally, we mention that phase retrieval has grown to become an exciting and important topic of research in frame theory \cite{balan2013,balan2021lipschitz,MR3202304,casazza2017weak,casazza2017norm,DB,MR4094471}. A frame for a separable Hilbert space $H$ is a sequence of vectors $(\phi_j)_{j\in J}$ in $H$ such that there exists uniform bounds $A,B>0$ so that
\begin{equation}\label{E:frame inequality}
    A\|f\|^2\leq \sum_{j\in J}|\langle f,\phi_j\rangle|^2\leq B\|f\|^2\hspace{1cm}\textrm{ for all }f\in H.
\end{equation}
The {\em analysis operator} of a frame $(\phi_j)_{j\in J}$ of $H$ is the map $\Theta:H\rightarrow\ell_2(J)$ given by $\Theta(f)=(\langle f,\phi_j\rangle)_{j\in J}$.  Note that the uniform upper bound $B$ in the frame inequality \eqref{E:frame inequality} guarantees that $\Theta:H\rightarrow \ell_2(J)$ is bounded, and the uniform lower bound $A$ gives that $\Theta$ is an embedding of $H$ into $\ell_2(J)$.  Given a frame $(\phi_j)_{j\in J}$ of $H$, the {\em canonical dual frame} $(\widetilde{\phi}_j)_{j\in J}$ is defined by $\widetilde{\phi}_j=(\Theta^*\Theta)^{-1} \phi_j$ for all $j\in J$ and satisfies
\begin{equation}\label{E:frame recon}
    f= \sum_{j\in J}\langle f, \widetilde{\phi}_j\rangle \phi_j=\sum_{j\in J}\langle f,\phi_j\rangle \widetilde{\phi}_j\hspace{1.5cm}\textrm{ for all }f\in H.
\end{equation}
Frames have many applications and play a fundamental role in signal processing and applied harmonic analysis.  One important reason for this is that the analysis operator $\Theta$ is an embedding of $H$ into $\ell_2(J)$, which allows for the application of filters, thresholding, and other signal processing techniques.  Another reason is that \eqref{E:frame recon} gives a linear, stable, and unconditional reconstruction formula for a vector in terms of the frame coefficients. 
\\

A frame $(\phi_j)_{j\in J}$ is said to do \emph{phase retrieval} if whenever $f,g\in H$ and $(|\langle f,\phi_j\rangle|)_{j\in J}=(|\langle g, \phi_j\rangle |)_{j\in J}$, there exists a unimodular scalar $\lambda$ such that $f=\lambda g.$ A frame is said to do \emph{stable phase retrieval} if there exists a constant $C\geq 1$ such that for all $f,g\in H$,
\begin{equation}\label{0.4}
    \inf_{|\lambda|=1}\|f-\lambda g\|_H\leq C\||\Theta(f)|-|\Theta(g)|\|_{\ell_2(J)}.
\end{equation}
Using the fact that the analysis operator $\Theta:H\to \ell_2(J)$ is an embedding, we see that a frame does stable phase retrieval if and only if the subspace $\Theta(H)\subseteq \ell_2(J)$ does stable phase retrieval in the sense of \eqref{0.1}. In finite dimensions, phase retrieval for frames is automatically stable. However, in infinite dimensions, it is necessarily unstable. As we will see, this is due to the fact that the ambient Hilbert lattice $\ell_2(J)$ is atomic, whereas the construction of SPR subspaces from \cite{calderbank2022stable} is done in the non-atomic lattice $L_2(\mathbb{R})$. For further investigations on the instability of phase retrieval for frames - including generalizations to continuous frames and frames in Banach spaces - see \cite{MR3656501,MR3554699}. 
\\

As mentioned previously, phase retrieval problems  arise in applications when considering an operator $T:H\to X$, which embeds a Hilbert space $H$ into a function space $X$.  In particular, the inequality \eqref{0.2} arises by taking $T$ be the Fourier transform, and \eqref{0.4} arises by taking $T$ to be the analysis operator of a frame. Another  important choice for $T$ is the Gabor transform (see \cite{MR4162323,grohs2021stable} for recent advances in Gabor phase retrieval). As should now be evident, the question of stability for each of these phase retrieval problems can be translated into a special case of \eqref{0.1}, by taking $E:=T(H).$

\subsection{An overview of the results}\label{OVERview}
The  examples from \Cref{MOT} show that the inequality \eqref{0.1} unifies various phase retrieval problems. However, as mentioned previously, phase retrieval for frames is unstable in infinite dimensions, and  it was only recently that the first examples of infinite dimensional SPR subspaces of real $L_2(\mu)$ spaces were constructed \cite{calderbank2022stable}.  The purpose of this article is twofold. First, we construct numerous examples of subspaces of $L_p(\mu)$ doing stable phase retrieval. For this, we use various isometric Banach space techniques, modifications of the  ``almost disjointness" methods in classical Banach lattice theory, random constructions, and analogues of some constructions from harmonic analysis. Secondly, we prove several structural results about SPR subspaces of $L_p(\mu)$, and even general Banach lattices. Notably, both the characterization of real SPR in terms of almost disjoint pairs (\Cref{Thm1}), as well as the equivalence of SPR and its H\"older analogue (\Cref{HSPR==SPR}) hold for all Banach lattices. Our results also extend those in the recent article \cite{christ2022examples}, which uses orthogonality and  combinatorial arguments akin to  Rudin's work \cite{MR0116177} on $\Lambda(p)$-sets to produce examples of subspaces of (real or complex) $L_p(\mu)$ doing H\"older stable phase retrieval. 
\\

We now briefly overview the paper. In \Cref{prelim}, we recall some basic terminology and results from Banach lattice theory in order to make the paper accessible to a wider audience. Most notably, in \Cref{KPD} we collect basic facts related to the Kadec-Pelczynski dichotomy. Such results give structural information about closed subspaces of Banach lattices that are \emph{dispersed}, i.e., that do not contain normalized almost disjoint sequences. As we will show in \Cref{Thm1}, a subspace of a (real) Banach lattice does stable phase retrieval if and only if it does not contain normalized almost disjoint \emph{pairs}. In \Cref{summary}, we collect various facts about dispersed subspaces; finding SPR analogues of these results will occupy much of the paper. In particular, although SPR is much stronger than being  dispersed, in \Cref{Further subspace doing SPR} we will show that every closed infinite dimensional dispersed subspace of an order continuous Banach lattice contains a further closed infinite dimensional subspace doing SPR.  The preliminary section finishes with \Cref{CBL}, which recalls basic facts about complex Banach lattices.
\\

\Cref{GT} collects various results on stable phase retrieval that hold for general Banach lattices. In particular, in \Cref{ADPAS} we make the aforementioned connection between stable phase retrieval and almost disjoint pairs (see \Cref{Thm1}). In \Cref{HSPOV}, we show that if the phase recovery map is H\"older continuous on the ball, then it is Lipschitz continuous on the whole space (\Cref{HSPR==SPR}). This follows from \Cref{thm hspr}, which shows that failure of stable phase retrieval can be witnessed by ``well-separated" vectors. The equivalence between stable phase retrieval and H\"older stable phase retrieval allows us to improve some results from \cite{christ2022examples}, yielding the first examples of infinite dimensional closed subspaces of complex $L_2(\mu)$ doing stable phase retrieval. 
\\

In \Cref{SPR EXAMPLES}, we build infinite dimensional SPR subspaces using a variety of different techniques. In particular, we  prove in \Cref{cor clark} an analogue of statement (iii) of \Cref{summary}; namely,  that for every dispersed subspace $E\subseteq L_p[0,1]$ ($1\leq p\leq \infty)$, we can build a closed subspace $E'\subseteq L_p[0,1]$ isomorphic to $E$, and doing stable phase retrieval. Moreover, for $p<2$ and  $q\in (p,2]$, we will show that any closed subspace of $L_p(\mu)$  isometric to $L_q(\mu)$  does SPR in $L_p(\mu)$, see \Cref{rem-clark}. Regarding sequence spaces, in \Cref{ss:SPR embeddings} we show that $\ell_\infty$ embeds into itself in an SPR way, while no infinite dimensional subspace of $\ell_p$ does SPR when $1\leq p<\infty.$ \Cref{s:intro}  constructs SPR subspaces of r.i.~spaces using random variables. This, in particular, tells us that  subspaces spanned by iid Gaussian and $q$-stable random variables will do SPR in a variety of spaces, including all $L_p$-spaces in which they can be found.  Finally, \Cref{Stable SPR} provides some basic stability properties of SPR subspaces.
\\

\Cref{SPR in LP} contains a study of the structure of SPR subspaces of $L_p(\mu)$, for a finite measure $\mu$. We begin with the aforementioned \Cref{Further subspace doing SPR}, which is  applicable for general order continuous Banach lattices, but for which much of the proof occurs in $L_1(\mu)$. Indeed, the generalization to order continuous Banach lattices follows from the result in $L_1(\mu)$ by arguing via the Kadec-Pelczynski dichotomy.
\\

Note that from the classical results in \Cref{summary} (a)-(d) it follows that if $E$ is dispersed in $L_p(\mu)$ and $1\leq q<p<\infty$, then $E$ may be viewed as a closed subspace of $L_q(\mu)$, and it is dispersed in $L_q(\mu)$. In \Cref{p>2 not down} we show that if $2\leq p<\infty$, there are closed subspaces $E\subseteq L_p(\mu)$ which do SPR (and hence are dispersed in $L_q(\mu)$ for all $1\leq q\leq p$), but fail to do SPR when viewed as a closed subspace of $L_q(\mu)$ for all $1\leq q<p.$ However, by \Cref{p is not always 2}, if $p<2$, then any SPR subspace $E\subseteq L_p(\mu)$ also does SPR when viewed as a closed subspace of $L_q(\mu)$ for any $1\leq q\leq p$. Whether there is an SPR analogue of statement (v) of \Cref{summary} remains an open problem.
\\

\Cref{s:SPR CK} is devoted to the study of infinite dimensional SPR subspaces of $C(K)$. The main result is \Cref{t:C(K) SPR} which states that for a compact Hausdorff space $K$, the space $C(K)$ of continuous functions over $K$ admits a (closed) infinite dimensional  SPR subspace if and only if the Cantor-Bendixson derivative $K'$ of $K$ is infinite. The paper finishes with \Cref{OP}, which discusses various avenues for further research. 

\section{Preliminaries}\label{prelim}
As many of our results hold in the generality of Banach lattices, we briefly summarize some of the standard notations and conventions from this theory. For the most part, our conventions align with the references \cite{AB,MR540367}. Moreover, the statements of our results require minimal knowledge of Banach lattices to understand; it is simply the proofs that use the technology and terminology from this theory.  Unless otherwise mentioned,  all $L_p$-spaces, $C(K)$-spaces and Banach lattices are  real. When a result is applicable for complex scalars, we will explicitly state this. The word ``subspace" is to be interpreted in the vector space sense. If a result requires the subspace to be closed or (in)finite dimensional, we will state this.
\\

Recall that a \emph{vector lattice} is a vector space, equipped with a compatible lattice-ordering (see \cite{AB} for a precise definition). For a vector lattice $X$, the positive cone of $X$ is denoted by $X_+:=\{f\in X : f\geq 0\}.$ The infimum of $f,g\in X$ is denoted by  $f\wedge g$, and the supremum is denoted by $f\vee g$. The modulus of $f$ is defined as $|f|:=f\vee (-f)$, and elements $f,g\in X$ are  said to be \emph{disjoint} if $|f|\wedge |g|=0$. A  \emph{weak unit} is an element $e\in X_+$ for which $|f|\wedge e=0$ implies $f=0$. For a net $(f_\alpha)$ in $X$, the notation $f_\alpha\downarrow 0$ means that $f_\alpha$ is decreasing and has infimum $0$.  A subspace $E\subseteq X$ is a \emph{sublattice} if it is closed under finite lattice operations; it is an \emph{ideal} if $f\in E$ and $|g|\leq |f|$ implies $g\in E$.
\\

A \emph{Banach lattice} is a Banach space which is also a vector lattice, and for which one has the compatibility condition $\|f\|\leq \|g\|$ whenever $|f|\leq |g|.$ Note that the SPR inequality \eqref{0.1} remains well-defined when $L_p(\mu)$ is replaced by an arbitrary Banach lattice. As we will see, several  of our results on SPR are also valid in this level of generality. Common examples of Banach lattices include $L_p$-spaces, $C(K)$-spaces, Orlicz spaces, and various sequence spaces. In this case, the ordering is pointwise, i.e.,  $f\leq g$  means $f(t)\leq g(t)$ for all (or almost all in the case of measurable functions) $t$ in the domain of $f$ and $g$. 
\\

A Banach lattice $X$ is \emph{order continuous} if for each net $(f_\alpha)$ satisfying $f_\alpha\downarrow 0$ we have $f_\alpha\xrightarrow{\|\cdot\|_X}0.$ $L_p$-spaces are order continuous for $1\leq p<\infty$, but $C(K)$-spaces are not (unless they are finite dimensional). To transfer results from $L_1(\mu)$ to more general Banach lattices, we will make use of the \emph{AL-representation} procedure. For this, let $X$ be an order continuous Banach lattice with a weak unit $e$. It is known that $X$ can be represented as an order and norm dense ideal in $L_1(\mu)$ for some finite measure $\mu$. That is, there is a vector lattice isomorphism $T:X\to L_1(\mu)$ such that Range$T$ is an order and norm dense ideal in $L_1(\mu)$. Note that $T$ need not be a norm isomorphism, though $T$ may be chosen to be continuous with $Te=\one$. Moreover, Range$T$ contains $L_\infty(\mu)$ as a norm and order dense ideal. It is common to identify $X$ with Range$T$ and  view $X$ as an ideal of $L_1(\mu)$. Such an inclusion of $X$ into $L_1(\mu)$ is called an \emph{AL-representation} of $X$. We refer to \cite[Theorem 1.b.14]{MR540367} or \cite[Section 4]{MR3666441} for details on AL-representations.

\subsection{The Kadec-Pelczynski dichotomy}\label{KPD}
Here, we briefly recap the literature on subspaces which do not contain almost disjoint normalized sequences. Recall that a sequence $(x_n)$ in a Banach lattice $X$ is said to be a \emph{normalized almost disjoint sequence} if $\|x_n\|_X=1$ for all $n$, and there exists a disjoint sequence $(d_n)$ in $X$ such that $\|x_n-d_n\|_X\to 0$. Following \cite{MR4301527,GMM,Disjointquant}, a closed subspace of a Banach lattice that fails to contain  normalized almost disjoint sequences will be called  \emph{dispersed}.   The classical \emph{Kadec-Pelczynski dichotomy} (c.f.~\cite[Proposition 1.c.8]{MR540367})  states that for a subspace $E$ of an order continuous Banach lattice $X$ with weak unit,  either
\begin{enumerate}
    \item $E$ fails to be dispersed, i.e., $E$ contains an almost disjoint normalized sequence,  or,
    \item $E$ is isomorphic to a closed subspace of $L_1(\Omega,\Sigma,\mu).$
\end{enumerate}
As we will see in \Cref{Thm1}, for real scalars, a subspace does stable phase retrieval if and only if it does not contain normalized almost disjoint \emph{pairs}. Hence, the Kadec-Pelczynski dichotomy will provide a tool to analyze such subspaces.
\\

In $L_p(\mu)$ for $1\leq p<\infty$ and a probability measure $\mu$, the Kadec-Pelczynski dichotomy can be improved. Indeed, we summarize the literature in the following theorem. 
\begin{theorem}\label{summary}
Let $1\leq p<\infty$ and $\mu$ be a probability measure. For a closed subspace $E$ of $L_p(\mu)$, the following are equivalent:
\begin{enumerate}[(a)]
\item $E$ is dispersed, i.e., $E$ contains no almost disjoint normalized sequences;
    \item There exists $0<q<p$ such that $\|\cdot\|_{L_p}\sim \|\cdot\|_{L_q}$ on $E$;
    \item For all $0<q<p$, $\|\cdot\|_{L_p}\sim \|\cdot\|_{L_q}$ on $E$;
    \item $E$ is strongly embedded in $L_p(\mu)$, i.e.,  convergence in measure coincides with norm convergence on $E$.
\end{enumerate}
Moreover,
\begin{enumerate}
    \item For $p\neq 2$, a closed subspace of $L_p[0,1]$ is dispersed if and only if it contains no subspace  isomorphic to $\ell_p$. 
    \item For $p>2$, a closed subspace of $L_p[0,1]$ is dispersed if and only if it is isomorphic to a Hilbert space.
    \item For $p<2$ and any $q\in (p,2]$, there is a closed subspace of $L_p[0,1]$ which is both dispersed and isometric to $L_q[0,1]$.
    \item For $p\neq 2$, $L_p[0,1]$ cannot be written as the direct sum of two dispersed subspaces.
    \item There exists an orthogonal decomposition $L_2[0,1]=E\oplus E^\perp$ with both $E$ and $E^\perp$ dispersed in $L_2[0,1]$. 
\end{enumerate}
\end{theorem}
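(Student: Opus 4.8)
The plan is to build an orthogonal decomposition $L_2[0,1]=E\oplus E^\perp$ in which both summands are dispersed, equivalently (by the equivalence (a)$\Leftrightarrow$(b)) in which the $L_2$ and $L_q$ norms are equivalent on each summand for some (hence, by (c), every) $q<2$. The natural source of such subspaces is the span of independent random variables with good integrability: by Khintchine-type inequalities, the closed span of a sequence of independent mean-zero random variables that is bounded in $L_2$ and unbounded away from $0$ carries equivalent $L_q$ and $L_2$ norms for all $q<2$. So the concrete goal is to find a sequence $(g_n)$ of independent Gaussians (or $\pm1$-valued Rademacher-type variables) on $[0,1]$ whose closed linear span $E$ has an orthogonal complement $E^\perp$ that is \emph{also} spanned by an independent family with the same property.

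First I would invoke a classical fact: $L_2[0,1]$ is isometric to $L_2(\Omega)$ for a product probability space $\Omega=\prod_{n} \Omega_n$ on which there live two independent i.i.d.\ Gaussian sequences $(g_n)$ and $(h_n)$ that are jointly independent (e.g.\ take a doubly-indexed i.i.d.\ Gaussian family $(\gamma_{n,k})_{n\in\N,k\in\{0,1\}}$, set $g_n=\gamma_{n,0}$, $h_n=\gamma_{n,1}$). The closed span of $\{g_n\}\cup\{h_n\}$ together with the constants need not be all of $L_2$, so I would instead use a \emph{complete} orthonormal system adapted to a single Gaussian sequence. Concretely: let $(g_n)_{n\geq 1}$ be i.i.d.\ standard Gaussians generating the whole $\sigma$-algebra; the Wiener chaos decomposition writes $L_2=\bigoplus_{m\geq 0}\mathcal{H}_m$ where $\mathcal{H}_1=\overline{\spn}\{g_n\}$ is the first chaos. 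Then set $E:=\mathcal{H}_1$ and $E^\perp:=\bigoplus_{m\neq 1}\mathcal{H}_m$. The subspace $E$ is isometric to $\ell_2$ via an orthonormal basis of jointly Gaussian variables, hence by Khintchine's inequality for Gaussians $\|\cdot\|_{L_q}\sim\|\cdot\|_{L_2}$ on $E$ for every $0<q<2$, so $E$ is dispersed by Theorem~\ref{summary}(b). The difficulty is $E^\perp$: higher Wiener chaoses are \emph{not} spanned by independent variables and a priori the lower-order $L_q$ norms could degenerate there.

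To handle $E^\perp$ I would avoid chaos entirely and instead split the \emph{index set}. Take independent i.i.d.\ Gaussians $(g_n)_{n\in A}$ and $(g_n)_{n\in B}$ with $A,B$ a partition of $\N$ into two infinite sets, chosen so that $\sigma\big((g_n)_{n\in A}\big)$ and $\sigma\big((g_n)_{n\in B}\big)$ together generate the full $\sigma$-algebra and $L_2[0,1]\cong L_2(\sigma_A)\otimes L_2(\sigma_B)$. This still does not directly give an orthogonal \emph{complement} that is a Gaussian span. The cleanest route, which I would ultimately take, is: realize $L_2[0,1]\cong L_2[0,1]\oplus_2 L_2[0,1]$ (since $[0,1]\cong[0,1/2]\sqcup[1/2,1]$ up to measure isomorphism), put an i.i.d.\ Gaussian sequence $(g_n)$ in the first copy whose span $E_1$ is dispersed, and put a second i.i.d.\ Gaussian sequence $(h_n)$ in the second copy whose span $E_2$ is dispersed, with $E_2$ chosen to contain the orthogonal complement of $E_1$ \emph{inside the first copy}. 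That is: inside the first copy $L_2[0,1/2]$, take $(g_n)$ to be a \emph{complete} orthonormal sequence of jointly Gaussian variables (possible: a Gaussian orthonormal basis of $L_2[0,1/2]$ exists, e.g.\ orthonormalize Hermite functions of a single Gaussian... no — better: transport an i.i.d.\ Gaussian sequence by a measure isomorphism so that its closed span is all of $L_2[0,1/2]$, which is impossible since an i.i.d.\ sequence spans only the first chaos).

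Given the above genuine obstruction, the correct final approach is the \emph{doubling trick with chaos absorption}: work on a product space carrying i.i.d.\ Gaussians $(g_n)$ and an independent i.i.d.\ Gaussian \emph{family} $(\gamma_{n,k})$ rich enough that $E^\perp$ — the orthocomplement of $\overline{\spn}\{g_n\}$ — is itself isometric to a Gaussian span. Concretely, $E^\perp$ is separable infinite-dimensional Hilbert, so it is \emph{linearly isometric} to $\overline{\spn}\{\gamma_n\}$ for any i.i.d.\ Gaussian sequence $(\gamma_n)$; but we need this isometry to be compatible with the $L_q$ structure, i.e.\ we need an \emph{ambient} measure isomorphism of $[0,1]$ carrying $E^\perp$ onto a Gaussian span. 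This is exactly where I expect to spend the real effort, and the key lemma I would prove is: \emph{there is a measure-preserving isomorphism $\Phi$ of $([0,1],dx)$ such that $\Phi$ carries the first Wiener chaos $\mathcal{H}_1$ onto itself and $\bigoplus_{m\geq 2}\mathcal{H}_m$ onto the closed span of an i.i.d.\ Gaussian sequence.} I would establish this by constructing on a copy of $[0,1]$ two jointly independent i.i.d.\ Gaussian sequences $(g_n)$, $(h_n)$ generating the full $\sigma$-algebra, observing that $\overline{\spn}\{g_n,h_n\}\subsetneq L_2$ still leaves out higher mixed chaos — so this too fails — and therefore, finally, I would reduce to an abstract statement: in $L_2(\mu)$ for $\mu$ non-atomic, any closed subspace on which $\|\cdot\|_2\sim\|\cdot\|_q$ holds has a complement with the same property, proved by a single clean ``absorb the chaos into an independent Gaussian copy'' argument. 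The main obstacle, then, is precisely producing a dispersed subspace whose orthocomplement is dispersed; everything else (Khintchine, equivalence of characterizations from Theorem~\ref{summary}(a)-(d)) is standard, and I would organize the write-up as: (1) Gaussian Khintchine gives dispersed Gaussian spans; (2) a measure-isomorphism lemma realizing any separable $L_2$ as an orthogonal sum of two Gaussian-type spans; (3) conclude via (a)$\Leftrightarrow$(b).
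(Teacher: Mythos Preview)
The paper does not reprove (v); it simply cites \cite[Propositions~3.4 and~3.5]{GMM} for this and the other ``moreover'' clauses. Your proposal, by contrast, attempts a direct construction, and there is a genuine gap.

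Each concrete construction you try---first Wiener chaos, split index sets, $L_2\oplus_2 L_2$---you yourself correctly diagnose as failing. The trouble is that your fallback, the ``key lemma'' asserting a measure-preserving isomorphism $\Phi$ carrying $\bigoplus_{m\ge 2}\mathcal H_m$ onto the closed span of an i.i.d.\ Gaussian sequence, is not merely unproved: it is false. A measure-preserving isomorphism induces an isometry on every $L_p$, so it preserves the property ``dispersed''. Gaussian spans are dispersed; but $\bigoplus_{m\ge 2}\mathcal H_m$ is not. Indeed, set $f_t=\one_{\{g_1>t\}}$ and $p_t=\mathbb P(g_1>t)$; then $f_t/\|f_t\|_2$ has $L_1$-norm $p_t^{1/2}\to 0$, while its orthogonal projection onto $\mathcal H_0\oplus\mathcal H_1$ has $L_2$-norm $\big(p_t+\phi(t)^2/p_t\big)^{1/2}$, which tends to $0$ by Mill's ratio ($\phi(t)^2/p_t\sim t\phi(t)\to 0$). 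Hence the projection of $f_t/\|f_t\|_2$ onto $\bigoplus_{m\ge 2}\mathcal H_m$ is asymptotically the whole vector and inherits the vanishing $L_1$-norm. Your closing ``abstract statement'' (every dispersed subspace of $L_2(\mu)$ has a dispersed orthogonal complement) is likewise false: take $E$ one-dimensional; its complement has codimension one and contains normalized indicators of arbitrarily small sets.

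The construction underlying (v) is of a different nature and does not come from Gaussian spans at all: it is Kashin's decomposition, which for each $n$ splits $\R^{2n}$ orthogonally as $E_n\oplus E_n^\perp$ with the $\ell_1^{2n}$ and $\ell_2^{2n}$ norms uniformly equivalent on both halves (after probability normalization, exactly a uniformly dispersed orthogonal splitting of $L_2$ on $2n$ points). A gluing/limiting argument then passes to $L_2[0,1]$; see \cite{GMM} and the references given there. No arrangement of a fixed i.i.d.\ family will produce such a decomposition, for the reason in the previous paragraph: the complement of any Gaussian span always contains near-indicators of small sets.
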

\begin{proof}
The equivalence of (b), (c) and (d) is \cite[Proposition 6.4.5]{AK}. Other than the isometric portion of statement (iii), the rest of the statements are neatly summarized in \cite[Propositions 3.4 and 3.5]{GMM}, with references to various textbooks for proofs. An isometric embedding of $L_q[0,1]$ into $L_p[0,1]$ for $q\in (p,2)$ is given in \cite[Corollary 2.f.5]{MR540367}. An isometric embedding of $\ell_2$ into $L_p[0,1]$ for $1\leq p<\infty$ is given in \cite[Proposition 6.4.12]{AK}.
\end{proof}
\begin{remark}\label{Lambda p recap}
One of the goals of this article is to find SPR analogues of the results in \Cref{summary}. However, we should mention that the connection between \Cref{summary} and SPR has already been implicitly made in \cite{christ2022examples}. Recall that a subset $\Lambda \subseteq \mathbb{Z}$ is called a \emph{$\Lambda(p)$-set} if the closed subspace generated by the set of exponentials $\{e^{2\pi inx}: n\in \Lambda \}\subseteq L_p(\mathbb{T})$ satisfies the equivalent conditions in \Cref{summary}. Such sets have been deeply studied  \cite{Lambdap,MR1863693,Lphi}, and have many interesting properties. For example, Rudin \cite{MR0116177} showed that for all integers $n>1$,  there are $\Lambda(2n)$-sets that are not $\Lambda(q)$-sets for every $q>2n$. Moreover,  Bourgain \cite{MR989397} extended Rudin's theorem to all $p> 2$.  On the other hand, when $p<2$, and $\Lambda$ is $\Lambda(p)$, then it is automatically $\Lambda(p+\varepsilon)$ for some $\varepsilon>0$ (\cite{MR383005,MR964865}). 
Since $|e^{2\pi inx}|\equiv 1$, complex exponentials cannot do stable phase retrieval. However, by replacing $e^{2\pi inx}$ by  $\sin(2\pi n x)$ or other trigonometric polynomials with non-constant moduli,  \cite{christ2022examples} is able to use combinatorial arguments in the spirit of Rudin to produce SPR subpaces of $L_p(\mu)$ when the dilation set $\Lambda$ is sufficiently sparse. 
\end{remark}

\subsection{Complex Banach lattices}\label{CBL}
Complex Banach lattices are defined as complexifications of real Banach lattices, and in the case of complex function spaces like $C(K)$ and $L_p(\mu)$, agree with the standard definition. More precisely, by a \emph{complex Banach lattice} we mean the complexification $X_\mathbb{C}=X\oplus iX$ of a real Banach lattice, $X$, endowed with the norm $\|x+iy\|_{X_\mathbb{C}}=\| |x+iy|\|_X$, where the modulus $|\cdot|: X_\mathbb{C}\to X_+$ is the mapping given by
\begin{equation}
    |x+iy|=\sup_{\theta\in [0,2\pi]}\{x\cos\theta+y\sin\theta\}, \ \text{for} \ x+iy\in X_\mathbb{C}.
\end{equation}
We refer to \cite[Section 3.2]{MR1921782} and \cite[Section 2.11]{MR0423039} for a proof that the modulus function is well-defined, and behaves as expected.
\\

With the above definition, one can define complex sublattices, complex ideals, etc. However, we will not need this. We do, however, note that if $T:X\to Y$ is a real linear operator between real Banach lattices, then we may define the \emph{complexification} $T_\mathbb{C}:X_\mathbb{C}\to Y_\mathbb{C}$ of $T$ via $T_\mathbb{C}(x+iy)=Tx+iTy.$ The map $T_\mathbb{C}$ is $\mathbb{C}$-linear, bounded, and if $T$ is a lattice homomorphism then $T_{\mathbb{C}}$ preserves moduli, i.e., $T|z|=|T_\mathbb{C}z|$ for $z\in X_\mathbb{C}.$ When we work with complex Banach  lattices $X_\mathbb{C}$, we will use these facts to identify $X_\mathbb{C}$ as a space of measurable functions on some measure space, and then work pointwise. How to do this will be explained later in the paper.
\section{General theory}\label{GT}
In this section, we present several results on (stable) phase retrieval that are valid in general Banach lattices. We begin with the definitions:
\begin{definition}\label{d:PR}
Let $E$ be a subspace of a vector lattice $X$. We say that $E$ does \emph{phase retrieval} if for each $f,g\in E$ with $|f|=|g|$ there is a scalar $\lambda$ such that $f=\lambda g.$
\end{definition}
\begin{definition}\label{d:SPR}
Let $E$ be a subspace of a real or complex Banach lattice $X$. We say that $E$ does \emph{$C$-stable phase retrieval} if for each $f,g\in E$ we have 
\begin{equation}
    \inf_{|\lambda|=1}\|f-\lambda g\|\leq C\||f|-|g|\|.
\end{equation}
If $E$ does $C$-stable phase retrieval for some $C$, we simply say that $E$ does \emph{stable phase retrieval} (\emph{SPR} for short).
\end{definition}
Note that if a subspace $E$ of a real or complex Banach lattice $X$ does $C$-stable phase retrieval, then so does its closure.
\subsection{Connections with almost disjoint pairs and sequences}\label{ADPAS}
When considering whether a subspace $E\subseteq X$ does phase retrieval, there is one obvious obstruction. If $f,g\in E$ are non-zero disjoint vectors, then $|f-g|=|f+g|=|f|+|g|$, but $f-g$ cannot be a multiple of $f+g$.
Hence, if $E$ is to do phase retrieval, then it cannot contain disjoint pairs. Similarly, if $E$ is to do stable phase retrieval, then it cannot contain ``almost" disjoint pairs. As we will now see, in the real case, these are the \emph{only} obstructions to (stable) phase retrieval.

\begin{definition}
Let $E$ be a subspace of a real or complex Banach lattice $X$. We say that $E$ contains \emph{$\varepsilon$-almost disjoint pairs} if there are $f,g\in S_E$ such that $\||f|\wedge |g|\|< \varepsilon.$ If $E$ contains $\varepsilon$-almost disjoint pairs for all $\varepsilon>0$, we say that $E$ contains \emph{almost disjoint pairs}.
\end{definition}

\begin{theorem}\label{Thm1}
Let $E$ be a subspace of a  Banach lattice $X$, $C\geq 1$ and $\varepsilon>0$. Then,
\begin{enumerate}
\item If $E$ does $C$-stable phase retrieval, then it contains no $\frac{1}{C}$-almost disjoint pairs;
\item If $E$  contains no $\varepsilon$-almost disjoint pairs, then it does $\frac{2}{\varepsilon}$-stable phase retrieval.
\end{enumerate}
In particular, $E$ does stable phase retrieval if and only if it does not contain almost disjoint pairs.
\end{theorem}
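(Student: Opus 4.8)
The plan is to prove the two quantitative halves (i) and (ii) separately; the concluding ``if and only if'' is then immediate, since $C$-stable phase retrieval forbids $\tfrac{1}{C}$-almost disjoint pairs by (i), while the absence of $\varepsilon$-almost disjoint pairs yields $\tfrac{2}{\varepsilon}$-stable phase retrieval by (ii). The only external input I need is the pair of standard vector-lattice identities
\[
|a+b|\wedge|a-b| = \big|\,|a|-|b|\,\big|
\qquad\text{and}\qquad
|a+b|\vee|a-b| = |a|+|b|,
\]
valid for all $a,b$ in a real Banach lattice; subtracting them and using $x+y = x\vee y + x\wedge y$ gives $\big|\,|a+b|-|a-b|\,\big| = 2\,(|a|\wedge|b|)$. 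I also use that, for real scalars, $\inf_{|\lambda|=1}\|f-\lambda g\| = \min(\|f-g\|,\|f+g\|)$.

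For (i): suppose $E$ does $C$-stable phase retrieval and fix $f,g\in S_E$. Applying the defining inequality to the pair $F=f-g$, $G=f+g$, the left-hand side equals $\min(2\|g\|,2\|f\|) = 2$, while the right-hand side equals $C\,\big\|\,|f-g|-|f+g|\,\big\| = 2C\,\big\|\,|f|\wedge|g|\,\big\|$ by the identity $\big|\,|a+b|-|a-b|\,\big| = 2(|a|\wedge|b|)$ (used with $a=f$, $b=-g$). Therefore $\big\|\,|f|\wedge|g|\,\big\|\ge\tfrac{1}{C}$ for all $f,g\in S_E$, which is precisely the statement that $E$ contains no $\tfrac{1}{C}$-almost disjoint pairs.

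For (ii): the identity $|x+y|\wedge|x-y| = \big|\,|x|-|y|\,\big|$, applied with $x=f$, $y=-g$, says that $u:=f-g$ and $v:=f+g$ satisfy $|u|\wedge|v| = \big|\,|f|-|g|\,\big|$; since moreover $\inf_{|\lambda|=1}\|f-\lambda g\| = \min(\|u\|,\|v\|)$ and $(f,g)\mapsto(f-g,f+g)$ is a bijection of $E\times E$, ``$E$ does $C$-stable phase retrieval'' is equivalent to ``$\min(\|u\|,\|v\|)\le C\,\big\|\,|u|\wedge|v|\,\big\|$ for all $u,v\in E$''. Now assume $\big\|\,|x|\wedge|y|\,\big\|\ge\varepsilon$ for all $x,y\in S_E$, and fix $u,v\in E$; we may assume $u,v\neq 0$ and, by symmetry, $\|u\|\le\|v\|$. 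Putting $x:=u/\|u\|$, $y:=v/\|v\|\in S_E$, monotonicity of $\wedge$ together with $\|v\|\ge\|u\|$ gives $|u|\wedge|v| = (\|u\|\,|x|)\wedge(\|v\|\,|y|)\ge\|u\|\,(|x|\wedge|y|)$, hence $\big\|\,|u|\wedge|v|\,\big\|\ge\varepsilon\|u\| = \varepsilon\min(\|u\|,\|v\|)$. This yields $\tfrac{1}{\varepsilon}$-stable phase retrieval, in particular the asserted $\tfrac{2}{\varepsilon}$-bound.

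The one step that needs genuine care is the lattice bookkeeping: checking that $|a+b|\wedge|a-b| = \big|\,|a|-|b|\,\big|$ and $\big|\,|a+b|-|a-b|\,\big| = 2(|a|\wedge|b|)$ hold \emph{identically} in an abstract real Banach lattice --- not merely pointwise in a concrete function space --- and then keeping the signs straight under the substitution $f\leftrightarrow f\pm g$. Once those identities are in hand, each of (i) and (ii) is a two-line estimate, so this is essentially the only obstacle.
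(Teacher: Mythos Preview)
Your proof is correct. Part (i) is essentially identical to the paper's argument, hinging on the identity $\big|\,|a+b|-|a-b|\,\big|=2(|a|\wedge|b|)$ applied to $h_1=f+g$, $h_2=f-g$.

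Part (ii), however, is genuinely different from the paper's proof, and in fact cleaner. The paper embeds $X$ lattice-isometrically into a space of the form $\big(\bigoplus_i L_1(\Omega_i,\mu_i)\big)_\infty$, then works pointwise: given $f,g$ witnessing failure of $\tfrac{2}{\varepsilon}$-SPR, it splits each $\Omega_i$ into the set where $\mathrm{sign}(f_i)=\mathrm{sign}(g_i)$ and its complement, builds disjoint vectors $d_1,d_2$ from the restrictions, and estimates $\big\|\tfrac{|f-g|}{\|f-g\|}\wedge\tfrac{|f+g|}{\|f+g\|}\big\|$ via a triangle-inequality argument for $\wedge$. You instead exploit the companion identity $|a+b|\wedge|a-b|=\big|\,|a|-|b|\,\big|$ directly: this recasts $C$-SPR as the single inequality $\min(\|u\|,\|v\|)\le C\,\|\,|u|\wedge|v|\,\|$ for all $u,v\in E$, which follows immediately from the hypothesis by homogeneity and monotonicity of $\wedge$. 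Your route avoids the function-space representation entirely, stays within the abstract lattice, and yields the sharper constant $\tfrac{1}{\varepsilon}$ rather than $\tfrac{2}{\varepsilon}$. The paper's approach, on the other hand, makes the connection with pointwise sign structure explicit, which is conceptually useful elsewhere (e.g., in the complex discussion and in Theorem~\ref{thm hspr}); your argument trades that picture for brevity and a better bound.
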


\begin{proof}
(i)$\Rightarrow$(ii): Suppose that $E$ does $C$-stable phase retrieval, but there are $f,g\in E$ such that $\|f\|=\|g\|=1$ but $\||f|\wedge |g|\|< \frac{1}{C}$. Define $h_1=f+g$ and $h_2=f-g.$ Then since the identity
$$\left||f+g|-|f-g|\right|=2(|f|\wedge|g|)$$
holds in any vector lattice by \cite[Theorem 1.7]{AB}, we have
$$\||h_1|-|h_2|\|=2\||f|\wedge |g|\|< \frac{2}{C}.$$
On the other hand, $h_1+h_2=2f$ has norm $2$, and $h_1-h_2=2g$ also has norm $2$. This contradicts that $E$ does $C$-stable phase retrieval.
\\

(ii)$\Rightarrow$(i): A classical Banach lattice fact (see, e.g., \cite[Remark after Lemma 3.3]{AT}) is that every Banach lattice embeds lattice isometrically into some space of the form
$$\left(\bigoplus_{i\in I}L_1(\Omega_i,\Sigma_i,\mu_i)\right)_\infty.$$
Since  both stable phase retrieval and existence of almost disjoint pairs are invariant under passing to and from closed sublattices, we may assume without loss of generality that $X$ is of this form.
\\

Suppose $E$ does not do $\frac{2}{\varepsilon}$-stable phase retrieval. Find  $f=(f_i),g=(g_i)\in E$ such that $\|f-g\|,\|f+g\|>\frac{2}{\varepsilon}\||f|-|g|\|.$
For each $i\in I$ let 
$$I_i=\{t\in \Omega_i : \sign\left(f_i(t)\right)=\sign\left(g_i(t)\right), \  \text{or one of}\  f_i(t),g_i(t) \ \text{is zero}\}.$$
Then
$$I_i^c:=\Omega_i\setminus I_i=\{t\in \Omega_i : \sign\left(f_i(t)\right)=-\sign\left(g_i(t)\right)\}.$$
We compute that 
$$|f|-|g|=(|f_i|-|g_i|)_{i\in I}=(|{f_i}_{|I_i}|-|{g_i}_{|I_i}|)_{i\in I}+(|{f_i}_{|I_i^c}|-|{g_i}_{|I_i^c}|)_{i\in I}.$$
So, since the modulus is additive on disjoint vectors,
$$\big||f|-|g|\big|=\big(\big||{f_i}_{|I_i}|-|{g_i}_{|I_i}|\big|\big)_{i\in I}+\big(\big||{f_i}_{|I_i^c}|-|{g_i}_{|I_i^c}|\big|\big)_{i\in I}.$$
Now, by definition of $I_i$ we have 
$$\big(\big||{f_i}_{|I_i}|-|{g_i}_{|I_i}\big|\big|)_{i\in I}=\big(\big|{f_i}_{|I_i}-{g_i}_{|I_i}\big|\big)_{i\in I}$$
and 
$$\big(\big||{f_i}_{|I_i^c}|-|{g_i}_{|I_i^c}|\big|\big)_{i\in I}=\big(\big|{f_i}_{|I_i^c}+{g_i}_{|I_i^c}\big|\big)_{i\in I}.$$
Notice next that $d_1:=({f_i}_{|I_i^c}-{g_i}_{|I_i^c})_{i\in I}$ and $d_2:=({f_i}_{|I_i}+{g_i}_{|I_i})_{i\in I}$ are disjoint. Moreover,
\begin{align*}
\|f-g-({f_i}_{|I_i^c}-{g_i}_{|I_i^c})_{i\in I}\| & =\|({f_i}_{|I_i}-{g_i}_{|I_i})_{i\in I}\|=\big\|\big(\big||{f_i}_{|I_i}|-|{g_i}_{|I_i}|\big|\big)_{i\in I}\big\| \\ & \leq \||f|-|g|\|<\frac{\varepsilon}{2}\|f-g\|.
\end{align*}
Similarly,
\begin{align*}
  \|f+g-({f_i}_{|I_i}+{g_i}_{|I_i})_{i\in I}\| & =\|({f_i}_{|I_i^c}+{g_i}_{|I_i^c})_{i\in I}\| =\big\|\big(\big||{f_i}_{|I_i^c}|-|{g_i}_{|I_i^c}|\big|\big)_{i\in I}\big\| \\ & \leq \||f|-|g|\|<\frac{\varepsilon}{2}\|f+g\|.  
\end{align*}

By assumption, we have that both $f+g$ and $f-g$ are non-zero. Hence, by \cite[Lemma 1.4]{AB}, and the fact that $|d_1|\wedge|d_2|=0$ we have 

$$\frac{|f-g|}{\|f-g\|}\wedge \frac{|f+g|}{\|f+g\|}\leq \frac{|f-g-d_1|}{\|f-g\|}\wedge \frac{|f+g|}{\|f+g\|}+\frac{|d_1|}{\|f-g\|}\wedge\frac{|f+g|}{\|f+g\|}$$
$$\leq\frac{|f-g-d_1|}{\|f-g\|}\wedge \frac{|f+g|}{\|f+g\|}+\frac{|d_1|}{\|f-g\|}\wedge\frac{|f+g-d_2|}{\|f+g\|}.$$
It follows that 

$$\|\frac{|f-g|}{\|f-g\|}\wedge \frac{|f+g|}{\|f+g\|}\|\leq \frac{\|f-g-d_1\|}{\|f-g\|}+\frac{\|f+g-d_2\|}{\|f+g\|}<\varepsilon. $$



Thus, we have constructed normalized $\varepsilon$-almost disjoint vectors $\frac{f+g}{\|f+g\|}$ and $\frac{f-g}{\|f-g\|}$ in $E$.
%
\end{proof}

\begin{remark}\label{Rem 1 on SPR}
Implication (i)  of \Cref{Thm1} holds when the Banach lattice $X$ is replaced by any vector lattice equipped with an absolute norm. Here, a norm on a vector lattice $X$  is \emph{absolute} if $\| |f|\|=\|f\|$ for all $f\in X$; see  \cite{MR760617,kreuter2015sobolev,MR1063259} for more information. 
The  proof of \Cref{Thm1} also shows that a subspace of a Banach lattice does phase retrieval if and only if it does not contain disjoint non-zero vectors. A compactness argument then yields that in finite dimensions, phase retrieval implies stable phase retrieval. Indeed, consider the map $S_E\times S_E\to \mathbb{R}$, $(f,g)\mapsto \||f|\wedge |g|\|.$ Then this map is continuous, so its image is compact, which allows one to conclude that the existence of almost disjoint pairs implies the existence of a disjoint pair. In infinite dimensions, it is relatively easy to construct subspaces doing phase retrieval but failing stable phase retrieval.
\end{remark}

\begin{proposition}\label{PR not SPR}
Every infinite dimensional Banach lattice has a closed subspace which does phase retrieval but not stable phase retrieval.
\end{proposition}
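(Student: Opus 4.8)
By \Cref{Thm1}, a subspace does phase retrieval exactly when it contains no nonzero disjoint pair, and does stable phase retrieval exactly when it contains no almost disjoint pairs; moreover, by \Cref{Rem 1 on SPR}, a \emph{finite} dimensional subspace doing phase retrieval automatically does stable phase retrieval. Hence the task is to produce a \emph{closed, infinite dimensional} subspace $E\subseteq X$ which contains no nonzero disjoint pair but which contains, for every $\varepsilon>0$, a normalized pair $f,g\in S_E$ with $\||f|\wedge|g|\|<\varepsilon$.

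The guiding example is $X=L_p(\mathbb{T})$ (or $X=C(\mathbb{T})$) with $E=H^p(\mathbb{T})$ (resp.\ the disk algebra $A(\mathbb{D})$). On the one hand, the F.\ and M.\ Riesz theorem forbids a nonzero element of $E$ from vanishing on a subset of $\mathbb{T}$ of positive measure, so no two nonzero elements of $E$ are disjoint and $E$ does phase retrieval. On the other hand, the polynomials $h_N=(1+z)^N$ and $k_N=(1-z)^N$ belong to $E$, concentrate near antipodal arcs of $\mathbb{T}$, and satisfy $|h_N|\wedge|k_N|\le(\sqrt2)^N$ pointwise on $\mathbb{T}$ while $\|h_N\|=\|k_N\|$ grows like $2^N/\sqrt N$; feeding the pair $(h_N+k_N,\,h_N-k_N)$ into the definition of $C$-SPR and invoking the vector lattice identity $\bigl||h_1+h_2|-|h_1-h_2|\bigr|=2(|h_1|\wedge|h_2|)$ (as in the proof of \Cref{Thm1}) shows $E$ fails $C$-SPR for every $C$ once $N$ is large. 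The mechanism to reproduce is thus a closed subspace in which an ``analyticity-type'' rigidity rules out disjoint pairs, yet which still contains highly localized elements.

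For a general infinite dimensional Banach lattice $X$, I would first reduce to such a model. If $X$ has a nontrivial non-atomic part, one can (via a suitable band of an $L_1$-representation, noting $\|\cdot\|_{L_1}\lesssim\|\cdot\|_X$ in an AL-representation) place a copy of $L_\infty(\mathbb{T})$ inside $X$ and take $E$ to be the $\|\cdot\|_X$-closure of $A(\mathbb{D})$: since $\|\cdot\|_X$-convergence forces $L_1$-convergence, $E$ still lands inside $H^1$, so F.\ and M.\ Riesz still gives phase retrieval, and renormalized copies of $h_N,k_N$ still break SPR. It remains to handle the purely atomic situation, where one passes to the closed sublattice $Y$ generated by a normalized disjoint sequence (present in every infinite dimensional Banach lattice) and builds $E$ as the closed span of vectors whose \emph{finite} linear combinations all have cofinite support, e.g.\ $v^{(k)}\leftrightarrow(\rho_k^n)_n$ with $\rho_k\uparrow 1$, so that a ``dominant term'' estimate forbids disjoint pairs among finite combinations, while suitably normalized differences $v^{(k)}-v^{(\ell)}$ (whose concentration coordinates drift apart) supply the $\varepsilon$-almost disjoint pairs. \textbf{The main obstacle is the passage to the closure}: a priori $\overline E$ could acquire elements supported on thin coordinate sets that then pair up disjointly, so one must show the closed subspace still contains no disjoint pair. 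In the atomic model this reduces to recognizing $E$ as a model space $K_B$ (with $B$ the Blaschke product whose zeros are the $\rho_k$), or its analogue in $Y$, and proving that nonzero elements of $K_B$ have ``thick'' support by a pseudocontinuation/analyticity argument; this atomic case, where no literal analyticity is available in the ambient lattice itself, is the delicate point of the proof.
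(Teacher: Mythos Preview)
Your proposal has genuine gaps, and the paper's argument proceeds quite differently.

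First, the guiding example is flawed. The equivalence ``no disjoint nonzero pair $\Longleftrightarrow$ phase retrieval'' that you invoke from \Cref{Thm1} and \Cref{Rem 1 on SPR} holds only in the \emph{real} setting. But $H^p(\mathbb{T})$ and $A(\mathbb{D})$ are subspaces of \emph{complex} $L_p(\mathbb{T})$, and in the complex setting absence of disjoint pairs does not yield phase retrieval: the functions $f\equiv 1$ and $g(z)=z$ lie in $H^p$, satisfy $|f|=|g|\equiv 1$ on $\mathbb{T}$, yet $f\neq\lambda g$ for any constant $\lambda$. If instead you intend real $L_p(\mathbb{T};\mathbb{R})$, then $H^p$ is simply not a subspace of it, and real parts of $H^p$ functions can vanish on sets of positive measure (e.g.\ $\operatorname{Re}(i)\equiv 0$), so the F.\ and M.\ Riesz obstruction disappears. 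Second, the ``place $L_\infty(\mathbb{T})$ inside $X$ via an AL-representation'' step presupposes order continuity and a weak unit, neither of which is assumed in the Proposition. Third, you yourself flag the atomic case as incomplete; the vectors $(\rho_k^n)_n$ need not even lie in a general atomic Banach lattice, and the closure issue is not resolved.

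The paper sidesteps all of this with a single uniform construction valid in any infinite dimensional Banach lattice. One takes a normalized positive disjoint family indexed as $(u_i)_{i\in\mathbb{N}}\cup(v_{\{i,j\}})_{i\neq j}$ and sets
\[
f_i \;=\; u_i + \sum_{j\neq i} 2^{-4\phi(i,j)}\, v_{\{i,j\}},
\]
with $\phi:\mathbb{N}^2\to\mathbb{N}$ an injection. Since $\sum_i\|f_i-u_i\|$ is small, $(f_i)$ is a Schauder basic sequence, so every element of $E=\overline{\mathrm{span}}\{f_i\}$ has a \emph{unique} expansion $\sum a_i f_i$; this is precisely what handles the closure problem you worried about, with no analyticity required. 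Failure of SPR is immediate from $f_i\wedge f_j=2^{-4\max(\phi(i,j),\phi(j,i))}v_{\{i,j\}}\to 0$. Phase retrieval follows by reading coefficients: $|f|=|g|$ forces $|a_i|=|b_i|$ (via the $u_i$ coordinates), and then the $v_{\{1,i\}}$ coordinates pin down the signs, because $\bigl|2^{-4\phi(1,i)}a_1+2^{-4\phi(i,1)}a_i\bigr|=\bigl|2^{-4\phi(1,i)}b_1+2^{-4\phi(i,1)}b_i\bigr|$ together with $a_1=b_1>0$ and $|a_i|=|b_i|$ forces $a_i=b_i$. The ``linking'' atoms $v_{\{i,j\}}$ thus play the role you hoped analyticity would play, but in a purely combinatorial way that works in every Banach lattice.
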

\begin{proof}
By \cite[p.~46, Exercise 13]{MR2011364}, any infinite dimensional Banach lattice $X$ contains a normalized disjoint positive sequence, which we shall index as consisting of vectors $(u_i)_{i \in \N}$ and $(v_s)_{s \in \is}$; here, $\is$ denotes the set of all two-element subsets of $\N$ (the order is not important). We fix an injection $\phi : \N^2 \to \N$ and consider the vectors
$$
f_i = u_i + \sum_{j \neq i} 2^{-4\phi(i,j)} v_{\{i,j\}} .
$$
The sum above converges, and we have
$$
\|u_i - f_i\| \leq \vp_i , \, {\textrm{  where  }} \vp_i = \sum_j 2^{-4\phi(i,j)} .
$$
Then $\sum_i \vp_i = \sum_{i,j} 2^{-4\phi(i,j)} \leq \sum_m 2^{-4m} = 1/15$, hence, by \cite[Theorem 1.3.9]{AK}, $(f_i)$ is a Schauder basic sequence. Also, $1 \leq \|f_i\| \leq 16/15$ for each $i$, so this basis is semi-normalized.
We shall show that $E = \overline{\spn}[f_i : i \in \N]$ fails stable phase retrieval, but has phase retrieval.
\\

To show the failure of SPR, let, for $i \neq j$, $\psi(i,j) = \max\{\phi(i,j), \phi(j,i)\}$. Clearly $\psi(i,j) = \psi(j,i)$, and $\lim_j \psi(i,j) = \infty$ for any $i$. Note that  $f_i \wedge f_j =  2^{-4\psi(i,j)} v_{\{i,j\}}$, hence
$$
\| f_i \wedge f_j \| = 2^{-4\psi(i,j)} \underset{i,j \to \infty}{\longrightarrow} 0 .
$$

Next we show that $E$ does phase retrieval. Pick non-zero $f, g \in E$, with $|f| = |g|$; we have to show that $f = \pm g$. To this end, write $f = \sum_i a_i f_i$ and $g = \sum_i b_i f_i$. We can expand
$$
f = \sum_i a_i u_i + \sum_{\{i,j\} \in \is} \big( a_i 2^{-4 \phi(i,j)} + a_j 2^{-4\phi(j,i)} \big) v_{\{i,j\}} ,
$$
and likewise for $g$. Comparing the coefficients with $u_i$, we conclude that, for every $i$, $|a_i| = |b_i|$. By switching signs in front of $f$ and $g$, and by re-indexing, we can assume that $a_1 = b_1 > 0$. We have to show that the equality $a_i = b_i$ holds for every $i > 1$.
\\

The preceding reasoning shows that $a_i = 0$ iff $b_i = 0$. Suppose both $a_i$ and $b_i$ are different from $0$. Comparing the coefficients with $v_{\{i,j\}}$, we see that
$$
\big| 2^{-4\phi(1,i)} a_1 + 2^{-4\phi(i,1)} a_i \big| = \big| 2^{-4\phi(1,i)} b_1 + 2^{-4\phi(i,1)} b_i \big| ,
$$
which is only possible if $\sign \, a_i = \sign \, b_i$.
\end{proof}




\begin{example}
\Cref{Thm1} fails for complex spaces. Indeed, define $E$ as the complex span of $\{(1,1,1),(i,1,-1)\}\subseteq \mathbb{C}^3$, where we equip $\mathbb{C}^3$ with the modulus $|(a,b,c)|:=(|a|,|b|,|c|)$. Clearly, $E$ contains vectors $f,g$ with $|f|=|g|$  but such that $f-\lambda g$ is not zero for any $\lambda\in \mathbb{C}$. Hence, $E$ fails phase retrieval. However, one can easily compute that $E$ contains no disjoint vectors, which by compactness yields the non-existence of almost disjoint vectors. Moreover, as observed in \cite{christ2022examples}, a complex subspace that contains two linearly independent real vectors cannot do complex phase retrieval. In particular, if $E\subseteq X$ is  subspace of a Banach lattice $X$ with $\dim E\geq 2$, then the canonical subspace $E_\mathbb{C}\subseteq X_\mathbb{C}$ fails to do phase retrieval.
\end{example}

\begin{remark}
\Cref{Thm1} shows that for real scalars, the study of subspaces doing stable phase retrieval is  equivalent to the study of subspaces lacking almost disjoint pairs. As mentioned in \Cref{KPD}, there is a vast literature on closed subspaces lacking almost disjoint normalized \emph{sequences}. Clearly, if $E$ contains an almost disjoint normalized sequence, then it fails to do stable phase retrieval. However, the converse is not true. For example, the standard Rademacher sequence $(r_n)$ in $L_p[0,1]$, $1\leq p<\infty$, is dispersed by Khintchine's inequality, but $|r_n|\equiv 1$ for all $n$. Moreover, if one adds a single disjoint vector to a dispersed subspace, one produces a dispersed subspace  failing phase retrieval. Nevertheless, as mentioned in \Cref{OVERview}, many of the results in \Cref{summary} have SPR analogues.
\end{remark}


\subsection{H\"older stable phase retrieval and witnessing failure of SPR on orthogonal vectors}\label{HSPOV}
In \cite{christ2022examples}, the following terminology was introduced in the setting of $L_p$-spaces: A subspace $E$ of a real or complex Banach lattice $X$ is said to do \emph{$\gamma$-H\"older stable phase retrieval with constant $C$} if for all $f,g\in E$ we have
\begin{equation}\label{HSPR IN}
    \inf_{|\lambda|=1}\|f-\lambda g\|_X\leq C\||f|-|g|\|_X^\gamma\left(\|f\|_X+\|g\|_X\right)^{1-\gamma}.
\end{equation}
The utility of this definition arose from a construction in \cite{christ2022examples} of SPR subspaces of $L_4(\mu)$ which are dispersed in $L_6(\mu)$. Applying  certain H\"older inequality arguments, \cite{christ2022examples} was then able to deduce that such subspaces do $\frac{1}{4}$-H\"older stable phase retrieval in $L_2(\mu)$. The idea in \cite{christ2022examples} is to begin with an orthonormal sequence $(r_k)$, and instead of comparing $|f|$ to $|g|$, one compares $|f|^2$ to $|g|^2$.  Assuming the integrability condition $r_k\in L_4(\mu)$ with uniformly bounded norm, and various orthogonality and mean-zero conditions on the products $r_k\overline{r}_j$, the orthogonal expansion $f=\sum_ka_kr_k$  leads to an orthogonal expansion 
$$|f|^2=\sum_{k\neq j}a_k\overline{a}_jr_k\overline{r}_j+\sum_k|a_k|^2 s_k+\|f\|_{L_2}^2\one, \ s_k=|r_k|^2-\one.$$
The products $r_k\overline{r}_j$ encode how the subspace ``sits" in $L_4(\mu)$, i.e., they encode the lattice structure. However, analyzing $|f|^2$ rather than $|f|$ allows one to work algebraically. As was shown in \cite{christ2022examples}, if one imposes appropriate orthogonality conditions, the subspace $E$ spanned by $r_k$ will do stable phase retrieval in $L_4(\mu)$.  \cite{christ2022examples} then gives examples of such $r_k$  built from dilates of a single function $P$, with $|P|$ not identically constant. Verifying that such sequences $(r_k)$ satisfy the required orthogonality conditions is then a combinatorial exercise, using sparseness of the dilates to get non-overlapping supports with respect to the basis expansion. This sparseness naturally leads to $E$ lying in higher $L_p$-spaces, so that by interpolating, one concludes that $E$ does H\"older stable phase retrieval  in $L_2(\mu)$ with $\gamma=\frac{1}{4}$ if $p=6$, and $\gamma\to \frac{1}{2}$ as $p\to \infty$. 
\\

The purpose of this section is to show that - at the cost of dilating the constant -  H\"older stable phase retrieval is equivalent to stable phase retrieval. For real scalars, this can already be deduced from the almost disjoint pair characterization in \Cref{Thm1}. However, the proof below works equally well for complex scalars.  The following theorem was proven in \cite{localphase} for phase retrieval using a continuous frame for a Hilbert space.  We extend it here to subspaces of Banach lattices.
 \begin{theorem}\label{thm hspr}
 Let $X$ be a Banach lattice, real or complex. There exists a universal constant $K=K_X\in [1,\sqrt{2}]$ such that for any linearly independent $f,g\in X$, there exists $f',g'\in \text{span}\{f,g\}$ with 
 \begin{equation}\label{8.4}
     \min_{|\lambda|=1}\|f-\lambda g\|\leq K \min_{|\lambda|=1}\|f'-\lambda g'\|,
 \end{equation}
 and 
 \begin{equation}\label{8.6}
     (\|f'\|^2+\|g'\|^2)^\frac{1}{2}\leq K \min_{|\lambda|=1}\|f'-\lambda g'\|,
 \end{equation}
 and
 \begin{equation}\label{8.5}
     \||f|-|g|\|\geq \||f'|-|g'|\|.
 \end{equation}
 \end{theorem}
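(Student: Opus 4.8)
The plan is to prove the theorem by an explicit construction of $f',g'$ inside the two‑dimensional plane $V=\mathrm{span}\{f,g\}$, carrying out the real case first and then indicating the modifications for complex scalars. The first step is a harmless normalisation: replacing $g$ by $\lambda_0 g$ for a suitable $|\lambda_0|=1$ changes neither $\mathrm{span}\{f,g\}$, nor $\min_{|\lambda|=1}\|f-\lambda g\|$, nor $\||f|-|g|\|$, so I may assume that the (attained) infimum $m:=\min_{|\lambda|=1}\|f-\lambda g\|$ equals $\|f-g\|$; in particular $0<m=\|f-g\|\le\|f+g\|$, and $f\pm g\ne 0$ by linear independence.

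For real scalars the whole construction is a single choice of auxiliary pair. Put $p=f-g$, $q=\frac{m}{\|f+g\|}(f+g)$, and $f'=\tfrac12(p+q)$, $g'=\tfrac12(q-p)\in V$, so that $\|p\|=\|q\|=m$ and $|q|\le|f+g|$ (since $m/\|f+g\|\le 1$). Then $f'-g'=p$, $f'+g'=q$, hence $\min_{|\lambda|=1}\|f'-\lambda g'\|=\min(\|p\|,\|q\|)=m$, which is \eqref{8.4} with constant $1$. Next $\|f'\|,\|g'\|\le\tfrac12(\|p\|+\|q\|)=m$, so $(\|f'\|^2+\|g'\|^2)^{1/2}\le\sqrt2\,m=\sqrt2\min_{|\lambda|=1}\|f'-\lambda g'\|$, which is \eqref{8.6} with constant $\sqrt2$. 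Finally $|f'|-|g'|=\tfrac12(|p+q|-|p-q|)$, so by the Birkhoff identity $\big||p+q|-|p-q|\big|=2(|p|\wedge|q|)$ of \cite[Theorem 1.7]{AB} we get $\big||f'|-|g'|\big|=|p|\wedge|q|$; combining $|q|\le|f+g|$ with the identity $|f+g|\wedge|f-g|=|f|\vee|g|-|f|\wedge|g|=\big||f|-|g|\big|$ (which follows from $x\wedge y=\tfrac12(x+y-|x-y|)$ together with the two Birkhoff identities) gives $\big||f'|-|g'|\big|=|p|\wedge|q|\le|f-g|\wedge|f+g|=\big||f|-|g|\big|$, which is \eqref{8.5}. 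Thus $K_X=\sqrt2$ always works in the real case, and it can be sharpened for particular lattices: in a Hilbert lattice $\|f'\|^2+\|g'\|^2=\tfrac14(\|p+q\|^2+\|p-q\|^2)=\tfrac12(\|p\|^2+\|q\|^2)=m^2$, so $K_X=1$ there.

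For complex scalars the two Birkhoff identities are no longer equalities, but one retains the one‑sided inequality $\big||p+q|-|p-q|\big|\le 2(|p|\wedge|q|)$ in every complex Banach lattice — apply the reverse triangle inequality $\big||z|-|w|\big|\le|z-w|$ with $z=p\pm q$, $w=p\mp q$ and then interchange the roles of $p$ and $q$ — and this is exactly what \eqref{8.5} needs, provided $q$ is chosen with $\big\|\,|f-g|\wedge|q|\,\big\|\le\||f|-|g|\|$. The substitute for the second identity is the pointwise relation $\big||f|-|g|\big|=\inf_{|\lambda|=1}|f-\lambda g|$, valid as a lattice infimum once $X_{\mathbb C}$ is realised as a space of measurable functions as in \Cref{CBL}. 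The plan is then to take $q$ to be an appropriate rescaling of $f-\lambda_1 g$ for a well‑chosen phase $\lambda_1$ — or, since a single phase need not suffice, a limit of such choices as finitely many phases $\lambda_1,\dots,\lambda_n$ become dense in the unit circle, keeping $\|q\|\le m$ throughout — and to rerun the three‑step computation above, the only change being that $\min_{|\lambda|=1}\|f'-\lambda g'\|$ is now merely bounded below by a fixed fraction of $m$, a loss absorbed into $K_X$.

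I expect the genuine obstacle to be precisely this last point in the complex case: producing inside the two‑complex‑dimensional plane $V$ a vector $q$ with $\|q\|$ comparable to $m$ and $\big\|\,|f-g|\wedge|q|\,\big\|$ comparable to $\||f|-|g|\|$, with constants sharp enough that \eqref{8.4}--\eqref{8.6} close up with $K_X\le\sqrt2$ rather than some larger absolute constant. The real argument worked because $f-g$ and a rescaling of $f+g$ are automatically ``lattice‑almost‑disjoint up to the error $\||f|-|g|\|$'', a coincidence special to real scalars; recovering this via the infimum‑over‑the‑circle characterisation — and checking the attendant order/Dini‑type convergence without assuming $X$ order continuous — is the technical heart of the complex case.
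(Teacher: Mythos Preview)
Your real-scalar argument is correct and genuinely different from the paper's. You exploit the Birkhoff identities $|x|\vee|y|=\tfrac12(|x+y|+|x-y|)$ and $|x|\wedge|y|=\tfrac12\big||x+y|-|x-y|\big|$ to get $|f-g|\wedge|f+g|=\big||f|-|g|\big|$ directly, and your choice $p=f-g$, $q=\tfrac{m}{\|f+g\|}(f+g)$ is clean: it yields equality in \eqref{8.4} and $K=\sqrt2$ in \eqref{8.6}. The paper instead puts an auxiliary Hilbert norm on $\spn\{f,g\}$, sets $f'=f-R(f+g)$, $g'=g-R(f+g)$ for $R\in[0,\tfrac12]$ chosen so that $\langle f',g'\rangle_H=0$, and proves \eqref{8.5} by the pointwise convexity inequality $h(a-w)-h(c-w)\le h(a)-h(c)$ for $h(s)=\sqrt{s^2+b^2}$. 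Your route is more elementary in the real case and avoids the function-space representation; the paper's route buys a single argument that works verbatim for both real and complex scalars, and produces $f',g'$ that are literally orthogonal in an ambient Hilbert structure (used later in \Cref{Rem on SPR Hilbert}).

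Your complex case, however, has a genuine gap. You correctly observe that $\big||p+q|-|p-q|\big|\le 2(|p|\wedge|q|)$ survives, but you then need a vector $q\in V=\spn_{\C}\{f,g\}$ with $\|q\|$ comparable to $m$ and $\big\||f-g|\wedge|q|\big\|\lesssim\big\||f|-|g|\big\|$. Your proposal---$q$ a rescaled $f-\lambda_1 g$, or a ``limit'' over finitely many phases $\lambda_1,\dots,\lambda_n$---cannot work as stated: the pointwise identity $\big||f(t)|-|g(t)|\big|=\inf_{|\lambda|=1}|f(t)-\lambda g(t)|$ requires a $t$-dependent optimal phase, so no single element of the two-dimensional space $V$ realises (or nearly realises) this infimum, and any genuine limit over phases leaves $V$. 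The Dini-type convergence you allude to would at best produce something in the ideal generated by $f,g$, not in their linear span, so $f',g'$ would no longer lie in $\spn\{f,g\}$. This is precisely why the paper abandons lattice identities for \eqref{8.5} and instead proves the pointwise inequality $\big||f_r(t)|-|g_r(t)|\big|\le\big||f(t)|-|g(t)|\big|$ via convexity of $s\mapsto\sqrt{s^2+b^2}$: that argument is insensitive to whether the scalar field is $\R$ or $\C$, and keeps $f',g'$ in $V$ automatically since $f'=f-R(f+g)$ and $g'=g-R(f+g)$.
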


 \begin{remark}
 Conditions \eqref{8.4} and \eqref{8.5} state that replacing $(f,g)$ by $(f',g')$ tightens the SPR inequality up to the universal factor $K = K_X$. The condition \eqref{8.6} states that $f'$ and $g'$ are ``almost orthogonal''; it also permits us to witness the failure of SPR on $f', g'$ with controlled norm.
 \\
 
 The constant $K = K_X$ appearing in the proof of the theorem is the supremum of the Banach-Mazur distance between a $2$-dimensional subspace of $X$ and $\ell_2^2$. In general, by John's Theorem, $K_X \leq \sqrt2$, but in certain cases a better estimate can be obtained. For instance, if $X=L_2(\mu)$ then $K=1$. 
 \end{remark}
 
 To prove \Cref{thm hspr}, we need to represent elements of $X$ as measurable functions.
 As mentioned in the proof of \Cref{Thm1}, every (real) Banach lattice $X$ embeds lattice isometrically into a space of the form $\left(\bigoplus_{i\in I}L_1(\Omega_i,\Sigma_i,\mu_i)\right)_\infty.$
 Hence, throughout the proof we can assume that elements of $X$ are functions on a measure space. In the complex case, a similar reduction is possible. Indeed, let $X$ be a complex Banach lattice. By the discussion in \Cref{CBL}, we can assume that $X=Z_\mathbb{C}$ is the complexification of some (real) Banach lattice $Z$. We can then let $T:Z\to \left(\bigoplus_{i\in I}L_1(\Omega_i,\Sigma_i,\mu_i)\right)_\infty$ be a lattice isometric embedding. The complexification $T_\mathbb{C}$ maps $X$ into the complexification of $\left(\bigoplus_{i\in I}L_1(\Omega_i,\Sigma_i,\mu_i)\right)_\infty$. The codomain of this map is still $\left(\bigoplus_{i\in I}L_1(\Omega_i,\Sigma_i,\mu_i)\right)_\infty$,  but now interpreted as a Banach lattice over the complex field (cf.~\cite[Exercises 3 and 5 on page 110]{MR1921782}). Since $T$ is one-to-one, the definition of $T_\mathbb{C}$ tells us that $T_\mathbb{C}$ is one-to-one. Moreover, as mentioned in \Cref{CBL}, $T_\mathbb{C}$ preserves moduli. Finally, by \cite[Lemma 3.18 or Corollary 3.23]{MR1921782},
$T_\mathbb{C}$ preserves norm. Thus, everything in the SPR inequality is preserved, so, analogously to the real case,  we may assume throughout the proof that the complex Banach lattice $X$ is a space of complex-valued functions.

 \begin{proof}[Proof of \Cref{thm hspr}]
Let $Y=\text{span} \{f,g\}$. This is a $2$-dimensional Banach space. Hence, there exists an equivalent norm $\|\cdot\|_H$ such that $(Y,\|\cdot\|_H)$ is Hilbert, and 
$$\|\cdot\|\leq \|\cdot\|_H\leq \sqrt{2}\|\cdot\|.$$
By replacing $g$ by a unimodular scalar times $g$, we assume
$$\min_{|\lambda|=1}\|f-\lambda g\|_H=\|f- g\|_H.$$
 This latter condition is equivalent to $\langle f,g\rangle\geq 0.$ Indeed,
$$\|f-\lambda g\|_H^2=\langle f,f\rangle+\langle g,g\rangle-2\Re\left(\lambda \langle f,g\rangle\right).$$
This is minimized when $\lambda $ is the conjugate phase of $\langle f,g\rangle$. This is minimized when $\lambda=1$ iff $\langle f,g\rangle\geq 0$.
\\

Consider $f_r:=f-r(f+g)$ and $g_r:=g-r(f+g)$ for $r\in [0,1/2]$. We let $R$ be the first instance of $\langle f-r(f+g),g-r(f+g)\rangle=0.$ This is possible since when $r=0$, the inner product is non-negative, and when $r=\frac{1}{2}$, it is negative. Note that 
$$\|f_r-g_r\|_H=\|f-g\|_H.$$
Thus, since $f_R$ and $g_R$ are orthogonal,
$$\min_{|\lambda|=1}\|f_R-\lambda g_R\|_H=\min_{|\lambda|=1}\|f-\lambda g\|_H.$$
We will take $f'=f_R$ and $g'=g_R$. To see \eqref{8.4}, we compute
\begin{equation}
    \begin{split}
      \min_{|\lambda|=1}\|f-\lambda g \|\leq \min_{|\lambda|=1}\|f-\lambda g\|_H=\min_{|\lambda|=1}\|f'-\lambda g'\|_H\leq \sqrt{2}\min_{|\lambda|=1}\|f'-\lambda g'\|.
    \end{split}
\end{equation}
Moreover, as $f'$ and $g'$ are orthogonal in $H$,
\begin{equation}
\sqrt{2}\min_{|\lambda|=1}\|f'-\lambda g'\|\geq \min_{|\lambda|=1}\|f'-\lambda g'\|_H=(\|f'\|_H^2+\|g'\|_H^2)^\frac{1}{2}\geq(\|f'\|^2+\|g'\|^2)^\frac{1}{2}.
\end{equation}
This gives \eqref{8.6}. Note that in the worst case scenario, we have $K\leq \sqrt{2}.$ However, if the Banach-Mazur distance to $\ell_2^2$ is less than $\sqrt{2}$, the constant improves.
\\

We now verify \eqref{8.5}. To see this, we prove \begin{equation}\left||f_r|-|g_r|\right|\leq \left||f|-|g|\right| \ \text{for} \  r\in [0,\frac{1}{2}].
\end{equation}
We represent $X\subseteq L^0(\Omega)$ and let $t\in \Omega$. 
 We will prove that
\begin{equation}\label{cn reduction}\left||f_r(t)|-|g_r(t)|\right|\leq \left||f(t)|-|g(t)|\right| \ \text{for} \  r\in [0,\frac{1}{2}].
\end{equation}
Note that \eqref{cn reduction} is simply a claim that an elementary inequality holds for complex numbers.  Write $f(t)=a+ib$ and $g(t)=c+id$. Multiplying $f(t)$ and $g(t)$ by a unimodular scalar, we rotate so that $d=-b.$ WLOG, $|a|\geq |c|$; then, multiplying by $-1$ if necessary, we also assume $a\geq 0$. We have 
$$f_r(t)=a-r(a+c)+ib, \ \ \ g_r(t)=c-r(a+c)-ib.$$
Now, we note that our assumptions give $\left||f_r(t)|-|g_r(t)|\right|=|f_r(t)|-|g_r(t)|$ for $0\leq r\leq \frac{1}{2}.$ Indeed, $\Im (f_r(t))=-\Im(g_r(t))$ and $\left(\Re(f_r(t))\right)^2\geq \left(\Re(g_r(t))\right)^2 $ for $0\leq r\leq \frac{1}{2}$ by elementary computations. Taking $r=0$, $\left||f(t)|-|g(t)|\right|=|f(t)|-|g(t)|.$ Hence, we must prove
$$|f_r(t)|-|g_r(t)|\leq |f(t)|-|g(t)| \ \text{for}\ r\in [0,\frac{1}{2}].$$
This inequality is true for all $r\geq 0$. Indeed, recall first that $a\geq c$. By the Fundamental Theorem of Calculus, for any convex function $\phi$ and $w \geq 0$, we have $\phi(a-w) - \phi(c-w) \leq \phi(a) - \phi(c)$.
In our case, the function $h(s)=\sqrt{s^2+b^2}$ is convex and $r(a+c)\geq 0$; therefore,
$$|f_r(t)|-|g_r(t)|=h(a-r(a+c))-h(c-r(a+c))\leq h(a)-h(c)=|f(t)|-|g(t)|. \qedhere$$
 \end{proof}
\begin{remark}\label{Rem on SPR Hilbert}
For real or complex $L_2(\mu)$, the  proof of \Cref{thm hspr} shows that $K=1$, and $f'$ is orthogonal to $g'$. Actually, the proof gives a more  local result: If $E$ is a closed subspace of a Banach lattice $X$, and $E$ is $C$-isomorphic to a Hilbert space $H$, then for $f,g\in E$ one can take $K=C$ and $f'$  orthogonal to $g'$ in $H$. 
\end{remark}

\begin{corollary}\label{HSPR==SPR}
Let $E$ be a subspace of a real or complex Banach lattice $X$, and $\gamma\in (0,1]$. If $E$ does $\gamma$-H\"older stable phase retrieval in $X$ with constant $C>0$ then $E$ does stable phase retrieval in $X$ with constant $\sqrt{2}(\sqrt{8}C)^\frac{1}{\gamma}$.
\end{corollary}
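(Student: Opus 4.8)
The plan is to use Theorem~\ref{thm hspr} to reduce the H\"older SPR inequality to pairs $f', g'$ that are ``almost orthogonal'' in the sense of \eqref{8.6}, and then observe that on such pairs the extra factor $(\|f\|+\|g\|)^{1-\gamma}$ in the H\"older inequality \eqref{HSPR IN} is controlled by $\min_{|\lambda|=1}\|f'-\lambda g'\|$, so that the H\"older inequality collapses to an honest Lipschitz estimate. First I would take arbitrary linearly independent $f, g \in E$ (the linearly dependent case is trivial, since then $\inf_{|\lambda|=1}\|f-\lambda g\| = 0$ or $f,g$ differ by a nonnegative scalar and one checks the inequality directly) and apply Theorem~\ref{thm hspr} to obtain $f', g' \in \mathrm{span}\{f,g\} \subseteq E$ satisfying \eqref{8.4}, \eqref{8.5}, \eqref{8.6}. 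Note $f', g'$ lie in $E$ because $E$ is a subspace, so the hypothesis that $E$ does $\gamma$-H\"older SPR applies to the pair $(f', g')$.

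Next I would chain the inequalities. By \eqref{8.4},
\[
\inf_{|\lambda|=1}\|f-\lambda g\| \le K \min_{|\lambda|=1}\|f'-\lambda g'\|,
\]
and it suffices to bound $\min_{|\lambda|=1}\|f'-\lambda g'\|$ by a constant times $\||f|-|g|\|$. Applying $\gamma$-H\"older SPR to $(f', g')$ gives
\[
\min_{|\lambda|=1}\|f'-\lambda g'\| \le C\,\||f'|-|g'|\|^\gamma \big(\|f'\|+\|g'\|\big)^{1-\gamma}.
\]
Using $\|f'\|+\|g'\| \le \sqrt{2}\,(\|f'\|^2+\|g'\|^2)^{1/2} \le \sqrt{2}\,K\min_{|\lambda|=1}\|f'-\lambda g'\|$ by \eqref{8.6}, and abbreviating $m := \min_{|\lambda|=1}\|f'-\lambda g'\|$ and $\delta := \||f'|-|g'|\|$, we get $m \le C\,\delta^\gamma (\sqrt{2}K m)^{1-\gamma}$, i.e. $m^\gamma \le C (\sqrt 2 K)^{1-\gamma}\delta^\gamma$, hence $m \le (C(\sqrt2 K)^{1-\gamma})^{1/\gamma}\,\delta$. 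Finally, by \eqref{8.5}, $\delta = \||f'|-|g'|\| \le \||f|-|g|\|$, so combining with \eqref{8.4},
\[
\inf_{|\lambda|=1}\|f-\lambda g\| \le K \big(C(\sqrt2 K)^{1-\gamma}\big)^{1/\gamma}\,\||f|-|g|\|.
\]

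The last step is just to bound the constant using $K \le \sqrt{2}$ (from John's theorem, as noted after Theorem~\ref{thm hspr}): $K(C(\sqrt2 K)^{1-\gamma})^{1/\gamma} \le \sqrt2 \big(C\cdot 2^{1-\gamma}\big)^{1/\gamma} \le \sqrt{2}\,(\sqrt{8}\,C)^{1/\gamma}$, since $2 \le \sqrt8$ ensures $C 2^{1-\gamma} \le C\sqrt8 \le (\sqrt8 C)$ when $C \ge 1$, hence $(C2^{1-\gamma})^{1/\gamma} \le (\sqrt8 C)^{1/\gamma}$. (If one wants to avoid assuming $C \ge 1$, note one may always enlarge $C$ to be at least $1$.) This matches the claimed constant $\sqrt2(\sqrt8 C)^{1/\gamma}$. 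I do not expect any real obstacle here: the only subtle point is keeping track of which pair the hypothesis is being applied to — it is the pair $(f',g')$, which lies in $E$ precisely because $E$ is a \emph{linear} subspace and $f',g' \in \mathrm{span}\{f,g\}$ — and verifying that the algebra of the constant bookkeeping genuinely yields the stated bound. The heavy lifting has already been done in Theorem~\ref{thm hspr}; this corollary is essentially a bookkeeping consequence.
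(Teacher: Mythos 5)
Your proposal is correct and follows essentially the same route as the paper: both reduce to the ``almost orthogonal'' pair produced by \Cref{thm hspr} and use \eqref{8.6} to absorb the factor $(\|f'\|+\|g'\|)^{1-\gamma}$, turning the H\"older estimate into a Lipschitz one before transferring back via \eqref{8.4} and \eqref{8.5}; the paper merely normalizes ($\|f\|=1$, $\|g\|\le 1$) and scales instead of dividing by $m^{1-\gamma}$ homogeneously. The only points worth a word in a final write-up are the trivial case $\min_{|\lambda|=1}\|f'-\lambda g'\|=0$ (where \eqref{8.4} already gives the conclusion) and the observation that $C\ge 1$ automatically for $E\neq\{0\}$, so your constant bookkeeping goes through as written.
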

\begin{proof}
Let $f,g\in E$ with  $\|f\|=1$ and $\|g\|\leq 1$ such that 
\begin{equation}\label{E:bound}
(\|f\|^2+\|g\|^2)^\frac{1}{2}\leq \sqrt{2} \inf_{|\lambda|=1}\|f-\lambda g\|.
\end{equation}
In particular, 
$$2^{-1/2}\leq \inf_{|\lambda|=1}\|f-\lambda g\|\leq 2.$$  As $E$ does $C$-stable $\gamma$-H\"older phase retrieval, we have that
\begin{equation}
   2^{-1/2}\leq \inf_{|\lambda|=1}\|f-\lambda g\|\leq C\||f|-|g|\|^\gamma (\|f\|+\|g\|)^{1-\gamma}\leq 2^{1-\gamma}C\||f|-|g|\|^\gamma.
\end{equation}
Thus, we have that $C^{1/\gamma}2^{3/(2\gamma)-1}\||f|-|g|\|\geq1 $ and $\inf_{|\lambda|=1}\|f-\lambda g\|\leq 2$. It follows that
\begin{equation}\label{E:sprbound}
\inf_{|\lambda|=1}\|f-\lambda g\|
\leq (2^{3/2} C)^{1/\gamma}\||f|-|g|\|.
\end{equation}
To prove \eqref{E:sprbound} we have assumed that $\|f\|=1$ and $\|g\|\leq 1$.  However, by scaling we have that any $f,g\in E$  which satisfy \eqref{E:bound} also satisfy \eqref{E:sprbound}.
\\

We now consider any pair of linearly independent vectors $x,y\in E$.  By \Cref{thm hspr} there exists $f,g\in E$ which satisfy \eqref{E:bound} such that 
 $$ \min_{|\lambda|=1}\|x-\lambda y\|\leq \sqrt{2} \min_{|\lambda|=1}\|f-\lambda g\|\textrm{ and }     \||x|-|y|\|\geq \||f|-|g|\|.
$$
Thus, we have that
$$\min_{|\lambda|=1}\|x-\lambda y\|\leq 2^{1/2}(2^{3/2} C)^{1/\gamma}\||x|-|y|\|.
$$
This proves that $E$ does $2^{1/2}(2^{3/2} C)^{1/\gamma}$-stable phase retrieval. 
\end{proof}

\begin{remark}
The constant $\sqrt{2}(\sqrt{8}C)^\frac{1}{\gamma}$ in \Cref{HSPR==SPR} arises by using the worst case scenario $K=\sqrt{2}$ from \Cref{thm hspr}. This constant can certainly be optimized; for example, if one also takes into account the distance from $E$ to a Hilbert space.
\end{remark}
To conclude this section we give a simple proof that in finite dimensions, phase retrieval is automatically stable.
\begin{corollary}\label{Cor HSPR1}
Let $X$ be a real or complex Banach lattice, and $E$ a finite dimensional subspace of $X$. If $E$ does phase retrieval, then $E$ does stable phase retrieval.
\end{corollary}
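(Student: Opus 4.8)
The plan is to argue by contradiction using \Cref{thm hspr}, which — unlike the almost disjoint pair characterization of \Cref{Thm1} — is available for complex scalars as well (recall that \Cref{Thm1} genuinely fails in the complex setting, so the quick compactness argument sketched in \Cref{Rem 1 on SPR} does not apply there; this is the reason we route the proof through \Cref{thm hspr}).

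First I would record a trivial observation: any pair $f, g \in E$ violating $C$-stable phase retrieval (with $C \ge 1$) must be linearly independent. Indeed, if $f = \mu g$ then, choosing a unimodular $\lambda_0$ with $\mu = |\mu|\lambda_0$, one computes $\|f - \lambda_0 g\| = \big||\mu| - 1\big|\,\|g\| = \||f| - |g|\|$, so $\inf_{|\lambda| = 1}\|f - \lambda g\| \le \||f| - |g|\|$ and the pair already satisfies $1$-stable phase retrieval. So if $E$ failed stable phase retrieval, then for each $n \in \N$ there would be linearly independent $f_n, g_n \in E$ with $\inf_{|\lambda| = 1}\|f_n - \lambda g_n\| > n\,\||f_n| - |g_n|\|$.

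Next I would feed each pair $(f_n, g_n)$ into \Cref{thm hspr}, producing $f_n', g_n'$ in their span satisfying \eqref{8.4}, \eqref{8.5} and \eqref{8.6} with the universal constant $K = K_X \in [1, \sqrt 2]$. Chaining \eqref{8.4} and \eqref{8.5} with the defining inequality for $(f_n, g_n)$ shows that $(f_n', g_n')$ still violates $(n/K)$-stable phase retrieval, while \eqref{8.6} furnishes the crucial norm bound $(\|f_n'\|^2 + \|g_n'\|^2)^{1/2} \le K\min_{|\lambda| = 1}\|f_n' - \lambda g_n'\|$. Dividing by $M_n := \min_{|\lambda| = 1}\|f_n' - \lambda g_n'\|$, which is positive by linear independence, I obtain $\hat f_n, \hat g_n \in E$ with $\min_{|\lambda| = 1}\|\hat f_n - \lambda \hat g_n\| = 1$, with $\|\hat f_n\|^2 + \|\hat g_n\|^2 \le K^2$, and with $\||\hat f_n| - |\hat g_n|\| < K/n$.

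Finally, since $E$ is finite dimensional, the ball of radius $K$ in $E$ is compact, so a subsequence converges: $\hat f_n \to f$ and $\hat g_n \to g$ in $E$. Using continuity of the map $(x, y) \mapsto \min_{|\lambda| = 1}\|x - \lambda y\|$ (a minimum over the compact circle of the jointly continuous function $(x,y,\lambda)\mapsto\|x-\lambda y\|$) together with the $1$-Lipschitz estimate $\||u| - |v|\| \le \|u - v\|$ for the modulus, I conclude $\min_{|\lambda| = 1}\|f - \lambda g\| = 1$ while $|f| = |g|$; in particular $f$ and $g$ are linearly independent, hence nonzero. But then phase retrieval gives $f = \lambda g$ with $|\lambda| = 1$ (evaluate moduli, using $g \ne 0$), forcing $\min_{|\lambda| = 1}\|f - \lambda g\| = 0$, a contradiction. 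The one genuine subtlety — and the point at which \Cref{thm hspr} is indispensable — is the passage to a compact set: rescaling by $\min_{|\lambda|=1}\|f_n - \lambda g_n\|$ alone would not control $\|f_n\|$ and $\|g_n\|$, and it is exactly the almost-orthogonality bound \eqref{8.6} that supplies this. (For real scalars one could alternatively invoke \Cref{Thm1} together with the compactness remark in \Cref{Rem 1 on SPR}.)
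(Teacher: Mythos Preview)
Your proof is correct and follows essentially the same route as the paper: invoke \Cref{thm hspr} to produce, for each $n$, a pair witnessing failure of $(n/K)$-SPR while lying in a fixed bounded set, then extract a convergent subsequence by finite-dimensionality and derive a contradiction with phase retrieval. The only cosmetic difference is the normalization---you divide by $M_n=\min_{|\lambda|=1}\|f_n'-\lambda g_n'\|$, whereas the paper rescales so that $\|f_N\|=1$ and $\|g_N\|\le 1$---but both choices exploit \eqref{8.6} in the same way to achieve compactness.
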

\begin{proof}
The real case has already been dealt with in \Cref{Rem 1 on SPR}, but the argument we provide below works for both real and complex scalars. Indeed, by \Cref{thm hspr}, if $E$ fails to do stable phase retrieval then we can find, for each $N\in \mathbb{N}$, functions $f_N, g_N$ with $\|f_N\|=1$, $\|g_N\|\leq 1$,  
\begin{equation}\label{SPR IN FINITE 2}
     (\|f_N\|^2+\|g_N\|^2)^\frac{1}{2}\leq \sqrt{2} \min_{|\lambda|=1}\|f_N-\lambda g_N\|,
\end{equation}
and 
\begin{equation}\label{STABLE IN FINITE}
2\geq  \min_{|\lambda|=1}\|f_N-\lambda g_N\|>N\||f_N|-|g_N|\|.
\end{equation}

By compactness, after passing to subsequences, we may assume that $f_N\xrightarrow{\|\cdot\|}f$ and $g_N\xrightarrow{\|\cdot\|}g$, for some $f,g\in E$. Since $\|f_N\|=1$ for all $N$, it follows that $\|f\|=1$. Moreover, from \eqref{STABLE IN FINITE} and continuity of lattice operations, we see that $\||f|-|g|\|=0$.  Hence, $|f|=|g|\neq 0$.
Fix a phase $\lambda$. By \eqref{SPR IN FINITE 2}, we have 
\begin{equation*}
     (\|f_N\|^2+\|g_N\|^2)^\frac{1}{2}\leq \sqrt{2} \|f_N-\lambda g_N\|.
\end{equation*}
Passing to the limit, we see that 
\begin{equation*}
    1\leq (\|f\|^2+\|g\|^2)^\frac{1}{2}\leq \sqrt{2}\|f-\lambda g\|.
\end{equation*}
Hence, $f\neq \lambda g$. It follows that $E$ fails to do phase retrieval.
\end{proof}
\begin{remark}
Note that the Banach lattice $X$ in \Cref{Cor HSPR1} is not assumed to be finite dimensional. This is of some note, as, unlike for closed spans, the closed sublattice generated by a finite set can be infinite dimensional.
\end{remark}

\section{Examples}\label{SPR EXAMPLES}

\subsection{Building SPR subspaces via isometric theory}\label{Isometric theory}
As mentioned in \Cref{summary}, when $1\leq p<2$ and $q\in (p,2]$, one can find isometric copies of $L_q[0,1]$ in $L_p[0,1].$ As we will now see, such subspaces must do SPR.

\begin{proposition}\label{rem-clark}
Suppose $p, q \in [1,\infty)$, and either $(1)$ $1 \leq p < q \leq 2$, or $(2)$ $q = 2 < p < \infty$. 
There exists an $\varepsilon>0$ such that if $E\subseteq L_p[0,1]$ is $(1+\varepsilon)$-isomorphic to $F \subseteq L_q[0,1]$, then $E$ does SPR in $L_p[0,1]$.
\end{proposition}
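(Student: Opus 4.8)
The plan is to reduce everything to the almost-disjoint-pairs criterion of \Cref{Thm1}: it suffices to produce a constant $\delta_0=\delta_0(p,q,\varepsilon)>0$ such that every normalized pair $f,g\in S_E$ satisfies $\||f|\wedge|g|\|_p\ge \delta_0$, for then $E$ does $\tfrac{2}{\delta_0}$-stable phase retrieval. Fix an isomorphism $T:F\to E$ normalized so that $\|x\|_q\le \|Tx\|_p\le (1+\varepsilon)\|x\|_q$, and for $f,g\in S_E$ set $u=T^{-1}f$, $v=T^{-1}g$, so that $\|u\|_q,\|v\|_q\le 1$ while $\|u\pm v\|_q\ge \tfrac{1}{1+\varepsilon}\|f\pm g\|_p$. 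The conceptual heart is that $T$ ignores the lattice structure, so the argument couples a Clarkson-type inequality in $L_q$ (transported through $T$) against two \emph{purely lattice-theoretic} facts in $L_p$. Writing $\delta=\||f|\wedge|g|\|_p$, $a=\|\,|f|+|g|\,\|_p$, $b=\|\,|f|-|g|\,\|_p$, the pointwise relations $|f+g|\vee|f-g|=|f|+|g|$ and $|f+g|\wedge|f-g|=\big||f|-|g|\big|$ (which underlie the computation in the proof of \Cref{Thm1}) give the exact identity
\[
\|f+g\|_p^p+\|f-g\|_p^p=a^p+b^p,
\]
together with $2^{1/p}\le a\le 2$ and $0\le a-b\le 2\delta$ (the last because $(|f|+|g|)-\big||f|-|g|\big|=2(|f|\wedge|g|)$).

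Case (1), $1\le p<q\le 2$: here $q>1$, so Clarkson's inequality gives $\|u+v\|_q^{q'}+\|u-v\|_q^{q'}\le 2(\|u\|_q^q+\|v\|_q^q)^{q'-1}\le 2^{q'}$, where $q'=q/(q-1)\ge 2>p$. Transporting through $T$ yields $\|f+g\|_p^{q'}+\|f-g\|_p^{q'}\le (2(1+\varepsilon))^{q'}$. Since $q'>p$, the power-mean inequality combined with $a^p+b^p\ge 2b^p\ge 2(2^{1/p}-2\delta)^p$ produces the matching lower bound $\|f+g\|_p^{q'}+\|f-g\|_p^{q'}\ge 2(2^{1/p}-2\delta)^{q'}$. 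Comparing the two forces $2^{1/p}-2\delta\le 2^{1/q}(1+\varepsilon)$, hence $\delta\ge \tfrac12\big(2^{1/p}-2^{1/q}(1+\varepsilon)\big)$, which is a positive constant as soon as $1+\varepsilon<2^{1/p-1/q}$.

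Case (2), $q=2<p$: now $F$ is a Hilbert space in the $L_2$ norm, so pulling this norm back through $T$ equips $E$ with a Hilbert norm $\|\cdot\|_H$ satisfying $\|\cdot\|_H\le\|\cdot\|_p\le(1+\varepsilon)\|\cdot\|_H$ and obeying the parallelogram law. Applying the parallelogram law to $f,g$ and using these norm comparisons in the favourable directions gives $\tfrac{4}{(1+\varepsilon)^2}\le \|f+g\|_p^2+\|f-g\|_p^2$. To bound the right-hand side I replace the lower estimate of Case (1) by a one-sided \emph{upper} estimate on $a$: the elementary inequality $(x+y)^p-x^p-y^p\le p(x+y)^{p-1}(x\wedge y)$ followed by H\"older yields $a^p\le 2+p2^{p-1}\delta$, whence $\|f+g\|_p^p+\|f-g\|_p^p=a^p+b^p\le 4+p2^p\delta$; the power-mean inequality (now with exponent $2<p$, so running the opposite way) then gives $\|f+g\|_p^2+\|f-g\|_p^2\le 2^{1-2/p}(4+p2^p\delta)^{2/p}$. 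Combining, $\tfrac{4}{(1+\varepsilon)^2}\le 2^{1-2/p}(4+p2^p\delta)^{2/p}$; since the right-hand side tends to $2^{1+2/p}$ as $\delta\to 0$ while $\tfrac{4}{(1+\varepsilon)^2}>2^{1+2/p}$ exactly when $1+\varepsilon<2^{1/2-1/p}$, this is impossible for all sufficiently small $\delta$, giving again a lower bound $\delta\ge\delta_0>0$.

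I expect the main obstacle to be bookkeeping of directions: the Clarkson/parallelogram inequality, the power-mean inequality, and the two isomorphism bounds must all be applied on the correct side, and these directions \emph{flip} between the two cases precisely at the crossover $q'=2$ (which is why $q=2<p$ cannot be folded into Case (1)). The second delicate point is the lattice conversion of ``$\delta$ small'' into usable control of $a$ and $b$, with the refined one-sided bound $a^p\le 2+p2^{p-1}\delta$ of Case (2) being the most subtle ingredient. In both cases the admissible range is governed by the single clean condition $1+\varepsilon<2^{|1/p-1/q|}$, which determines the $\varepsilon$ (and hence the SPR constant $\tfrac{2}{\delta_0}$) in the statement.
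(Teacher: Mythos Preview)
Your proof is correct and follows the same core strategy as the paper: invoke the almost-disjoint-pair criterion (\Cref{Thm1}) and show that almost disjoint normalized pairs in $E\subseteq L_p$ would, after transport through the $(1+\varepsilon)$-isomorphism with $F\subseteq L_q$, violate Clarkson's inequality (the parallelogram law when $q=2$). The paper argues more tersely by contradiction---``failure of SPR gives $(1+c)$-copies of $\ell_p^2$ for every $c>0$, and Clarkson forbids these in $L_q$''---whereas you carry out the same contradiction as a direct quantitative computation via the lattice identity $\|f+g\|_p^p+\|f-g\|_p^p=\||f|+|g|\|_p^p+\||f|-|g|\|_p^p$ and the power-mean inequality, yielding the explicit threshold $1+\varepsilon<2^{|1/p-1/q|}$.
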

\begin{proof}
We only handle case (1), as (2) is very similar. Suppose, for the sake of contradiction, that $E$ fails SPR. Then by \Cref{Thm1}, $E$ contains $c$-isomorphic copies of $\ell_p^2$, for any $c>1$.
Consequently, for any such $c$ we can find norm one $f, g \in E$ so that $\|f+g\|_{L_p}, \|f-g\|_{L_p} \geq c^{-1} 2^{1/p}$. However, by the Clarkson inequality in $L_q$,
$$
\|f+g\|_{L_{q}}^{q'} + \|f-g\|_{L_q}^{q'} \leq 2 (\|f\|_{L_q}^q + \|g\|_{L_q}^q)^{q'-1} \leq (1+\varepsilon)^{q'} 2^{q'} ,
$$
where $1/q + 1/q' = 1$. However, the left side is $\geq c^{-q'} 2^{1+q'/p}$, and it is easy to see that $1 + q'/p > q'$. Hence, we get a contradiction if $\varepsilon>0$ is sufficiently small.
\end{proof}

\begin{corollary}\label{NCOR}
If either $1 \leq p < q \leq 2$, or $q = 2 < p < \infty$, then $L_p[0,1]$ contains an SPR subspace isometric to $L_q[0,1]$.
\end{corollary}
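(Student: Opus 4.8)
The plan is to combine \Cref{rem-clark} with the isometric embeddings recorded in the preliminaries. Recall from \Cref{summary}(iii) (with its proof pointing to \cite[Corollary 2.f.5]{MR540367}) that for $1\le p<q<2$ there is a subspace of $L_p[0,1]$ isometric to $L_q[0,1]$, and that $\ell_2$ embeds isometrically into $L_p[0,1]$ for every $1\le p<\infty$ by \cite[Proposition 6.4.12]{AK}. So in every case under consideration there is an isometric copy $E\subseteq L_p[0,1]$ of $L_q[0,1]$. The only remaining point is that this isometric copy does SPR in $L_p[0,1]$.

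First I would note that in both cases an isometric embedding of $L_q[0,1]$ is in particular a $(1+\varepsilon)$-isomorphic embedding for the $\varepsilon>0$ produced in \Cref{rem-clark}; one simply takes $F=L_q[0,1]$ (or $F=\ell_2$ in the case $q=2<p$). Applying \Cref{rem-clark} directly then gives that $E$ does SPR in $L_p[0,1]$, which is exactly the assertion of the corollary. Thus the corollary is essentially an immediate specialization of \Cref{rem-clark} once one knows the isometric copies exist; the content has been front-loaded into the preliminaries and into \Cref{rem-clark}.

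The one subtlety to address is that \Cref{summary}(iii) as stated only supplies an isometric copy of $L_q[0,1]$ in $L_p[0,1]$ when $q<2$; the endpoint $q=2$ (both in case (1) and as the value in case (2)) is handled separately by the isometric embedding of $\ell_2$ into $L_p[0,1]$. One should remark that $\ell_2$ is of course isometric to $L_2(\mu)$ for a purely atomic measure $\mu$, or, if one prefers a copy of $L_2[0,1]$ itself, note that $L_2[0,1]$ is isometric to $\ell_2$ as Hilbert spaces and the Gaussian construction of \cite[Proposition 6.4.12]{AK} can be run on an infinite iid Gaussian family to produce an isometric copy of $L_2[0,1]$ inside $L_p[0,1]$ for all $1\le p<\infty$; in any case the relevant $q=2$ subspace exists. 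I do not anticipate a genuine obstacle here: the entire proof is a two-line invocation of \Cref{rem-clark} together with the cited isometric embeddings, and the only care needed is bookkeeping the endpoint $q=2$ and confirming that an isometry is in particular a $(1+\varepsilon)$-isomorphism.
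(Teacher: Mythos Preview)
Your proposal is correct and matches the paper's own proof: cite the existence of an isometric copy of $L_q[0,1]$ inside $L_p[0,1]$ under the stated conditions, then apply \Cref{rem-clark} (an isometry is trivially a $(1+\varepsilon)$-isomorphism). Your worry about the endpoint $q=2$ is unnecessary, since $L_2[0,1]$ and $\ell_2$ are isometric as separable infinite-dimensional Hilbert spaces; the Gaussian embedding of $\ell_2$ therefore already yields an isometric copy of $L_2[0,1]$ in $L_p[0,1]$ for every $1\le p<\infty$.
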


\begin{proof}
It is well known (see e.g.~\cite[Section 9]{Handbook-intro}) that, under the above conditions, $L_p[0,1]$ contains an isometric copy of $L_q[0,1]$. By \Cref{rem-clark}, that copy does SPR.
\end{proof}

\subsection{Existence of SPR embeddings into sequence spaces}\label{ss:SPR embeddings}

\begin{proposition}\label{into l infty}
If a Banach space $E$ embeds into $\ell_\infty(\alpha)$ for some cardinal $\alpha$ $($which happens, in particular, when $E$ itself has density character $\alpha)$, then there is an isomorphic SPR  embedding of $E$ inside of $\ell_\infty(\alpha)$. 
\end{proposition}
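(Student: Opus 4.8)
The plan is to use the characterization of SPR from \Cref{Thm1}: it suffices to embed $E$ into $\ell_\infty(\alpha)$ in such a way that the image contains no almost disjoint pairs, i.e., there is $\varepsilon > 0$ so that $\||f| \wedge |g|\|_\infty \geq \varepsilon$ for every pair of norm-one vectors $f, g$ in the image. Equivalently, after rescaling, I want the image $F$ of $E$ to satisfy: there is a fixed coordinate (or small set of coordinates) where \emph{every} norm-one vector of $F$ is bounded below in absolute value by a uniform constant, since then any two norm-one vectors agree in being ``large'' at that coordinate, forcing $\||f| \wedge |g|\|_\infty$ to be bounded below.

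The construction I would carry out: start from a given isomorphic embedding $j : E \to \ell_\infty(\alpha)$, which we may normalize so that $\|x\|_E \leq \|j(x)\|_\infty \leq M \|x\|_E$ for all $x$. Pick a norming functional: since $\ell_\infty(\alpha)^*$ is large, actually it is cleaner to use that $B_{E^*}$ contains a norming subset, but the slick move is to adjoin one extra coordinate. Define $\widetilde{j} : E \to \ell_\infty(\alpha) \oplus_\infty \ell_\infty(B_{E^*})$ — but $\ell_\infty(B_{E^*})$ may have too large a density character; instead choose a norming set $\Gamma \subseteq B_{E^*}$ of cardinality at most $\alpha$ (possible since $E$, having an embedding into $\ell_\infty(\alpha)$, has density character at most $\alpha$, hence $E^*$ in the weak-$*$ topology has a weak-$*$ dense subset of size $\alpha$, which is norming). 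Then map $x \mapsto (j(x), (\phi(x))_{\phi \in \Gamma})$ into $\ell_\infty(\alpha) \oplus_\infty \ell_\infty(\Gamma) \cong \ell_\infty(\alpha)$. This new embedding $\widetilde{j}$ satisfies $\|\widetilde{j}(x)\|_\infty = \max\{\|j(x)\|_\infty, \sup_{\phi} |\phi(x)|\} = \max\{\|j(x)\|_\infty, \|x\|_E\}$, which is equivalent to $\|x\|_E$, so it is still an isomorphic embedding.

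The point of this is \emph{not} yet a lower bound at a single coordinate — I need one more idea. The right trick: fix \emph{one} norming functional-like device. Take $x_0 \in E$ with $\|x_0\|_E = 1$ and a functional, and instead tensor in a whole copy of $E$'s unit ball worth of structure... Actually the cleanest route: recall that any Banach space $E$ embeds isometrically into $\ell_\infty(B_{E^*})$ via $x \mapsto (\phi(x))_\phi$, and here every coordinate functional is a norm-one element of $E^*$. Now modify: consider the map $x \mapsto \big( (\phi(x))_{\phi \in \Gamma}, \ \tfrac12 \|x\|_E \cdot e \big)$ where we append a single coordinate carrying $\tfrac12\|x\|_E$ — wait, that coordinate is not linear in $x$, so that fails. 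The genuine fix, which I expect is the main obstacle, is this: we cannot put $\|x\|$ linearly in one coordinate, but we \emph{can} use that for a fixed norm-one $x_0$ and a norming $\Gamma$, the embedding $x \mapsto (\phi(x))_{\phi\in\Gamma}$ has the property that for norm-one $x$ there is \emph{some} $\phi$ with $|\phi(x)|$ close to $1$ — but which $\phi$ depends on $x$. To make it uniform, replace $\Gamma$ by ``blocks'': for each $\phi \in \Gamma$ include countably many perturbed copies, or better, use a fixed rank-one correction. I would resolve this by the standard device of adding, for a fixed $\phi_0 \in S_{E^*}$ and the corresponding direction, a coordinate equal to $\phi_0(x)$ and arranging $\phi_0$ to nearly norm a distinguished vector — but to get SPR for \emph{all} pairs, I will instead invoke the following: embed $E$ into $\ell_\infty(\Gamma)$ by $x \mapsto (\phi(x))_{\phi \in \Gamma}$ and then pass to $\ell_\infty(\Gamma \times \mathbb{N})$ via $x \mapsto (\phi(x) + 2^{-n}\psi_0(x))_{(\phi,n)}$ for a fixed $\psi_0$; the extra smeared-out term guarantees that wherever $|\phi(x)|$ is large the coordinate stays large, and a short computation shows no two norm-one images can be almost disjoint. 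I expect the bookkeeping of choosing $\Gamma$ of the correct cardinality and verifying the uniform lower bound on $\||f|\wedge|g|\|_\infty$ to be the only real work; the cardinality claim uses that density character of $E$ is at most $\alpha$, and the lower bound reduces, via \Cref{Thm1}(ii), to producing $\varepsilon > 0$ with $\||f|\wedge|g|\|_\infty \geq \varepsilon$ for $f,g \in S_F$, which follows because each such $f$ has $|f(\gamma)| \geq 1 - \delta$ for $\gamma$ ranging over a ``thick'' set of coordinates forced to overlap.
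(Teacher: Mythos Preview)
Your target is right: by \Cref{Thm1} it suffices to find an embedding $J$ with $\||Jx|\wedge|Jy|\|_\infty\ge\varepsilon$ for all norm-one $x,y\in E$. But the argument never produces such an $\varepsilon$. The several attempts (a coordinate carrying $\tfrac12\|x\|$, the smearing $(\phi(x)+2^{-n}\psi_0(x))_{(\phi,n)}$) are all disposed of correctly---the first is nonlinear, and the second collapses to the plain evaluation map on any norm-one $x$ with $\psi_0(x)=0$. The final claim, that ``each such $f$ has $|f(\gamma)|\ge1-\delta$ for $\gamma$ ranging over a thick set of coordinates forced to overlap,'' is where the real gap sits: for the canonical evaluation embedding $x\mapsto(\phi(x))_{\phi\in\Gamma}$, the set $\{\phi\in S_{E^*}:|\phi(x)|\ge1-\delta\}$ is in general a tiny weak$^*$-neighborhood of the norming functionals for $x$, and for two different unit vectors these sets need not meet at all. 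Nothing in your construction forces overlap.

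What is missing is exactly the lemma the paper isolates (\Cref{l:functionals}): for \emph{any} norm-one $x,y\in E$ there exists a norm-one $f\in E^*$ with $|f(x)|\wedge|f(y)|\ge 1/5$. One does not aim for both values near $1$; one settles for a universal constant, obtained by a short Hahn-Banach case split (either $y$ is close to the line through $x$, or not, in which case one averages functionals annihilating $x$ and $y$ respectively). With this lemma in hand, the evaluation map $x\mapsto(f_i(x))_{i\in I}$ for $(f_i)$ weak$^*$-dense in $S_{\ell_1(\alpha)}$ (approximating the $f$ from the lemma via Goldstine) gives $\||Jx|\wedge|Jy|\|_\infty\ge 1/5$ directly. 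Your outline would be complete once you supply this missing step; the cardinality bookkeeping you describe is fine.
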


The fact that any Banach space $E$ of density character $\alpha$ embeds isometrically into $\ell_\infty(\alpha)$ is standard. We recall the construction for the sake of completeness: Let $(x_i)_{i \in I}$ be a dense subset of $E$ of cardinality $\alpha$; for each $i$ find $x_i^* \in S_{E^*}$ so that $x_i^*(x_i) = \|x_i\|$. Then $E \to \ell_\infty(\alpha) : x \mapsto (x_i^*(x))_{i \in I}$ is the desired embedding. Similarly, one can show that if $E$ is a dual space, with a predual of density character $\alpha$, then $E$ embeds isometrically into $\ell_\infty(\alpha)$.
\\

To establish \Cref{into l infty}, it therefore suffices to prove:

\begin{lemma}\label{l infty to l infty}
For any cardinal $\alpha$, there exists an isometric SPR embedding of $\ell_\infty(\alpha)$ into itself.
\end{lemma}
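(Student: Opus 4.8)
I want an isometric linear embedding $J\colon \ell_\infty(\alpha)\to\ell_\infty(\alpha)$ whose image contains no almost disjoint pairs; by Theorem~\ref{Thm1} this gives SPR. The key point is that $\ell_\infty(\alpha)$ is enormous, so I can afford to encode extra ``mixing'' coordinates. The obvious obstruction to SPR for a subspace $E$ is a pair $f,g\in S_E$ with $|f|\wedge|g|$ small in norm; in $\ell_\infty$, $\||f|\wedge|g|\|_\infty$ is the supremum over coordinates of $\min(|f(i)|,|g(i)|)$, so I must arrange that for \emph{every} pair $f,g$ of norm-one elements of the image there is some coordinate where both are simultaneously large.

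\textbf{Construction.} The plan is to split the index set: write the target as $\ell_\infty(\alpha)\cong \ell_\infty\bigl(\alpha \sqcup S\bigr)$ where $S$ is the set of all two-element subsets (or ordered pairs) of $\alpha$ — note $|S|\le\alpha$ when $\alpha$ is infinite, and the finite case is handled by a direct compactness argument or separately. For $x\in\ell_\infty(\alpha)$ define
\begin{equation}
    (Jx)(i) = x(i) \ \ (i\in\alpha), \qquad (Jx)(\{i,j\}) = \tfrac12\bigl(x(i)+x(j)\bigr).
\end{equation}
Then $\|Jx\|_\infty = \max\bigl(\sup_i|x(i)|,\ \sup_{\{i,j\}}\tfrac12|x(i)+x(j)|\bigr) = \|x\|_\infty$ since the averaging coordinates never exceed the sup of the originals, so $J$ is a linear isometry. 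It remains to check the image has no almost disjoint pairs. Take $u=Jx$, $v=Jy$ with $\|u\|_\infty=\|v\|_\infty=1$. Pick $i$ with $|x(i)|$ close to $1$ and $j$ with $|y(j)|$ close to $1$. If it happens that $x(i)$ and $y(i)$ already have comparable magnitude, or similarly at $j$, we are done on an original coordinate; otherwise, say $|y(i)|$ is small and $|x(j)|$ is small, and then on the mixing coordinate $\{i,j\}$ we have $|u(\{i,j\})|=\tfrac12|x(i)+x(j)|\ge \tfrac12(|x(i)|-|x(j)|)$ which is bounded below, and likewise $|v(\{i,j\})|=\tfrac12|y(i)+y(j)|$ is bounded below; so $\min(|u|,|v|)$ is bounded below at $\{i,j\}$. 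A careful case analysis gives a uniform lower bound $\varepsilon_0>0$ on $\||u|\wedge|v|\|_\infty$ independent of $u,v$, hence SPR.

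\textbf{Where the difficulty lies.} The main obstacle is the case analysis in the last step: I must handle signs (over $\R$, or phases over $\C$) carefully, since $x(i)+x(j)$ could suffer cancellation. Over $\R$ this is manageable — if $|x(i)|\ge\tfrac12$ and $|x(j)|\le\tfrac14$ then $|x(i)+x(j)|\ge\tfrac14$ regardless of signs — but one must check that the ``bad'' scenario forcing cancellation in $u$ is incompatible with simultaneously forcing cancellation in $v$ at the same index, given that we get to choose $i$ from $x$ and $j$ from $y$. The cleanest route is probably to fix $\delta$ small, choose $i$ with $|x(i)|>1-\delta$, and then argue: if $|y(i)|>\delta$ we win at $i$; if $|y(i)|\le\delta$, pick $j$ with $|y(j)|>1-\delta$ (necessarily $j\ne i$), and if also $|x(j)|>\delta$ we win at $j$ while $|y|$ is large there; the only remaining case is $|y(i)|\le\delta$ and $|x(j)|\le\delta$, where both $|u(\{i,j\})|$ and $|v(\{i,j\})|$ exceed roughly $\tfrac12(1-2\delta)$, so $\||u|\wedge|v|\|_\infty\gtrsim\tfrac12$. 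Tracking the constant through gives an explicit SPR bound, and the complex case follows the same skeleton with $|a|-|b|$ replaced by the triangle inequality for complex numbers.
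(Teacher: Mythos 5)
Your construction is correct for infinite $\alpha$, which is the lemma's intended scope; one aside should be dropped, namely that the finite case can be "handled separately": for finite $\alpha\geq 2$ an into isometry of $\ell_\infty^n$ is automatically onto, and the full space contains disjoint pairs, so no isometric SPR self-embedding exists there (the paper's own proof also needs $\alpha$ infinite, since it requires a dense subset of $S_{\ell_1(\alpha)}$ of cardinality $\alpha$). Your route is genuinely different from the paper's. The paper fixes a dense family $(f_i)_{i\in I}$, $|I|=\alpha$, in the unit sphere of the predual $\ell_1(\alpha)$ and maps $x\mapsto (f_i(x))_{i\in I}$; SPR of the image is then obtained from \Cref{l:functionals} (for any two norm-one vectors there is a norm-one functional whose modulus is at least $1/5$ at both) combined with Goldstine's theorem, yielding $\||Jx|\wedge|Jy|\|\geq 1/5$ and hence $10$-SPR. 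You instead keep the original coordinates and adjoin, for each unordered pair $\{i,j\}$, the averaging coordinate $\tfrac12(x(i)+x(j))$; the averaged coordinates never exceed the sup, so $J$ is isometric, and your three-case analysis (with $\delta=1/4$: win at $i$, win at $j$, or win at $\{i,j\}$ via the reverse triangle inequality) gives $\||Jx|\wedge|Jy|\|\geq 1/4$ for all norm-one $x,y$, hence $8$-SPR by \Cref{Thm1} — a slightly better constant, an entirely explicit argument needing neither Hahn--Banach, Goldstine, nor \Cref{l:functionals}, and one that passes verbatim to complex scalars. What the paper's functional-based argument buys in exchange is generality: \Cref{l:functionals} applies to an arbitrary Banach space and directly yields \Cref{r:emb into C of ball} (the canonical embedding of $E$ into $C(B_{E^*})$ is SPR), which is reused later for the $C(K)$ results, whereas your averaging trick is tailored to a space with a distinguished coordinate system. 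Your lemma still implies \Cref{into l infty} exactly as in the paper, since the bound $1/4$ holds for all norm-one $u,v\in\ell_\infty(\alpha)$ and hence on any isometric copy of a given Banach space sitting inside it.
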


To prove \Cref{l infty to l infty}, we rely on the following.

\begin{lemma}\label{l:functionals}
 Suppose $E$ is a $($real or complex$)$ Banach space, and $x, y \in E$ have norm $1$. Then there exists a norm $1$ functional $f \in E^*$ so that $|f(x)| \wedge |f(y)| \geq 1/5$.
\end{lemma}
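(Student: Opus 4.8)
The plan is to reduce the problem to a two-dimensional statement and then argue by elementary geometry, treating the real and complex cases uniformly where possible. Let $x,y\in E$ have norm $1$. First I would dispose of the trivial case: if $x$ and $y$ are linearly dependent, then $y=\mu x$ for some unimodular $\mu$ (since both have norm $1$), and any norm $1$ functional $f$ attaining $f(x)=1$ does the job, as $|f(y)|=|f(x)|=1\geq 1/5$. So assume $x,y$ are linearly independent and work inside the two-dimensional subspace $Y=\operatorname{span}\{x,y\}$; by Hahn--Banach it suffices to find a norm $1$ functional on $Y$ with the desired property, since it extends without increasing the norm. Thus we are reduced to: given two unit vectors $x,y$ in a two-dimensional normed space $Y$ (real or complex), find $\varphi\in S_{Y^*}$ with $|\varphi(x)|\wedge|\varphi(y)|\geq 1/5$.

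The key idea is to consider the midpoint-type vector $z=x+y$ (or, if $\|x+y\|$ is too small, $z=x-y$ or more generally $x+\mu y$ for a suitable unimodular $\mu$; in the real case just pick whichever of $x+y$, $x-y$ has larger norm, so $\|z\|\geq 1$, and in the complex case choose $\mu$ with $\|x+\mu y\|\geq \sqrt2$ by an averaging argument over the circle, or at least $\|z\|\geq 1$). Then pick a norm $1$ functional $\varphi$ with $\varphi(z)=\|z\|$. I would then estimate $|\varphi(x)|$ and $|\varphi(y)|$ from below: writing $a=\varphi(x)$ and $b=\varphi(y)$ (after the unimodular adjustment absorbing into $y$), we know $a+b=\|z\|\geq 1$ while $|a|,|b|\leq 1$. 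This already forces $|a|+|b|\geq 1$, hence one of them, say $|a|$, is $\geq 1/2$; the remaining work is to get a lower bound on the other one, $|b|$. For this I would use a perturbation: if $|b|$ were very small, replace $\varphi$ by $\psi=(1-t)\varphi+t\eta$ where $\eta\in S_{Y^*}$ attains $\eta(y)=1$, and optimize $t\in[0,1]$; since $\|\psi\|\leq 1$, and $\psi(y)=(1-t)b+t$ can be pushed up while $\psi(x)=(1-t)a+t\eta(x)$ stays controlled in modulus (using $|\eta(x)|\leq 1$), a suitable choice of $t$ (the calculation should land on something like $t$ of order $1/3$ giving both moduli $\geq 1/5$) yields the bound. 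The constant $1/5$ is presumably exactly what falls out of balancing the two competing estimates.

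The main obstacle I anticipate is the complex case: there the cancellation $a+b$ can occur with a genuine phase, so the naive choice $z=x+y$ need not have norm bounded below, and one must be careful that the unimodular rotation used to make $\|x+\mu y\|$ large does not later conflict with the rotation one wants when passing back to $|f(y)|$ (but note $|f(\mu y)|=|f(y)|$, so this is actually harmless — the modulus is rotation-invariant, which is why the statement is phrased with $|f(y)|$ rather than $f(y)$). So the real subtlety is purely the lower bound on $\|x+\mu y\|$ for the best $\mu$, which by a standard $L_2$-averaging over $\mu=e^{i\theta}$ gives $\sup_\theta\|x+e^{i\theta}y\|\geq(\|x\|^2+\|y\|^2)^{1/2}=\sqrt2$ once one embeds $Y$ into a Hilbert space $2$-isomorphically — or, more self-containedly, one observes $\mathbb{E}_\theta\|x+e^{i\theta}y\|^2\geq \mathbb{E}_\theta|\varphi(x)+e^{i\theta}\varphi(y)|^2=|\varphi(x)|^2+|\varphi(y)|^2$ is not quite what's needed; instead I would simply take $\mu$ so that $\varphi_0(x)$ and $\mu\varphi_0(y)$ are aligned for a fixed reference functional $\varphi_0$ attaining $\varphi_0(x)=1$, giving $\|x+\mu y\|\geq \operatorname{Re}\varphi_0(x+\mu y)=1+|\varphi_0(y)|$, and then split into cases according to whether $|\varphi_0(y)|\geq 1/5$ (done immediately, take $f=\varphi_0$) or $|\varphi_0(y)|<1/5$ (then $\|x+\mu y\|$ could still be close to $1$, so run the perturbation argument of the previous paragraph). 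Assembling these cases with the right constants is the bulk of the remaining routine work.
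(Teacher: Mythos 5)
Your argument is correct, and it reaches the same constant $1/5$ by a genuinely different route than the paper. The paper splits according to whether $\mathrm{dist}(y,\mathbb{F}x)\leq 2/5$ (in which case $y\approx tx$ with $|t|\geq 3/5$ and a norming functional of $x$ works) or both distances exceed $2/5$ (in which case Hahn--Banach yields norm-one $g,h$ with $g(x)\geq 2/5$, $g(y)=0$, $h(y)\geq 2/5$, $h(x)=0$, and $f=(g+h)/\|g+h\|$ works). You instead choose a unimodular $\mu$ with $\|x+\mu y\|\geq 1$, take $\varphi$ norming $z=x+\mu y$, so that $a=\varphi(x)$, $b=\varphi(\mu y)$ satisfy $a+b=\|z\|\geq 1$, and then either $\min(|a|,|b|)\geq 1/5$ already, or (say $|b|<1/5$, whence $|a|>4/5$) you pass to $\psi=\tfrac{2}{3}\varphi+\tfrac{1}{3}\eta$ with $\eta$ norming $y$, which gives $|\psi(x)|\geq \tfrac{2}{3}|a|-\tfrac13>\tfrac15$ and $|\psi(y)|\geq \tfrac13-\tfrac23|b|>\tfrac15$; the weight $t=1/3$ indeed balances exactly at $1/5$. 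The two proofs are of comparable length; the paper's uses annihilating functionals obtained from the distance hypothesis, while yours avoids distance-to-line considerations entirely and replaces them with a single convex perturbation of a norming functional of $x+\mu y$, which is arguably more uniform across the two cases. Two small points to tighten: (1) drop the circle-averaging and $\sqrt{2}$ discussion in the complex case --- over either scalar field $2=\|2x\|\leq\|x+y\|+\|x-y\|$, so $\mu\in\{1,-1\}$ already gives $\|z\|\geq 1$ (and $|f(\mu y)|=|f(y)|$ makes the phase harmless, as you note); (2) state explicitly that you replace $\psi$ by $\psi/\|\psi\|$ at the end: since $\|\psi\|\leq 1$ this only increases both moduli, and $\|\psi\|>0$ because $|\psi(y)|>0$, so the lemma's requirement of a norm-one functional is met.
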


\begin{proof}
Suppose first that ${\mathrm{dist}} \, (y , \F x) \leq 2/5$ (here $\F$ is either $\R$ or $\C$). Find $t\in \F$ so that $\|y - tx\| \leq 2/5$. By the triangle inequality, $|t| \geq 3/5$. Find $f \in E^*$ so that $\|f\| = 1 = f(x)$. Then $|f(y)| \geq |t| |f(x)| - \|y-tx\| \geq 1/5$. The case of ${\mathrm{dist}} \, (x , \F y) \leq 2/5$ is handled similarly.
\\

Now suppose ${\mathrm{dist}} \, (x , \F y), {\mathrm{dist}} \, (y , \F x) > 2/5$. By Hahn-Banach Theorem, there exist norm one $g, h \in E^*$ so that $g(x) \geq 2/5$, $g(y) = 0$, $h(y) \geq 2/5$, and $h(x) = 0$. Then $f := (g+h)/\|g+h\|$ has the desired properties. Indeed, $\|g+h\| \leq 2$, hence
$$
|f(x)| \geq \frac12 \big( |g(x)| - |h(x)| \big) \geq \frac15 ,
$$
and likewise, $|f(y)| \geq 1/5$.
\end{proof}

\begin{proof}[Proof of \Cref{l infty to l infty}]
For the sake of brevity, we shall use the notation $E = \ell_\infty(\alpha)$, and $E_* = \ell_1(\alpha)$.
Pick a dense set $(f_i)_{i \in I}$ in $S_{E_*}$, with $|I| = \alpha$. 
Define an isometric embedding $J : E \to \ell_\infty(I) : x \mapsto (f_i(x))_{i \in I}$. We shall show that, for every $x, y \in S_E$ and $\varepsilon>0$, there exists $i$ so that $|f_i(x)| \wedge |f_i(y)| \geq 1/5-\varepsilon$. Once this is done, we will conclude that $\| |Jx| \wedge |Jy| \| \geq 1/5$ for any $x, y \in S_E$, which by \Cref{Thm1} tells us that $J$ is indeed an SPR embedding.
\\

By \Cref{l:functionals}, there exists $f \in S_{E^*}$ so that $|f(x)| \wedge |f(y)| \geq 1/5$. By Goldstine's Theorem, there exists $f' \in S_{E_*}$ so that $|f'(x)| \wedge |f'(y)| \geq 1/5-\varepsilon/2$. Find $i$ so that $\|f' - f_i\| \leq \varepsilon/2$. Then
$$
|f_i(x)| \wedge |f_i(y)| \geq |f'(x)| \wedge |f'(y)| - \|f' - f_i\| \geq \frac{1}{5}-\varepsilon,
$$
which proves our claim.
\end{proof}

\begin{remark}\label{r:emb into C of ball}
We can define the canonical embedding of $E$ into $C(B_{E^*})$ (with $B_{E^*}$ equipped with its weak$^*$ topology) by sending $e \in E$ to the function $e^* \mapsto e^*(e)$. The above reasoning shows that this embedding is SPR. For separable $E$, more can be said -  see  \Cref{p:separble into CK} below. 
\end{remark}



\begin{remark}\label{c0}
If an atomic lattice is order continuous (which $\ell_\infty$ of course is not), then the ``gliding hump'' argument shows the non-existence of infinite dimensional dispersed subspaces. The lattice $c$ is not order continuous, but it has no infinite dimensional dispersed subspaces. This is because $c$ contains $c_0$ as a subspace of finite codimension, hence any infinite dimensional subspace of $c$ has an infinite dimensional intersection with $c_0$. 
\end{remark}

Combining the results from  this and the previous subsection, we see that, often, the collection of dispersed subspaces of a Banach lattice coincides with those that do SPR, up to isomorphism (cf.~\Cref{Q1} below). Indeed, we have the following:
\begin{corollary}\label{cor clark}
For every dispersed subspace $E\subseteq L_p[0,1]$ ($1\leq p\leq \infty)$, there exists a closed subspace $E'\subseteq L_p[0,1]$ isomorphic to $E$, and doing stable phase retrieval. The same result holds with $L_p[0,1]$ replaced by $C[0,1]$, $C(\Delta),$  $c$ or any order continuous atomic Banach lattice.
\end{corollary}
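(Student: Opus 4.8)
The plan is to reduce, in each case, the problem of building an isomorphic SPR copy of a dispersed subspace $E$ to one of the existing tools: \Cref{Thm1} (SPR $\Leftrightarrow$ no almost disjoint pairs), the Kadec--Pelczynski machinery of \Cref{summary}, and the embedding results from \Cref{ss:SPR embeddings}. First I would dispose of the $C(K)$-type cases ($C[0,1]$, $C(\Delta)$, $c$). A dispersed subspace of any of these is in particular a separable Banach space, so it embeds isometrically into $\ell_\infty$ and, by \Cref{r:emb into C of ball}, into $C(B_{E^*})$ in an SPR fashion. The task is to transport this into $C[0,1]$ or $C(\Delta)$: since $\Delta$ is the Cantor set, $C(\Delta)$ is universal for separable Banach spaces (Banach--Mazur), so the composition of an SPR embedding into some $C(K_0)$ with a suitable isometric embedding $C(K_0)\hookrightarrow C(\Delta)$ — arising from a continuous surjection $\Delta\twoheadrightarrow K_0$, which preserves moduli because composition with a continuous map is a lattice homomorphism — yields an SPR copy of $E$ in $C(\Delta)$, hence in $C[0,1]$ by the standard isometric lattice embedding $C(\Delta)\hookrightarrow C[0,1]$. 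For $c$: by \Cref{c0}, $c$ has no infinite dimensional dispersed subspaces, so a dispersed $E\subseteq c$ is finite dimensional, where phase retrieval is automatic and every finite dimensional space embeds SPR-ly (e.g. via \Cref{l:functionals}-type arguments, or trivially since we only need \emph{some} copy).

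Next, the $L_p[0,1]$ case for $1\le p<\infty$. Given a dispersed $E\subseteq L_p[0,1]$, by \Cref{summary}(b)/(c)/(d) the norms $\|\cdot\|_{L_p}$ and $\|\cdot\|_{L_q}$ are equivalent on $E$ for, say, $q\in(\max(p/2,1),\min(p,2))$ or more simply for some $q<p$ with $q\le 2$; in particular $E$ embeds isomorphically into an $L_q$-space. The idea is then to exploit \Cref{rem-clark} and \Cref{NCOR}: when $1\le p<2$ there is an SPR subspace of $L_p[0,1]$ isometric to $L_2[0,1]$ (or to $L_q[0,1]$ for the relevant $q$), and $L_q[0,1]$ (resp.\ $L_2$) contains an isomorphic copy of $E$; composing, we land an isomorphic copy of $E$ inside that SPR subspace, and a subspace of an SPR subspace is SPR. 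When $p\ge 2$, a dispersed subspace of $L_p[0,1]$ is isomorphic to a Hilbert space by \Cref{summary}(ii), and by \Cref{NCOR} (case $q=2<p$) $L_p[0,1]$ has an SPR subspace isometric to $L_2[0,1]\supseteq \ell_2$; again a subspace of that copy isomorphic to $E\cong\ell_2$ does SPR. The remaining case $p=\infty$ is handled directly by \Cref{into l infty}: $L_\infty[0,1]$ is a $C(K)$-space, so it embeds into some $\ell_\infty(\alpha)$ and hence (by the same SPR-embedding-into-$\ell_\infty$ construction) any dispersed — indeed any — subspace has an isomorphic SPR copy; alternatively $L_\infty[0,1]\cong C(K)$ for a suitable $K$ and one argues as in the $C(K)$ cases. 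Finally, the general statement for an order continuous atomic Banach lattice $X$: such $X$ has no infinite dimensional dispersed subspaces by the gliding-hump remark (\Cref{c0}), so any dispersed $E\subseteq X$ is finite dimensional and we are again in the easy regime.

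The main obstacle I anticipate is bookkeeping the two halves of the $L_p[0,1]$ argument so that the $L_q$-norm equivalence from \Cref{summary} is matched to a $q$ for which an \emph{isometric} (or even just $(1+\varepsilon)$-isomorphic) SPR copy of $L_q$ sits inside $L_p$; one must check that the constant loss incurred by passing from $\|\cdot\|_{L_p}$ to $\|\cdot\|_{L_q}$ on $E$ does not interfere with the SPR constant — it does not, because SPR of a subspace passes to all its subspaces with the same constant, and isomorphisms only rescale the constant. A secondary nuisance is verifying that the various $C(K)$ embeddings used to route a separable SPR copy into $C[0,1]$ and $C(\Delta)$ genuinely preserve moduli (they do: pullback along a continuous surjection is a lattice isometry), so that "SPR into $C(K_0)$" really transfers to "SPR into $C(\Delta)$". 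None of these steps is deep; the content is entirely in assembling \Cref{Thm1}, \Cref{summary}, \Cref{rem-clark}, \Cref{NCOR}, \Cref{into l infty}, and \Cref{c0} in the right order.
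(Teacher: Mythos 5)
The main gap is in the case $1\le p<2$ of $L_p[0,1]$. From \Cref{summary} you only get that $\|\cdot\|_{L_p}\sim\|\cdot\|_{L_q}$ on $E$ for $q<p$, i.e.\ an embedding of $E$ into $L_q$ for \emph{smaller} $q$ — but this is the wrong direction: \Cref{NCOR} supplies SPR copies of $L_r[0,1]$ inside $L_p[0,1]$ only for $r\in(p,2]$, and for $q<p\le 2$ the space $L_q[0,1]$ does not embed into $L_p[0,1]$ at all (type considerations rule out uniform copies of $\ell_q^n$), so there is no SPR copy of $L_q$ in $L_p$ through which your embedding could be routed. Your fallback claim that ``$L_2[0,1]$ contains an isomorphic copy of $E$'' is false in general: by \Cref{summary}(iii), or by the $q$-stable construction behind \Cref{p:SPR in Lorentz}, $L_p[0,1]$ with $p<2$ contains dispersed subspaces isometric to $L_q[0,1]$ or $\ell_q$ with $p<q<2$, and these are not isomorphic to any subspace of $L_2$, since every closed subspace of $L_2$ is Hilbertian. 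The missing ingredient is exactly what the paper invokes at this point, namely Rosenthal's theorem \cite{MR312222}: a subspace of $L_p$, $1\le p<2$, containing no copy of $\ell_p$ (equivalently, by \Cref{summary}, dispersed) is isomorphic to a subspace of $L_r$ for some $r\in(p,2]$; only with this upward gain in integrability does \Cref{NCOR} produce an SPR copy of $L_r$ inside $L_p$ into which $E$ can be placed. This upward jump is genuinely nontrivial (compare the $\Lambda(p)\Rightarrow\Lambda(p+\varepsilon)$ phenomenon recalled in \Cref{Lambda p recap}) and cannot be extracted from the Kadec--Pelczynski equivalences you cite.

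The remaining cases are essentially in line with the paper, with smaller slips. For $p>2$ your argument is the paper's; at $p=2$ \Cref{NCOR} does not literally apply, but $L_2$ does contain SPR copies of $\ell_2$ (Gaussians, \Cref{p:SPR in Lorentz}), which is also how the paper's wording should be read. The $C[0,1]$ and $C(\Delta)$ cases are exactly \Cref{p:separble into CK}; note the paper passes from $C(\Delta)$ to $C[0,1]$ via a positive unital isometric \emph{extension operator}, which is not a lattice homomorphism but satisfies $\||Tx|\wedge|Ty|\|\ge\||x|\wedge|y|\|$, and only this one-sided inequality is needed — your ``standard isometric lattice embedding $C(\Delta)\hookrightarrow C[0,1]$'' is not the standard map and would require its own construction. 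For $p=\infty$ your sketch stops after embedding $E$ into $\ell_\infty(\alpha)$ in an SPR way; you still must land the copy back inside $L_\infty[0,1]$, which the paper does by combining the Banach space isomorphism $L_\infty[0,1]\simeq\ell_\infty$ with a lattice isometric copy of $\ell_\infty$ inside $L_\infty[0,1]$ and \Cref{l infty to l infty}. Finally, your remark that phase retrieval is ``automatic'' in finite dimensions is false (a span of two disjoint vectors fails it); for $c$ and order continuous atomic lattices the paper simply notes, via \Cref{c0}, that there are no infinite dimensional dispersed subspaces, so nothing needs to be constructed there.
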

\begin{proof}
By \Cref{summary}, for $1\leq p<\infty$ and $p\neq 2$, a closed subspace of $L_p[0,1]$ is dispersed if and only if it contains no subspace  isomorphic to $\ell_p$.  A result of Rosenthal \cite{MR312222} states that for $1\leq p<2$, a subspace of $L_p[0,1]$ that does not contain $\ell_p$ must be isomorphic to a subspace of $L_r$ for some $r\in (p,2]$. By \Cref{NCOR}, one can build an SPR copy of $L_r$ in $L_p$. 
\\

In the case $2\leq p<\infty,$ \Cref{summary} states that any dispersed subspace of $L_p[0,1]$ must be isomorphic to a Hilbert space. By \Cref{NCOR}, $L_p[0,1]$ contains an SPR copy of $\ell_2$.
To deal with the case $p=\infty$, note that $L_\infty[0,1]$ is isomorphic (as a Banach space) to $\ell_\infty$, and use \Cref{l infty to l infty} together with the fact that $\ell_\infty$ lattice isometrically embeds in $L_\infty[0,1]$.
\\

For order continuous atomic lattices and $c$, there are no infinite dimensional dispersed subspaces by \Cref{c0}. The claim for $C[0,1]$ and $C(\Delta)$ will be proven in \Cref{p:separble into CK} below, when we analyze SPR subspaces of $C(K)$-spaces. As we will see in the proof of \Cref{p:separble into CK}, the fact that every separable Banach space embeds into  $C[0,1]$ and $C(\Delta)$ in an SPR fashion ultimately follows from \Cref{r:emb into C of ball}.
\end{proof}

\subsection{Explicit constructions of SPR subspaces using random variables}\label{s:intro}
In this subsection, we construct SPR subspaces of a rather general class of function spaces by considering the closed span of certain independent random variables.  The use of sub-Gaussian random vectors has been widely successful in building random frames for finite dimensional Hilbert spaces which do stable phase retrieval whose stability bound is independent of the dimension \cite{MR3260258,MR3069958,MR3175089,KS,MR3746047}.  However,  different distributions for random variables will allow for the construction of subspaces which do stable phase retrieval and are not isomorphic to Hilbert spaces.
We begin by presenting a technical criterion for SPR.

\begin{proposition}\label{p:criterion-abstr}
 Suppose $X$ is a Banach lattice of measurable functions on a probability measure space $(\Omega,\mu)$ which contains the indicator functions and has the property that for every $\vp>0$ there exists $\delta = \delta(\vp) > 0$ so that $\|\chi_S\| > \delta$ whenever $\mu(S) > \vp$. Suppose, furthermore, that $E$ is a subspace of $X$, which has the following property: There exist $\alpha > 1/2$ and $\beta > 0$ so that, for any norm one $f\in E$, we have
 \begin{equation}
  \mu \left(\big\{ \omega \in \Omega : |f(\omega)| \geq \beta \big\}\right) \geq \alpha .
 \end{equation}
 Then $E$ is an SPR-subspace.
\end{proposition}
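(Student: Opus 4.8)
The plan is to verify the hypothesis of \Cref{Thm1}(ii): namely, to produce $\varepsilon > 0$ so that $E$ contains no $\varepsilon$-almost disjoint pairs, from which $C$-SPR follows with $C = 2/\varepsilon$. So fix norm-one $f, g \in E$ and try to bound $\||f|\wedge|g|\|$ from below. By hypothesis, the sets $A = \{|f| \geq \beta\}$ and $B = \{|g| \geq \beta\}$ each have measure at least $\alpha > 1/2$. Therefore $\mu(A \cap B) \geq 2\alpha - 1 > 0$. On $A \cap B$ we have $|f| \wedge |g| \geq \beta$, so $|f|\wedge|g| \geq \beta \chi_{A\cap B}$, and hence $\||f|\wedge|g|\| \geq \beta \|\chi_{A\cap B}\|$. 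Now invoke the other structural hypothesis on $X$: since $\mu(A\cap B) > 2\alpha - 1 =: \varepsilon_0 > 0$, we get $\|\chi_{A\cap B}\| > \delta(\varepsilon_0)$. Thus $\||f|\wedge|g|\| > \beta\,\delta(\varepsilon_0) =: \varepsilon$, a strictly positive constant independent of $f$ and $g$. By \Cref{Thm1}(ii), $E$ does $\frac{2}{\beta\delta(\varepsilon_0)}$-stable phase retrieval.

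A couple of small points need care. First, the inequality $\mu(A) + \mu(B) - \mu(A\cup B) = \mu(A\cap B)$ together with $\mu(A\cup B) \leq 1$ gives $\mu(A\cap B) \geq \mu(A) + \mu(B) - 1 \geq 2\alpha - 1$, which is where $\alpha > 1/2$ is used — this is exactly the reason that threshold appears in the statement. Second, one should check that $\chi_{A\cap B}$ genuinely lies in $X$ and that the norm comparison makes sense; this is guaranteed because $X$ contains the indicator functions by assumption, and $|f|\wedge|g|$ is an element of the lattice $X$ with $|f|\wedge|g| \geq \beta\chi_{A\cap B} \geq 0$, so lattice monotonicity of the norm applies. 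Third, a purely formal remark: \Cref{Thm1} as stated is for real Banach lattices, but the modulus $|f|\wedge|g|$ and the bound above make sense verbatim in the complex case too (the measurable-function representation from \Cref{CBL} reduces everything to pointwise statements), and in any case the direct estimate $\||f|\wedge|g|\| \geq \beta\delta(\varepsilon_0)$ combined with implication (ii) of \Cref{Thm1} — whose proof of (ii)$\Rightarrow$(i) is the relevant direction — goes through; if one wants to be safe one simply cites the real case after passing to the real lattice underlying $X_{\mathbb C}$.

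I do not anticipate a genuine obstacle here: the argument is a two-line measure-theoretic pigeonhole ($\mu(A\cap B) \geq 2\alpha - 1$) followed by a direct application of the almost-disjoint-pairs criterion. The only thing to be vigilant about is bookkeeping of constants and making sure the quantity $\delta(2\alpha - 1)$ is applied with the correct argument (one has $\mu(A\cap B) \geq 2\alpha-1$, but to apply the hypothesis one needs strict inequality $\mu(A\cap B) > \varepsilon$; this is harmless since one can take $\varepsilon$ slightly below $2\alpha - 1$, e.g. $\varepsilon = \alpha - 1/2 < 2\alpha - 1$, and use $\delta(\alpha - 1/2)$). This yields the explicit SPR constant $C = \tfrac{2}{\beta\,\delta(\alpha - 1/2)}$.
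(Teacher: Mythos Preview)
Your proof is correct and follows essentially the same approach as the paper: inclusion--exclusion gives $\mu(\{|f|\geq\beta\}\cap\{|g|\geq\beta\})\geq 2\alpha-1$, whence $\||f|\wedge|g|\|\geq\beta\,\delta(2\alpha-1)$, and \Cref{Thm1} finishes. Your attention to the strict-vs-nonstrict inequality in the hypothesis on $\delta(\cdot)$ is a nice touch that the paper's proof glosses over.
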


\begin{proof}
 Suppose $f, g \in E$ have norm $1$. By the Inclusion-Exclusion Principle,
 $$
 \mu\left( \big\{ \omega \in \Omega : |f(\omega)| \geq \beta , |g(\omega)| \geq \beta  \big\}\right) \geq 2 \alpha - 1 .
 $$
 Thus, $\| |f| \wedge |g| \| \geq \beta \delta(2\alpha-1)$.
\end{proof}

The above proposition is applicable, for instance, when $X$ is a rearrangement invariant (r.i.~for short; see \cite{LT2} for an in-depth treatment) space on $(0,1)$, equipped with the canonical Lebesgue measure $\lambda$. Examples include $L_p$ spaces, and, more generally, Lorentz and Orlicz spaces (once again, described in great detail in \cite{LT2}; for Lorentz spaces, see also \cite{Dilworth-handbook}). Below we describe some SPR subspaces, spanned by independent identically distributed random variables.  
\\

Suppose $f$ is a random variable, realized as a measurable  function on $(0,1)$ (with the usual Lebesgue measure $\lambda$). Then independent copies of $f$ -- denoted by $f_1, f_2, \ldots$ -- can be realized on $((0,1),\lambda)^{\aleph_0}$. By Caratheodory's Theorem (see e.g.~\cite[p.~121]{Lacey74}), there exists a measure-preserving bijection between $((0,1),\lambda)^{\aleph_0}$ and $((0,1),\lambda)$. Therefore, we view $f_1, f_2, \ldots$ as functions on $(0,1)$.
\\

Suppose now that, in the above setting, the following statements hold:
\begin{enumerate}
    \item $f$ belongs to $X$, and has norm one in that space;
    \item There exists $r$ so that, if $f_1, \ldots, f_n$ are independent copies of $f$, and $\sum_i |a_i|^r = 1$, then $\sum_i a_i f_i$ is equidistributed with $f$;
    \item There exists $\beta > 0$ so that ${\mathbb{P}}(|f| > \beta) > 1/2$.
\end{enumerate}
In this situation, if $f_1, f_2, \ldots$ are independent copies of $f$ (viewed as elements of $X$, per the preceding paragraph), then $\overline{\spn}[f_i : i \in \N]$ is an SPR copy of $\ell_r$ in $X$.
\\

We should mention two examples of random variables with the above properties: Gaussian ((ii) holds with $r=2$) and $q$-stable ($q \in (1,2)$; (ii) holds with $r=q$). The details  can be found in \cite[Section 6.4]{AK}. For the Gaussian variables, the probability density function is $d_f(x) = c e^{-x^2/2}$, with $c$ depending on the normalization. For the $q$-stable variables with characteristic function $t \mapsto c e^{-|t|^q}$ (with $c$ ensuring normalization), the Fourier inversion formula gives the density function
$$
d_f(x) = \frac{c}\pi \int_0^\infty \cos(tx) e^{-t^q} \, dt .
$$
In both cases, $d_f$ is continuous (in the latter case, due to Dominated Convergence Theorem), hence there exists $\beta > 0$ so that 
$$\mathbb{P}(|f| > \beta) = 1 - \int_{-\beta}^\beta d_f > \frac34.$$ 

It is known that Gaussian random variables belong to $L_p$ for $p \in [1,\infty)$, while the $r$-stable random variables ($1 < r < 2$) lie in $L_p$ if and only $p \in [1,r)$. Moreover, the results from \cite[p.~142-143]{LT2} tell us that $L_s(0,1) \subset L_{p,q}(0,1)$ for $s > p$ (this is a continuous inclusion, not an isomorphic embedding). If $r > p$, then the $r$-stable variables belong to $L_{p,q}(0,1)$ (indeed, take $s \in (p,r)$; then the $r$-stable variables live in $L_s(0,1)$, which in turn sits inside of $L_{p,q}(0,1)$). Likewise, one shows that any Lorentz space $L_{p,q}(0,1)$ contains Gaussian random variables.
\\

The above reasoning implies:

\begin{proposition}\label{p:SPR in Lorentz}
Suppose $1 \leq p < \infty$ and $1 \leq q \leq \infty$ $($when $p=1$, assume in addition $q<\infty)$. Then $L_{p,q}(0,1)$ contains a copy of $\ell_2$ that does SPR. If, in addition, $1 \leq p < r < 2$, then $L_{p,q}(0,1)$ contains a copy of $\ell_r$ that does SPR. 
\end{proposition}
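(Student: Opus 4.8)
The plan is to deduce \Cref{p:SPR in Lorentz} directly from the general machinery developed just above it, namely \Cref{p:criterion-abstr} together with the structural facts about Lorentz spaces recalled in the surrounding text. The key point is that $X = L_{p,q}(0,1)$ with the Lebesgue measure $\lambda$ satisfies the hypotheses of \Cref{p:criterion-abstr}: it is a Banach lattice of measurable functions on a probability space, it contains the indicator functions, and for every $\vp > 0$ there is $\delta(\vp) > 0$ with $\|\chi_S\|_{L_{p,q}} > \delta(\vp)$ whenever $\lambda(S) > \vp$. This last property is immediate from the explicit formula for $\|\chi_S\|_{L_{p,q}}$, which depends only on $\lambda(S)$ and is increasing in it (when $p=1$ one needs $q<\infty$ for $L_{1,q}$ to be a genuine quasi-Banach/Banach lattice with the required normalization behaviour, which is exactly the excluded case flagged in the statement).

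Next I would produce the random variables. For the $\ell_2$ statement, take $f$ a standard Gaussian, normalized so that $\|f\|_{L_{p,q}} = 1$; this is legitimate because, as recalled in the text, every Lorentz space $L_{p,q}(0,1)$ with $1 \le p < \infty$ contains Gaussian random variables (via the inclusion $L_s(0,1) \subset L_{p,q}(0,1)$ for $s > p$, combined with Gaussians lying in every $L_s$). For the $\ell_r$ statement with $1 \le p < r < 2$, take $f$ an $r$-stable random variable normalized in $L_{p,q}$; again the text observes $r$-stable variables lie in $L_{p,q}(0,1)$ when $r > p$, by choosing an intermediate $s \in (p,r)$ and using $L_s(0,1) \subset L_{p,q}(0,1)$. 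In both cases, properties (i)--(iii) from the displayed list hold: (i) normalization, (ii) the stability identity (with $r=2$ for Gaussians, $r$ for $r$-stable; see \cite[Section 6.4]{AK}), and (iii) the existence of $\beta > 0$ with $\mathbb{P}(|f| > \beta) > 3/4$, which follows from continuity of the density function $d_f$ together with $d_f \in L_1$.

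With these in hand, let $f_1, f_2, \ldots$ be independent copies of $f$, realized on $(0,1)$ via the measure-preserving bijection from $((0,1),\lambda)^{\aleph_0}$ to $((0,1),\lambda)$ as in the text, and set $E = \overline{\spn}[f_i : i \in \N]$. I would check the hypothesis of \Cref{p:criterion-abstr} with $\alpha > 1/2$ and $\beta$ as above: given a norm-one element $h = \sum_i a_i f_i \in E$, property (ii) (equidistribution after normalizing $\sum_i |a_i|^r = 1$) forces $h$ to be equidistributed with a scalar multiple of $f$; since $\|h\|_{L_{p,q}} = 1 = \|f\|_{L_{p,q}}$ and the norm depends only on the distribution, $h$ is in fact equidistributed with $f$ itself, so $\lambda(\{|h| \ge \beta\}) = \mathbb{P}(|f| \ge \beta) > 3/4 =: \alpha$. (A limiting argument handles general $h \in E$ as opposed to finite combinations, using that the set of finite combinations is dense and the measure of $\{|h| \ge \beta'\}$ is lower semicontinuous in $h$ for $\beta' < \beta$.) By \Cref{p:criterion-abstr}, $E$ is an SPR subspace; and by property (ii), $(f_i)$ is isometrically equivalent to the unit vector basis of $\ell_r$ (respectively $\ell_2$), so $E$ is the desired SPR copy.

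The only genuinely delicate point is the passage from finite linear combinations to arbitrary norm-one $h \in E$ in the verification of the hypothesis of \Cref{p:criterion-abstr}: equidistribution with $f$ is clear for finite combinations with $\sum |a_i|^r = 1$, but one must argue that a norm-limit of such combinations still charges $\{|h| \ge \beta\}$ with mass $> 1/2$. This is handled by fixing $\beta' \in (1/2, \alpha)$-worth of room — i.e. choosing $\beta'$ slightly below $\beta$ so that $\mathbb{P}(|f| > \beta') $ still exceeds $1/2$ — and using that norm convergence in $L_{p,q}$ implies convergence in measure, so the mass of $\{|h_n| \ge \beta'\}$ for finite combinations $h_n \to h$ passes to the limit up to an arbitrarily small loss; alternatively, one observes directly that the subspace of finite combinations already suffices to apply \Cref{p:criterion-abstr} after noting that $E$ equals the closed span and invoking the remark (just after \Cref{d:SPR}) that SPR passes to closures. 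Everything else is bookkeeping with the facts already assembled in the text.
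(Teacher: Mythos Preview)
Your proposal is correct and follows essentially the same approach as the paper: the proposition is stated immediately after the phrase ``The above reasoning implies,'' and that reasoning is precisely the assembly of \Cref{p:criterion-abstr}, the verification that $L_{p,q}(0,1)$ satisfies its hypotheses as an r.i.\ space, and the check that Gaussian (respectively $r$-stable) variables satisfy conditions (i)--(iii). Your handling of the closure issue via the remark after \Cref{d:SPR} (SPR passes to closures, so it suffices to work with the linear span of the $f_i$) is the cleanest way to dispose of the one point the paper leaves implicit.
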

\subsection{Stability of SPR subspaces under ultraproducts and small perturbations}\label{Stable SPR}

We show that SPR subspaces are stable under ultraproducts, and under small perturbations (in the sense of Hausdorff distance). These results hold for both real and complex spaces.

\begin{proposition}\label{stable under UF}
Suppose $\uu$ is an ultrafilter on a set $I$, and, for each $i \in I$, $E_i$ is a $C$-SPR subspace of a Banach lattice $X_i$. Then $\prod_\uu E_i$ is a $C$-SPR subspace of $\prod_\uu X_i$.
\end{proposition}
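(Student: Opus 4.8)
The plan is to verify the SPR inequality directly in the ultraproduct, using the equivalent characterization from \Cref{Thm1} that $C$-SPR is the same as containing no $\frac1C$-almost disjoint pairs, together with the fact that moduli and lattice operations in $\prod_\uu X_i$ are computed coordinatewise and then passed through the ultralimit. Concretely, recall that the ultraproduct $\prod_\uu X_i$ is a Banach lattice with $|(x_i)_\uu| = (|x_i|)_\uu$, $(x_i)_\uu \wedge (y_i)_\uu = (x_i \wedge y_i)_\uu$, and $\|(x_i)_\uu\| = \lim_\uu \|x_i\|$; moreover $\prod_\uu E_i$ embeds as a (closed) sublattice-free subspace of $\prod_\uu X_i$ in the obvious way. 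These facts are standard (see the ultraproduct construction for Banach lattices in, e.g., the references already cited for AL-representations and complexifications), and I would simply invoke them.

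\textbf{Main argument.} Suppose $x = (x_i)_\uu$ and $y = (y_i)_\uu$ lie in $\prod_\uu E_i$, represented by bounded families $(x_i), (y_i)$ with $x_i, y_i \in E_i$. Since each $E_i$ does $C$-SPR, for each $i$ there is a unimodular scalar $\lambda_i$ with $\|x_i - \lambda_i y_i\| \le C\,\| |x_i| - |y_i| \|$. The scalars $\lambda_i$ live in the (compact) unit circle or $\{\pm 1\}$, so $\lambda := \lim_\uu \lambda_i$ exists and is again unimodular, and $(\lambda_i)_\uu$ acts on $\prod_\uu X_i$ as multiplication by $\lambda$. Taking the ultralimit of the pointwise inequality and using that $\|\cdot\|$, subtraction, scalar multiplication, and the modulus all commute with $\lim_\uu$ in the ultraproduct, we get
\begin{equation}
\inf_{|\mu|=1}\| x - \mu y\| \le \| x - \lambda y\| = \lim_\uu \|x_i - \lambda_i y_i\| \le C \lim_\uu \big\| |x_i| - |y_i| \big\| = C\, \big\| |x| - |y| \big\| .
\end{equation}
This is exactly the $C$-SPR inequality for $x, y \in \prod_\uu E_i$ inside $\prod_\uu X_i$, so the proof is complete.

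\textbf{Where the work sits.} There is essentially no hard step here; the content is bookkeeping about the ultraproduct construction. The one point deserving a sentence of care is the choice of $\lambda$: in the real case $\lambda_i \in \{-1,1\}$ and $\lim_\uu \lambda_i \in \{-1,1\}$ trivially, while in the complex case one uses compactness of the unit circle to guarantee the ultralimit exists and is unimodular, and one checks that multiplication by $(\lambda_i)_\uu$ coincides with multiplication by the scalar $\lambda$ in the ultraproduct (immediate, since $\|(\lambda_i - \lambda) z_i\| \to_\uu 0$ for any bounded family). Alternatively, and perhaps more cleanly, one can bypass the scalar entirely by invoking \Cref{Thm1}: it suffices to show $\prod_\uu E_i$ has no $\frac1C$-almost disjoint pairs, and if $\| |x| \wedge |y| \| < \frac1C$ with $\|x\| = \|y\| = 1$ then $\lim_\uu \| |x_i| \wedge |y_i| \| < \frac1C$, so $\| |x_i| \wedge |y_i| \| < \frac1C$ for $\uu$-many $i$ while $\|x_i\|, \|y_i\| \to_\uu 1$; a small renormalization then contradicts that each $E_i$ does $C$-SPR. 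I would present the direct computation above as the main line and mention the \Cref{Thm1} route as an alternative, since the former also handles the complex case uniformly.
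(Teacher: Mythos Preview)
Your main argument is correct and essentially identical to the paper's own proof: pick $\lambda_i$ coordinatewise, pass to $\lambda = \lim_\uu \lambda_i$ by compactness of the unit torus, and use that moduli and norms commute with the ultralimit. One small caveat on your alternative route via \Cref{Thm1}: that theorem only gives that ``no $\tfrac1C$-almost disjoint pairs'' implies $2C$-SPR (and is stated for real lattices), so it would not recover the sharp constant $C$; your direct computation is the right main line.
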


We refer the reader to \cite{Hei} or \cite[Chapter 8]{DJT} for information on ultraproducts of Banach spaces and Banach lattices.

\begin{proof}
We have to show that, for any $x, y \in \prod_\uu E_i$, there exists a modulus one $\lambda$ so that $\|x - \lambda y\| \leq C \| |x| - |y| \|$. To this end, find families $(x_i)$ and $(y_i)$, representing $x$ and $y$ respectively. Then for each $i$ there exists $\lambda_i$ so that $|\lambda_i| = 1$ and $\|x_i - \lambda_i y_i\| \leq C \| |x_i| - |y_i| \|$. As ultraproducts preserve lattice operations, $|x|$ and $|y|$ are represented by $(|x_i|)$ and $(|y_i|)$, respectively, hence $\| |x| - |y| \| = \lim_\uu \| |x_i| - |y_i| \|$. By the compactness of the unit torus, there exists $\lambda = \lim_\uu \lambda_i$, with $|\lambda| = 1$. Then $\|x - \lambda y\| = \lim_\uu \|x_i - \lambda_i y_i\|$, which leads to the desired inequality.
\end{proof}

\begin{remark}
\Cref{stable under UF} can be used to give an alternative proof of \Cref{NCOR}.
First find a family of  finite dimensional subspaces $F_k \subseteq L_q(0,1)$, ordered by inclusion, so that $\cup_k F_k$ is dense in $L_q(0,1)$, and each $F_k$ is isometric to $\ell_q^{n_k}$ for some $n_k$ (one can, for instance, take subspaces spanned by certain step functions). A reasoning similar to that of  \cite[Theorem 8.8]{DJT} permits us to find a free ultrafilter ${\mathfrak{U}}$ so that $\prod_{\mathfrak{U}} F_k$ contains an isometric copy of $L_q(0,1)$. {\it A fortiori}, $\prod_{\mathfrak{U}} \ell_q$ contains an isometric copy of $L_q(0,1)$ (call it $E$). 
\\

\Cref{p:SPR in Lorentz} proves that $L_p(0,1)$ contains a subspace, isometric to $\ell_q$ (spanned by Gaussian random variables for $q=2$, $q$-stable random variables for $q<2$) which does SPR. By \Cref{stable under UF}, $\prod_{\mathfrak{U}} \ell_q$ embeds isometrically into $\prod_{\mathfrak{U}} L_p(0,1)$, in an SPR fashion. By \cite{Hei}, $\prod_{\mathfrak{U}} L_p(0,1)$ can be identified (as a Banach lattice) with $L_p(\Omega,\mu)$, for some measure space $(\Omega,\mu)$. 
\\

Let $X$ be the (separable) sublattice of $L_p(\Omega,\mu)$ generated by $E$. By \cite[Corollary 1.b.4]{MR540367}, $X$ is an $L_p$ space. \cite[Corollary, p.~128]{Lacey74} gives a complete list of all separable $L_p$ spaces; all of them lattice embed into $L_p(0,1)$. Thus, we have established the existence of an SPR embedding of $E = L_q(0,1)$ into $L_p(0,1)$.
\end{remark}

To examine stability of SPR under small perturbations, we introduce the notion of \emph{one-sided Hausdorff distance} between subspaces of a given Banach space. If $E, F$ are subspaces of $X$, define $d_{1H}(E,F)$ as the infimum of all $\delta > 0$ so that, for every $x \in F$ with $\|x\| \leq 1$ there exists $x' \in E$ with $\|x-x'\| < \delta$ (this ``distance'' is not reflexive, hence ``one-sided''). Note also that, for $x$ as above, there exists $x'' \in E$ with $\|x''\| = \|x\|$ and $\|x - x''\| < 2 \delta$; indeed, one can take $x'' = \frac{\|x\|}{\|x'\|} x'$.
\\

By ``symmetrizing'' $d_{1H}$, we obtain the classical \emph{Hausdorff distance}: if $E$ and $F$ are subspaces of $X$, let $d_H(E,F) = \max\{ d_{1H}(E,F), d_{1H}(F,E)\}$. For interesting properties of $d_H$, see \cite{Braga}, and references therein.

\begin{proposition}\label{stability under 1H}
Suppose $E$ is an SPR subspace of a Banach lattice $X$. Then there exists $\delta > 0$ so that any subspace $F$ with $d_{1H}(E,F) < \delta$ is again SPR.
\end{proposition}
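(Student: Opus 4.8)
The goal is to show that SPR is an open property with respect to the one-sided Hausdorff distance. The natural approach is to quantify SPR via \Cref{Thm1}: since $E$ is SPR, there exists $\varepsilon>0$ such that $E$ contains no $\varepsilon$-almost disjoint pairs, i.e. $\||x|\wedge|y|\|\geq\varepsilon$ for all $x,y\in S_E$. I would then fix $\delta>0$ small (to be chosen at the end, depending only on $\varepsilon$), take a subspace $F$ with $d_{1H}(E,F)<\delta$, and show $F$ contains no $\varepsilon'$-almost disjoint pairs for a suitable $\varepsilon'>0$; by \Cref{Thm1}(ii) this gives that $F$ does $\frac{2}{\varepsilon'}$-SPR.

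\textbf{Key steps.} First, given norm-one $u,v\in F$, use the definition of $d_{1H}(E,F)<\delta$ (and the norm-adjustment remark preceding the proposition) to find $u',v'\in E$ with $\|u'\|=\|v'\|=1$, $\|u-u'\|<2\delta$ and $\|v-v'\|<2\delta$. Second, apply the SPR bound in $E$ to get $\||u'|\wedge|v'|\|\geq\varepsilon$. Third, transfer this lower bound back to $F$ using the fact that $|\cdot|$ and $\wedge$ are $1$-Lipschitz (more precisely, $\big|\,|u|\wedge|v|-|u'|\wedge|v'|\,\big|\leq\big||u|-|u'|\big|+\big||v|-|v'|\big|\leq|u-u'|+|v-v'|$ pointwise/in the lattice sense, hence in norm), which yields
\begin{equation*}
\big\||u|\wedge|v|\big\|\geq\big\||u'|\wedge|v'|\big\|-\|u-u'\|-\|v-v'\|>\varepsilon-4\delta.
\end{equation*}
Finally, choose $\delta=\varepsilon/8$, so that $F$ contains no $(\varepsilon/2)$-almost disjoint pairs, and conclude via \Cref{Thm1}(ii) that $F$ does $\frac{4}{\varepsilon}$-SPR.

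\textbf{Main obstacle.} There is no deep difficulty here; the only point requiring a little care is the lattice inequality used in the transfer step — that $\||x|\wedge|y|\| $ is $1$-Lipschitz jointly in $x,y$ — which follows from the standard estimates $\big||x|-|y|\big|\leq|x-y|$ and $|a\wedge c-b\wedge c|\leq|a-b|$ valid in any (real) vector lattice (see \cite[Theorem 1.7 and Lemma 1.4]{AB}), together with the monotonicity of the lattice norm. For complex scalars the same works after representing $X$ as a complexification and using that the modulus is still $1$-Lipschitz. One should also note the harmless subtlety that $d_{1H}(E,F)<\delta$ only lets us approximate elements of $F$ by elements of $E$ (not vice versa), but that is exactly the direction needed to push the lower bound from $E$ to $F$, so ``one-sided'' suffices.
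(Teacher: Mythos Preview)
Your argument is correct in the real case, and in fact the paper contains (in draft material) precisely this proof via \Cref{Thm1}: approximate norm-one $u,v\in F$ by norm-one $u',v'\in E$, use $\||u'|\wedge|v'|\|\geq 1/C$, and the $1$-Lipschitz property of $(x,y)\mapsto|x|\wedge|y|$ to push the bound back to $F$.

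However, the paper's \emph{official} proof goes a different route, via \Cref{thm hspr}. Assuming $F$ fails $C'$-SPR for some large $C'$, one first invokes \Cref{thm hspr} to replace the witnessing pair $(f,g)$ by a ``well-separated'' pair $(f',g')\in F$ with $\min_{|\lambda|=1}\|f'-\lambda g'\|\geq 1/\sqrt2$, $\||f'|-|g'|\|$ small, and $\|f'\|+\|g'\|\leq 2$. One then approximates $f',g'$ by $f'',g''\in E$ and uses the triangle inequality directly on the SPR inequality (rather than on the almost-disjoint-pair reformulation) to contradict $C$-SPR for $E$.

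The reason the paper chooses this approach is exactly the point where your proposal breaks: your last sentence, ``For complex scalars the same works\ldots'', is not correct. The characterization in \Cref{Thm1} fails over $\C$ (see the example immediately following \Cref{Thm1} in the paper), so even after you show $F$ has no $\varepsilon'$-almost disjoint pairs, you cannot conclude SPR in the complex case. The paper explicitly states that the results of this subsection ``hold for both real and complex spaces'', and the \Cref{thm hspr} argument is what delivers the complex case. So: your proof is fine for the real statement as literally written, but does not cover the complex generality the paper intends, and your claimed complex extension is a genuine gap.
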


From this we immediately obtain:

\begin{corollary}
For any Banach lattice $X$, the set of its SPR subspaces is open in the topology determined by the Hausdorff distance.
\end{corollary}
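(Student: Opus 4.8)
The plan is to establish \Cref{stability under 1H}: that SPR is stable under small one-sided Hausdorff perturbations. By \Cref{Thm1}, it suffices to show that if $E$ contains no $\varepsilon$-almost disjoint pairs for some $\varepsilon > 0$, then there is $\delta > 0$ so that any $F$ with $d_{1H}(E,F) < \delta$ contains no $\varepsilon'$-almost disjoint pairs for some $\varepsilon' > 0$ (depending only on $\varepsilon$). First I would fix $\varepsilon > 0$ with the property that $\| |f| \wedge |g| \| \geq \varepsilon$ for all $f, g \in S_E$, which exists since $E$ does SPR. I will then take $\delta$ small (to be chosen, say $\delta = \varepsilon/10$) and let $F$ be any subspace with $d_{1H}(E,F) < \delta$.

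Next I would take arbitrary $f, g \in S_F$ and approximate them: using the remark following the definition of $d_{1H}$, pick $f', g' \in E$ with $\|f'\| = \|g'\| = 1$ and $\|f - f'\|, \|g - g'\| < 2\delta$. The key estimate is that the lattice operation $|\cdot| \wedge |\cdot|$ is $1$-Lipschitz in each variable with respect to the norm: from the standard inequality $\big| |a| \wedge |b| - |a'| \wedge |b'| \big| \leq |a - a'| + |b - b'|$ in a vector lattice (which follows from $\big| |a|\wedge|b| - |a'|\wedge|b| \big| \leq |a-a'|$ and the triangle inequality for $\wedge$, as used already in the proof of \Cref{Thm1}), taking norms gives $\big| \, \| |f| \wedge |g| \| - \| |f'| \wedge |g'| \| \, \big| \leq \|f - f'\| + \|g - g'\| < 4\delta$. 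Hence $\| |f| \wedge |g| \| \geq \| |f'| \wedge |g'| \| - 4\delta \geq \varepsilon - 4\delta$. Choosing $\delta < \varepsilon/8$, this is bounded below by $\varepsilon/2 > 0$, so $F$ contains no $(\varepsilon/2)$-almost disjoint pairs, and by \Cref{Thm1}(ii), $F$ does $\frac{4}{\varepsilon}$-stable phase retrieval.

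The corollary is then immediate: by \Cref{stability under 1H}, every SPR subspace $E$ has a $d_{1H}$-neighborhood consisting of SPR subspaces, and since $d_H(E,F) < \delta$ implies $d_{1H}(E,F) < \delta$, the same $\delta$-ball in the $d_H$-metric around $E$ consists of SPR subspaces; thus the set of SPR subspaces is $d_H$-open.

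I do not expect a serious obstacle here, as the argument is a routine perturbation estimate once one invokes the almost-disjoint-pair characterization from \Cref{Thm1}. The only point requiring a little care is the $1$-Lipschitz property of $(a,b) \mapsto \| |a| \wedge |b| \|$; if one prefers, one may avoid even this by working directly from the definition, approximating $f, g \in S_F$ by near-unit vectors in $E$ and estimating $\| |f| \wedge |g| \|$ from below via $\| |f'| \wedge |g'| \| - \| \, |f|\wedge|g| - |f'|\wedge|g'| \, \|$ and bounding the latter term by the Birkhoff-type inequalities already cited in the paper. One subtlety worth noting: the constant of SPR degrades from $C$ to roughly $2C$ under the perturbation, but this is harmless for the openness statement.
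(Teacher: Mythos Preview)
Your argument is correct in the real case and closely mirrors an alternative proof the authors had in mind; however, it takes a genuinely different route from the paper's actual proof, and that difference matters for the scope of the result.

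The paper proves \Cref{stability under 1H} not via the almost-disjoint-pair characterization (\Cref{Thm1}) but via \Cref{thm hspr}: assuming $F$ fails $C'$-SPR, one finds witnesses $f,g\in F$ which can be replaced (using \Cref{thm hspr}) by well-separated $f',g'\in F$ with controlled norms; these are then approximated by elements of $E$, and the triangle inequality applied to $\min_{|\lambda|=1}\|f''-\lambda g''\|$ and $\||f''|-|g''|\|$ yields a contradiction with the $C$-SPR of $E$. The point of going through \Cref{thm hspr} is that it is valid for both real and complex Banach lattices, and the section explicitly announces that these stability results are intended to hold in the complex case as well.

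Your approach, by contrast, rests entirely on \Cref{Thm1}, which the paper shows \emph{fails for complex spaces} (see the example following it). So while your perturbation estimate $\||f|\wedge|g|\|\ge\||f'|\wedge|g'|\|-4\delta$ is perfectly valid (and indeed holds pointwise in any complex Banach lattice too, via the Birkhoff inequality and $\big||a|-|a'|\big|\le|a-a'|$), the conclusion ``no almost disjoint pairs $\Rightarrow$ SPR'' is unavailable over $\C$. Thus your proof establishes the corollary only for real Banach lattices. If the real case is all that is asked, your argument is shorter and more transparent than the paper's; to get the complex case you would need to argue directly with the SPR inequality as the paper does.
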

\begin{remark}
See \cite[Proposition 3.10]{GMM} for a similar stability result for dispersed subspaces of a Banach lattice.
\end{remark}

\begin{proof}[Proof of \Cref{stability under 1H}]
Suppose $E$ does $C$-SPR. We shall show that, if $d_{1H}(E,F) < 1/(2\sqrt2(C+1))$, then $F$ does $C'$-SPR, with
$$
\frac1{C'} = \frac1C \Big( \frac1{\sqrt2} - 2 d_{1H}(E,F) \Big) - 2 d_{1H}(E,F) .
$$

Suppose, for the sake of contradiction, that $F$ fails to do $C'$-SPR. Find $f,g \in F$ so that $\min_{|\lambda|=1} \|f-\lambda g\| = 1$ and $\| |f| - |g| \| = c < 1/C'$. By \Cref{thm hspr}, we can find $f',g' \in F$ so that 
$$ \min_{|\lambda|=1}\|f'- \lambda g'\| \geq \frac1{\sqrt2}, \| |f'| - |g'| \| \leq c, {\textrm{  and   }} \|f'\| + \|g'\| \leq 2.$$

For any $\delta > d_{1H}(E,F)$, there exist $f'', g'' \in E$ so that $\|f'' - f'\| < \delta \|f'\|$ and $\|g'' - g'\| < \delta \|g'\|$. The triangle inequality implies:
\begin{align*}
    & \| |f''| - |g''| \| \leq \| |f'| - |g'| \| + \delta ( \|f'\| + \|g'\| ) \leq c + 2\delta ; \\
    & \min_{|\lambda|=1} \|f'' - \lambda g''\| \geq \min_{|\lambda|=1} \| f' - \lambda g' \| - \delta ( \|f'\| + \|g'\| ) \geq \frac1{\sqrt2} - 2 \delta .
\end{align*}
As $E$ does $C$-SPR, we conclude that 
$$
\frac1{\sqrt2} - 2 \delta \leq C (c + 2\delta) ,
$$
and consequently,
$$
\frac1{\sqrt2} - 2 d_{1H}(E,F) \leq C ( c + 2d_{1H}(E,F) ) < C \Big( \frac1{C'} + 2d_{1H}(E,F) \Big) ,
$$
which contradicts our choice of $C'$.
\end{proof}

R.~Balan proved that frames which do stable phase retrieval for finite dimensional Hilbert spaces are stable under small perturbations \cite{balan2013}. The following extends this to infinite dimensional subspaces of Banach lattices.

\begin{corollary}\label{c:perturb basis}
Suppose $(e_i)$ is a semi-normalized basic sequence in a Banach lattice $X$, so that $\overline{\spn}[e_i: i \in \N]$ does SPR in $X$. Then there exists $\vp > 0$ so that if $(f_i)\subseteq X$ and $\sum_i \|e_i - f_i\| < \vp$ then $\overline{\spn}[f_i : i\in \N]$ does SPR in $X$.
\end{corollary}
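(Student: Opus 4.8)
The plan is to deduce \Cref{c:perturb basis} from \Cref{stability under 1H} by showing that a sufficiently small $\ell_1$-perturbation of a semi-normalized basic sequence produces a subspace close to the original one in one-sided Hausdorff distance. First I would invoke the standard small-perturbation principle for basic sequences (the same one used via \cite[Theorem 1.3.9]{AK} in the proof of \Cref{PR not SPR}): let $(e_i)$ be semi-normalized with basis constant $K_0$, say $0 < a \leq \|e_i\| \leq b$, and choose $\vp > 0$ small enough (depending only on $a$, $b$, $K_0$, and the SPR-perturbation threshold $\delta$ coming from \Cref{stability under 1H}) so that if $\sum_i \|e_i - f_i\| < \vp$ then the map $T: \overline{\spn}[e_i] \to \overline{\spn}[f_i]$ determined by $T e_i = f_i$ is a well-defined isomorphism onto $F := \overline{\spn}[f_i : i \in \N]$ with $\|T - \Id\|$ as small as we like, in particular $\|T x - x\| \leq \eta \|x\|$ for all $x \in E := \overline{\spn}[e_i : i \in \N]$, where $\eta$ can be made smaller than any prescribed positive number by shrinking $\vp$. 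This is a routine consequence of the fact that $x \mapsto \sum_i e_i^*(x)(f_i - e_i)$ has small norm once $\sum_i \|f_i - e_i\|$ is small, using that the coordinate functionals $e_i^*$ are uniformly bounded on $E$ by $2K_0/a$.

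Next I would translate this operator bound into a bound on $d_{1H}(E,F)$. Given $x \in F$ with $\|x\| \leq 1$, write $x = T y$ for a unique $y \in E$; since $\|T^{-1}\| \leq (1-\eta)^{-1}$ we get $\|y\| \leq (1-\eta)^{-1}$, and then $x' := y \in E$ satisfies $\|x - x'\| = \|Ty - y\| \leq \eta \|y\| \leq \eta/(1-\eta)$. Hence $d_{1H}(E,F) \leq \eta/(1-\eta)$, which tends to $0$ as $\eta \to 0$, i.e. as $\vp \to 0$. Choosing $\vp$ so that $\eta/(1-\eta) < \delta$, where $\delta$ is the threshold furnished by \Cref{stability under 1H} for the $C$-SPR subspace $E$, we conclude that $F$ does SPR. (Strictly speaking \Cref{stability under 1H} gives an explicit $C'$, so one even gets a quantitative stability constant.)

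I expect the only real obstacle to be bookkeeping: making sure the perturbation principle for basic sequences is applied correctly to get a genuine bound of the form $\|Tx - x\| \le \eta \|x\|$ with $\eta \to 0$, rather than merely ``$T$ is an isomorphism.'' The key point is that $\|T - \Id\|$ is controlled by $\sum_i \|e_i^*\|_{E^*} \|f_i - e_i\|$, and $\sup_i \|e_i^*\|_{E^*} < \infty$ because $(e_i)$ is a semi-normalized basic sequence; everything else is a direct substitution into \Cref{stability under 1H}. No lattice structure beyond what is already packaged in \Cref{stability under 1H} is needed, so the argument works verbatim for real and complex scalars.
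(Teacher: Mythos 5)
Your proposal is correct and is exactly the route the paper intends: the corollary is stated as a consequence of \Cref{stability under 1H}, and your argument (small $\ell_1$-perturbation of a semi-normalized basic sequence gives an isomorphism $T=\Id+S$ with $\|S\|$ controlled by $\sup_i\|e_i^*\|\sum_i\|e_i-f_i\|$, hence small one-sided Hausdorff distance $d_{1H}(E,F)\le \eta/(1-\eta)$, then apply the proposition) supplies precisely the routine details the paper leaves implicit. No gaps.
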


\begin{remark}\label{r:approximation}
In real $L_2$, \Cref{c:perturb basis} can be strengthened. Suppose $(e_i)$ is a sequence of normalized independent mean-zero random variables, spanning an SPR-subspace of $L_2$. Then there exists an $\varepsilon > 0$ with the following property: if $(f_i)$ is a collection of  normalized independent mean-zero random variables so that $(e_i, f_j)$ are independent whenever $i \neq j$, and $\sup_i \|e_i - f_i\| \leq \varepsilon$, then $\spn[f_i : i \in \N] \subseteq L_2$ does SPR as well.
For the proof, recall that there exists $\gamma > 0$ so that the inequality $\| |u| \wedge |v| \| \geq \gamma$ holds for any norm one $u, v \in \spn[e_i : i \in \N]$. Let $\varepsilon = \gamma/4$.
 We will  show that, for any norm one $x, y \in F = \spn[f_i : i \in \N]$, we have $\| |x| \wedge |y| \| \geq \gamma/2$.
\\

 Write $x = \sum_i \alpha_i f_i$ and $y = \sum_i \beta_i f_i$, and define $x' = \sum_i \alpha_i e_i$, $y' = \sum_i \beta_i e_i$. Then
 $$
 \|x-x'\|^2 = \big\| \sum_i \alpha_i (f_i - e_i) \big\|^2 = \sum_i |\alpha_i|^2 \|f_i - e_i\|^2 \leq \varepsilon^2 \sum_i \alpha_i^2 = \varepsilon^2 .
 $$
 Similarly, $\|y - y'\| \leq \varepsilon$. Therefore, $\| |x| - |x'| \|, \| |y| - |y'| \| \leq \varepsilon$, hence $\| |x| \wedge |y| \| \geq \| |x'| \wedge |y'| \| - 2 \varepsilon$.
 But $\| |x'| \wedge |y'| \| \geq \gamma$, hence $\| |x| \wedge |y| \| \geq \gamma/2$.
\end{remark}

\section{SPR in $L_p$-spaces}\label{SPR in LP}
In this section, we investigate the relations between dispersed and SPR subspaces of $L_p$, as well as the relation between doing SPR in $L_p$ versus doing SPR in $L_q$.
\begin{theorem}\label{Further subspace doing SPR}
Every  infinite dimensional dispersed subspace of an order continuous Banach lattice $X$  contains a further closed infinite dimensional subspace that does SPR.
\end{theorem}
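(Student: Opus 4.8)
The plan is to first reduce to the case $X = L_1(\mu)$ for a finite measure $\mu$, and then to construct inside a dispersed subspace of $L_1(\mu)$ a further subspace on which the $L_1$ and $L_2$ norms are equivalent. By \Cref{summary}, a subspace $E \subseteq L_1(\mu)$ is dispersed if and only if $\|\cdot\|_{L_1} \sim \|\cdot\|_{L_q}$ on $E$ for all (equivalently, some) $q < 1$; but what we actually want is a subspace on which we have control from \emph{above} by an $L_2$-type norm, which is a stronger phenomenon. The idea is to run a Kadec--Pelczynski-style selection procedure: given the infinite dimensional dispersed subspace $E$, one builds a normalized basic sequence $(x_n) \subseteq E$ together with an increasing sequence of sets $A_n$ so that $x_n$ is ``mostly supported'' on $A_n$ in an $L_2$-sense — i.e., $\|x_n \chi_{A_n^c}\|_{L_1}$ (or the corresponding $L_2$ tail) is summable and tiny. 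Dispersedness is exactly what prevents the $x_n$ from becoming disjointly supported, so one instead obtains good $L_2$ integrability on the bulk.

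The reduction to $L_1(\mu)$ goes via the AL-representation machinery recalled in \Cref{prelim}: an order continuous Banach lattice with a weak unit sits as a dense ideal in some $L_1(\mu)$, and any infinite dimensional subspace lives in a separable sublattice, which has a weak unit, so we may assume this setup. The Kadec--Pelczynski dichotomy (\Cref{KPD}) then tells us that a dispersed $E$ embeds isomorphically into $L_1(\mu)$; one must check that the almost-disjointness obstruction transfers correctly between the original lattice norm and the $L_1$ norm, but this is precisely the kind of bookkeeping the AL-representation is designed for, and the key point — that dispersedness passes down — is already packaged in the classical theory. So the heart of the matter is genuinely the $L_1(\mu)$ case.

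Once in $L_1(\mu)$, here is the construction I would carry out. Using that $E$ is dispersed, hence strongly embedded, convergence in measure on (the closure of) $E$ coincides with norm convergence; in particular the unit ball of any finite dimensional subspace is uniformly integrable, and one can find $\delta > 0$ and $\eta > 0$ so that every norm-one $f \in E$ satisfies $\mu(\{|f| > \eta\}) > \delta$ — this is the qualitative input that feeds \Cref{p:criterion-abstr}. Now inductively pick normalized $x_n \in E$ in a subspace of growing finite codimension inside a fixed large finite dimensional piece, and measurable sets $A_n$ with $\mu(A_n) \to 0$ sufficiently slowly, arranging (by uniform integrability) that $\int_{A_n} |x_k| \, d\mu$ is controlled for $k \le n$ and that the contribution of $x_n$ outside $\bigcup_{k \le n} A_k$ is negligible. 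The upshot is a block-type sequence whose span $E'$ satisfies: for every norm-one $f \in E'$, a definite proportion of its mass sits at height $\ge \beta$ for a uniform $\beta > 0$ — i.e., the hypothesis of \Cref{p:criterion-abstr} with $\alpha > 1/2$. That proposition then immediately yields that $E'$ does SPR. Passing to a subsequence if necessary also keeps $(x_n)$ basic (e.g.\ via \cite[Theorem 1.3.9]{AK}), so $E'$ is infinite dimensional and closed.

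The main obstacle is the inductive selection: one needs to simultaneously guarantee (a) that the tail masses are small enough that the ``$\alpha > 1/2$'' threshold in \Cref{p:criterion-abstr} survives for \emph{arbitrary} unit vectors in the infinite dimensional span $E'$ — not just for the generators $x_n$ — which forces a careful choice of how fast $\mu(A_n) \downarrow 0$ relative to how much one shrinks the ambient finite dimensional subspace at each step; and (b) that dispersedness is used in the right quantitative form, since a priori ``no almost disjoint \emph{sequences}'' gives uniform integrability on finite dimensional pieces but says nothing directly about the infinite dimensional span. The resolution is that the construction produces $E'$ together with the data ($\beta$, the sets $A_n$) needed to verify the criterion on all of $E'$ by a convexity/averaging estimate, rather than trying to re-derive dispersedness of $E'$ abstractly. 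I would expect the clean way to organize this is to prove the statement first for $E$ spanned by finitely supported combinations where the support structure is explicit, and then take limits.
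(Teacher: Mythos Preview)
Your reduction from a general order continuous Banach lattice to $L_1(\mu)$ via the AL-representation and Kadec--Pelczynski is exactly what the paper does, and that part is fine.

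The $L_1$ argument, however, has a genuine gap. Your plan is to build $E' \subseteq E$ satisfying the hypothesis of \Cref{p:criterion-abstr}, namely $\mu(\{|f| \geq \beta\}) \geq \alpha > 1/2$ for every norm-one $f \in E'$. But by \Cref{p is not always 2}(iii) with $p=1$, that condition is \emph{equivalent} to $E'$ doing SPR in $L_1$. So you are effectively promising to construct an SPR subspace directly, and the whole content of the theorem lies in the ``inductive selection'' you leave unspecified. The sketch you give (``$x_n$ mostly supported on $A_n$,'' ``contribution of $x_n$ outside $\bigcup_{k \leq n} A_k$ negligible'') actually pushes toward the $x_n$ being almost disjointly supported, which contradicts dispersedness; and the uniform-integrability input you cite only gives $\mu(\{|f| > \eta\}) > \delta$ for some $\delta > 0$, with no mechanism to lift $\delta$ above $1/2$ on a subspace. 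You identify this threshold as the main obstacle, but nothing in the outline overcomes it.

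The paper avoids this difficulty entirely. Rather than aiming for the measure-theoretic criterion, it uses a deep external input: by the Krivine--Maurey stability theorem (applied via \cite{MR636897}), every closed infinite dimensional subspace of $L_1(\mu)$ contains $\ell_r$ almost isometrically for some $r \in [1,2]$. Since $E$ is dispersed it cannot contain $\ell_1$, so $r > 1$, and then the Clarkson-type argument of \Cref{rem-clark} shows that any $(1+\varepsilon)$-copy of $\ell_r$ in $L_1$ with $\varepsilon$ small enough cannot contain $\varepsilon'$-almost-disjoint pairs, hence does SPR. The key idea you are missing is this use of Krivine--Maurey to produce a subspace whose Banach-space geometry (uniform non-squareness of $\ell_r$, $r>1$) forbids almost disjoint pairs.
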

\begin{proof}
We first prove the claim for $L_1(\Omega,\mu)$, with $\mu$ a finite measure. Let $E$ be a closed infinite dimensional subspace of $L_1(\Omega,\mu)$ containing no  normalized almost  disjoint sequence. By \Cref{summary}, $E$ also does not contain $\ell_1$. By \cite{MR636897}, every closed infinite dimensional subspace of $L_1(\Omega,\mu)$  almost isometrically contains $\ell_r$ for some $1\leq r\leq 2$. Since $E$ does not contain $\ell_1$, it follows that there exists $r>1$ such that for all $\vp>0$, $\ell_r$ is $(1+\vp)$-isomorphic to a subspace of $E$.  Let $\alpha>0$ be such that $\ell_1^2$ is not $(1+\alpha)$-isomorphic to a subspace of $\ell_r$. Such an $\alpha$ exists by the Clarkson argument in \Cref{rem-clark}.
We now claim that for $0<\varepsilon<\alpha$, every subspace of $L_1$ that is $(1+\varepsilon)$-isomorphic to $\ell_r$ must do stable phase retrieval. Indeed, if $E$ failed SPR, it would contain for all $\gamma>0$ a $(1+\gamma)$-copy of $\ell_1^2.$  Thus, for all $\gamma>0$, we have that $\ell_1^2$ is $(1+\gamma)(1+\varepsilon)$-isomorphic to a subspace of $\ell_r$.
However, this gives a contradiction if $\gamma>0$ is small enough such that $(1+\gamma)(1+\varepsilon)<1+\alpha$. 
\\


Now let $E$ be a closed infinite dimensional dispersed subspace of an order continuous Banach lattice $X$. Replacing $E$ be a separable subspace of $E$, we may assume that $E$ is separable. Using that every closed sublattice of an order continuous Banach lattice is order continuous, replacing $X$ by the closed sublattice generated by $E$ in $X$, we may assume that $X$ is separable. It follows in particular that $X$ has a weak unit. By the AL-representation theory, there exists a finite measure space $(\Omega,\mu)$ such that $X$ can be represented as an ideal of $L_1(\Omega,\mu)$ satisfying
\begin{enumerate}
    \item $X$ is dense in $L_1(\Omega,\mu)$ and $L_\infty(\Omega,\mu)$ is dense in $X$;
    \item $\|f\|_1\leq \|f\|_X$ and $\|f\|_X\leq 2\|f\|_\infty$ for all $f\in X$.
\end{enumerate}
 Since $E$ contains no almost disjoint normalized sequence, the Kadec-Pelczynski dichotomy \cite[Proposition 1.c.8]{MR540367} guarantees that $\|\cdot\|_X\sim \|\cdot\|_{L_1}$ on $E$. In particular, we may view $E$ as a closed infinite dimensional subspace of $L_1(\mu)$. We claim that $E$ contains no almost disjoint sequence when viewed as a subspace of $L_1$.  Indeed, suppose there exists a sequence $(x_n)$ in $E$ with $\|x_n\|_{L_1}=1$ for all $n$, and a disjoint sequence $(d_n)$ in $L_1$ with $\|x_n-d_n\|_{L_1}\to 0$. Then in particular, $x_n$ converges to $0$ in measure. By \cite[Theorem 4.6]{MR3688941}, $x_n\xrightarrow{un}0$ in $X$.  That is, for all $u\in X$, we have that $\||x_n|\wedge |u|\|_X\to 0$. Thus, by \cite[Theorem 3.2]{MR3688941} there exists a subsequence $(x_{n_k})$ and a disjoint sequence $(d_k)$ in $X$ such that $\|x_{n_k}-d_k\|_X\to 0$. Since $\|x_n\|_{L_1}=1$ and $\|\cdot\|_X\sim \|\cdot\|_{L_1}$ on $E$, this contradicts that $E$ contains no normalized almost disjoint sequence.
 \\
 
 By the beginning part of the proof, we may select an infinite dimensional closed subspace $E'$ of $E$ that does SPR in $L_1$. In other words, there exists $\varepsilon>0$ such that  for all $f,g\in E'$ with $\|f\|_{L_1}=\|g\|_{L_1}=1$ we have 
 $$\||f|\wedge |g|\|_{L_1}\geq \varepsilon.$$
 Since $\|\cdot\|_X\sim \|\cdot\|_{L_1}$ on $E$, the same is true on $E'$, so we may view $E'$ as a closed infinite dimensional subspace of $X$. We claim that it contains no normalized almost disjoint pairs. Indeed, if $f,g\in E'$ with $\|f\|_X=\|g\|_X=1$, then $\|f\|_{L_1}\sim \|g\|_{L_1}\sim 1$. Now, using that $E'$ does SPR in $L_1$ and property (ii) of the embedding, we have 
 $$\||f|\wedge |g|\|_{X}\geq \||f|\wedge |g|\|_{L_1}\gtrsim \varepsilon.$$
 Thus, $E'$ contains no normalized almost disjoint pairs when viewed as a subspace of $X$. It follows that $E'$ does SPR in $X$.
\end{proof}
\begin{question}
With \Cref{cor clark} and \Cref{Further subspace doing SPR} in mind, we ask the following: If a Banach lattice $X$ contains an infinite dimensional dispersed subspace $E$, does it contains an infinite dimensional SPR subspace? If so, can we construct an infinite dimensional SPR subspace $E'$ with $E'\subseteq E\subseteq X$?
\end{question}

Our next results are motivated by the equivalence between statements (a)-(d) in \Cref{summary} and the discussion in \Cref{Lambda p recap}. Note that it follows from \Cref{summary} (a)-(d) that if $E$ is dispersed in $L_p(\mu)$ and $1\leq q<p$, then $E$ may be viewed as a closed subspace of $L_q(\mu)$, and it is dispersed in $L_q(\mu)$. It is then natural to ask the following question: Let $\mu$ be a finite measure and $1\leq q<p$. Let $E$ be a  subspace of $L_p(\mu)\subseteq L_q(\mu)$. What is the relation between $E$  doing SPR in $L_p(\mu)$ versus $E$ doing SPR in $L_q(\mu)$? It is easy to see that if $E$ does SPR in $L_q(\mu)$, then $E$ does SPR in $L_p(\mu)$ if and only if $\|\cdot\|_{L_p}\sim \|\cdot\|_{L_q}$ on $E$.  We will now show that $E$ doing SPR in $L_p(\mu)$ does not imply $E$ does SPR in $L_q(\mu)$, even though the property of being dispersed passes from $L_p(\mu)$ to $L_q(\mu)$.

\begin{theorem}\label{p>2 not down}
For all $2\leq p<\infty$ there exists a closed subspace $E\subseteq L_p[0,1]$ such that $E$ does stable phase retrieval in $L_p[0,1]$ but  $E$ fails to do stable phase retrieval in $L_q[0,1]$ for all $1\leq q<p$.
\end{theorem}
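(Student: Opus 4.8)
The plan is to reformulate everything through \Cref{Thm1}: we look for a closed $E\subseteq L_p[0,1]$ with $\varepsilon:=\inf\{\||f|\wedge|g|\|_{L_p}:f,g\in E,\ \|f\|_{L_p}=\|g\|_{L_p}=1\}>0$ — so that $E$ does $\tfrac{2}{\varepsilon}$-SPR in $L_p$ — which nonetheless contains, for each $q<p$, a sequence of pairs $f_k,g_k\in E$ with $\|f_k\|_{L_q}=\|g_k\|_{L_q}=1$ and $\||f_k|\wedge|g_k|\|_{L_q}\to 0$. The latter already kills $L_q$-SPR: applying the substitution $(h_1,h_2)=(f_k+g_k,\,f_k-g_k)$ from the proof of \Cref{Thm1}, one gets $\||h_1|-|h_2|\|_{L_q}=2\||f_k|\wedge|g_k|\|_{L_q}\to 0$ while $\min_{|\lambda|=1}\|h_1-\lambda h_2\|_{L_q}=\min(2\|f_k\|_{L_q},2\|g_k\|_{L_q})=2$, so no finite constant can work in $L_q$. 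Observe that once $E$ does SPR in $L_p$ it is dispersed, so for $p\ge 2$ \Cref{summary} forces $E\cong\ell_2$ and $\|\cdot\|_{L_p}\sim\|\cdot\|_{L_q}$ on $E$ for all $q<p$; hence the contrast between $L_p$ and $L_q$ cannot come from the norm and must be created in the \emph{lattice} structure, exploiting the elementary fact that a nonnegative $h$ can have $\|h\|_{L_p}\ge\varepsilon$ yet $\|h\|_{L_q}$ arbitrarily small — namely a tall thin spike $h=M\chi_A$ with $M^p|A|=\varepsilon^p$ has $\|h\|_{L_q}=\varepsilon M^{1-p/q}\to 0$. Thus the designated pairs $(f_k,g_k)$ must meet only on an ever taller, ever thinner spike.

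Concretely I would build $E$ as the closed span of an entangled family consisting of (a) a fixed Hilbertian SPR ``skeleton'' — e.g.\ the span of independent Gaussians, which by \Cref{p:criterion-abstr} does SPR in every $L_r$ — and (b) vectors $h_k^{\pm}$ each carrying a shared tall spike $M_k\chi_{A_k}$ together with small ``distinguishing'' tails that separate $h_k^+$ from $h_k^-$, the designated pairs being $(h_k^+,h_k^-)$. One fixes $M_k\uparrow\infty$ superexponentially (say $M_k=2^{2^k}$) and $|A_k|=M_k^{-p}$, so that $\sum_k|A_k|<\infty$ and each spike has $L_p$-norm $1$. The arithmetic of the required degeneracy works out: the part of $|h_k^+|\wedge|h_k^-|$ beyond the shared spike can be kept uniformly small in $L_p$, hence in $L_q$ (since $\|\cdot\|_{L_q}\le\|\cdot\|_{L_p}$ for the probability measure), while $M_k^q|A_k|=M_k^{-(p-q)}$ decays doubly-exponentially in $k$, so a tail whose $L_q$-mass decays only exponentially in $k$ dominates it simultaneously for every $q<p$; this gives $\||h_k^+|\wedge|h_k^-|\|_{L_q}/\|h_k^{\pm}\|_{L_q}\to 0$. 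On the $L_p$ side the plan is a case analysis for a normalized pair $f,g\in E$: if the skeleton components of both are bounded below in norm, SPR of the skeleton gives $\||f|\wedge|g|\|_{L_p}\gtrsim 1$; otherwise one argues directly that the spike/tail parts force a bounded-below overlap. Combining this with the routine checks that $E\cong\ell_2$ via the skeleton coordinates and $\|\cdot\|_{L_p}\sim\|\cdot\|_{L_q}$ on $E$ completes the proof.

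The step I expect to be the main obstacle is making (b) compatible with ``no almost disjoint pair in $L_p$'' — the \emph{gluing}. Every naive realization fails: if the tails of distinct pairs live on disjoint sets, their differences span a copy of $\ell_p$ and $E$ is not even dispersed; if the tails are themselves Gaussian, then within a pair the two tails overlap with $L_q$-norm bounded below, so that pair never degenerates in $L_q$; and if the spike supports $A_k$ are pairwise disjoint (or nested), then $h_k^+$ and $h_\ell^-$ become disjoint (or almost disjoint, with $L_p$-overlap $\asymp M_k/M_\ell\to 0$), again destroying $L_p$-SPR. Resolving this requires a genuinely entangled choice of supports — a Rudin-type combinatorial arrangement of \emph{overlapping} spike supports, or tails split across two regions that are cross-intersecting between distinct pairs but disjoint within each pair — engineered so that any two normalized elements of $E$ are forced to share mass at the spike scale, which is robust in $L_p$ but negligible in $L_q$. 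Pinning down that arrangement, and re-running the $L_p$ case analysis in its presence, is the technical heart of \Cref{p>2 not down}.
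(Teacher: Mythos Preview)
Your strategic framing is sound --- the almost-disjoint-pair criterion of \Cref{Thm1}, the fact that $E$ must be Hilbertian for $p\ge 2$, and the tall-thin-spike heuristic are all correct --- but the proposal has a genuine gap: you have not found the construction, and the ``entangled spike supports / Rudin-type combinatorics'' direction you suggest is a wrong turn.

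The idea you are missing is to take the \emph{Rademacher} sequence $(r_j)$ as the skeleton, not Gaussians. The decisive property is $|r_j|\equiv 1$ for all $j$: every Rademacher has the \emph{same} modulus. With $g_j=r_j+2^{j/p}\one_{I_j}$ where $I_j\subset[1,2]$ are pairwise disjoint with $|I_j|=2^{-j}$, one gets $|g_i|-|g_j|=2^{i/p}\one_{I_i}-2^{j/p}\one_{I_j}$ on the nose, so $\||g_i|-|g_j|\|_{L_q}\to 0$ for every $q<p$ while $\|g_i\pm g_j\|_{L_q}\ge\|r_i\pm r_j\|_{L_q}\gtrsim 1$. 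No entanglement of spike supports is needed: the overlap is carried entirely by the Rademacher part on $[0,1]$, and the disjoint spikes on $[1,2]$ merely record the coefficient moduli $|a_j|$. Your objection that disjoint spike supports would make distinct basis vectors almost disjoint in $L_p$ evaporates once the skeleton itself overlaps perfectly.

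The $L_p$-SPR verification in the paper is also rather different from your proposed case split. One uses \Cref{thm hspr} (and \Cref{Rem on SPR Hilbert}) to reduce to pairs $x,y\in E$ whose Rademacher parts $f=\sum a_jr_j$, $g=\sum b_jr_j$ are \emph{orthogonal} in $L_2$, with $\|f\|_{L_2}=1$, $\|g\|_{L_2}\le 1$. If the spike coordinates already differ, i.e.\ $\big(\sum_j||a_j|-|b_j||^p\big)^{1/p}\ge\varepsilon$, one is done. Otherwise $||a_j|-|b_j||$ is uniformly small, and one expands $f^2-g^2$ in the orthonormal Walsh system $\{\one\}\cup\{r_ir_j\}_{i<j}$: the orthogonality $\sum a_jb_j=0$ kills the cross terms and yields $\||f|^2-|g|^2\|_{L_2}^2\gtrsim 1$, which combined with Khintchine gives $\||f|-|g|\|_{L_p}\gtrsim 1$. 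This algebraic use of the Walsh structure is the technical heart; a bare ``skeleton has SPR, so if both skeleton components are large we win'' argument would not suffice, since the Rademacher span itself \emph{fails} phase retrieval.
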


\begin{proof}
Let $2\leq p<\infty$. It will be convenient to build the subspace $E\subseteq L_p[0,2]$ instead of $L_p[0,1]$.  Let $(r_j)_{j=1}^\infty$ be the Rademacher sequence of independent, mean-zero, $\pm 1$ random variables on $[0,1]$.  For all $j\in\N$, we let $g_j=r_j+2^{j/p}\one_{[1+2^{-j},1+2^{-j+1})}$.
Let $E=\overline{\spn}_{j\in\N} g_j$. 
\\

We first prove for all $1\leq q<p$ that $E$ fails to do stable phase retrieval in $L_q[0,2]$.  We have for all $j\neq i$ that 
$\|g_j-g_i\|_{L_q}^q=\|g_j+g_i\|_{L_q}^q\geq 2^{q-1}$.  On the other hand, $|r_j|=|r_{j+1}|$ and $\lim\|2^{j/p}\one_{[1+2^{-j},1+2^{-j+1})}\|^q_{L_q}=0$.  Thus, $\lim \||g_j|-|g_{j+1}|\|_{L_q}^q=0$.  This shows that $E$ fails to do stable phase retrieval in $L_q[0,2]$.
\\

We now prove that $E$ does stable phase retrieval in $L_p[0,2]$. Note that by Khintchine's Inequality there exists $B\geq 0$ so that $(\sum |a_j|^2)^{1/2}\leq \|\sum a_j r_j\|_{L_p}\leq B(\sum |a_j|^2)^{1/2}$ for all scalars $(a_j)\in\ell_2$.  Thus, we have for all $f=\sum a_j r_j$ and
$x=f+\sum a_j2^{j/p}\one_{[1+2^{-j},1+2^{-j+1})}\in E$ that
\begin{align*}
\|f \|^p_{L_2([0,1])}&\leq\|x\|^p_{L_p([0,2])}=\|\sum a_j r_j\|_{L_p([0,1])}^p+\sum |a_j|^p\\
&\leq B^p (\sum |a_j|^2)^{p/2}+(\sum |a_j|^2)^{p/2}\\
&=(B^p+1)\|f\|^p_{L_2([0,1])}.
\end{align*}

This computation shows that the map $g_j\mapsto r_j$ extends linearly to a map $E\subseteq L_p[0,2]\to L_2[0,1]$, $x\mapsto f$,  establishing an isomorphism between $E$ and a Hilbert space. By Theorem \ref{thm hspr} and \Cref{Rem on SPR Hilbert} it suffices to prove that there exists a constant $\delta>0$ so that if $x,y\in E$ and $f,g\in L_2[0,1]$ with $f=\one_{[0,1]}x$ and $g=\one_{[0,1]}y$ such that $\|f\|_{L_2}=1$, $\|g\|_{L_2}\leq 1$, and $\langle f,g\rangle=0$ then $\||x|-|y|\|_{L_p}\geq \delta$. \\ 

We now claim that it suffices to prove that there exists $\vp>0$ such that 
\begin{equation}
    {\textrm{if   }} \||x\one_{(1,2)}|-|y\one_{(1,2)}|\|_{L_p}<\vp {\textrm{   then   }} \||f|^2-|g|^2\|_{L_2}^2\geq\delta .
    \label{eq:if-then}
\end{equation}
Indeed, as all the $L_q$ norms are equivalent on the span of the Rademacher sequence, there exists a uniform constant $K>0$ so that the following holds:
\begin{align*}
    \||f|^2-|g|^2\|_{L_2}^2&=\int (|f|^2-|g|^2)^2\\
    &=\int (|f|-|g|)(|f|+|g|)(|f|^2-|g|^2)\\
     &\leq \||f|-|g|\|_{L_2} \|(|f|+|g|)(|f|^2-|g|^2)\|_{L_2}\\
     &\leq K\||f|-|g|\|_{L_2} \leq K\||f|-|g|\|_{L_p}.
\end{align*}
Here, the constant $K$ comes from bounding 
\begin{equation}
    \label{eq:bound on product}
\|(|f|+|g|)(|f|^2-|g|^2)\|_{L_2}\leq K.
\end{equation}
To get this upper estimate, note that, by H\"older's Inequality,
\begin{align*}
    &
    \|(|f|+|g|)(|f|^2-|g|^2)\|_{L_2}=\|(|f|+|g|)(|f|+|g|)(|f|-|g|)\|_{L_2}
    \\ &
    \leq \||f|+|g|\|_{L_6}^2 \||f|-|g|\|_{L_6} ,
\end{align*}
hence, by Triangle Inequality,
\begin{equation}
    \label{eq:after triangle ineq}
\|(|f|+|g|)(|f|^2-|g|^2)\|_{L_2} \leq \big( \|f\|_{L_6} + \|g\|_{L_6} \big)^3 .
\end{equation}
 Further, both $f$ and $g$ belong to the span of independent Rademachers, on which all the $L_p$ norms are equivalent (for finite $p$). Since we know that $\|f\|_{L_2} = 1$ and $\|g\|_{L_2} \leq 1$, this gives a bound for the right-hand side of \eqref{eq:after triangle ineq}, which, in turn, implies \eqref{eq:bound on product}.
\\

To finish the proof of the claim, note that if $\||x\one_{(1,2)}|-|y\one_{(1,2)}|\|_{L_p}\geq\vp$ then $\||x|-|y|\|_{L_p}\geq \vp$ and if $\||x\one_{(1,2)}|-|y\one_{(1,2)}|\|_{L_p}< \vp$ then $\||x|-|y|\|_{L_p}\geq \delta K^{-1}$. 
\\

We now establish \eqref{eq:if-then} with  $\vp=1/8$ and $\delta=1$.  Let $x=\sum a_j(r_j+2^{j/p}\one_{[1+2^{-j},1+2^{-j+1})})$ and $y=\sum b_j(r_j+2^{j/p}\one_{[1+2^{-j},1+2^{-j+1})})$.  We let $f=\sum a_j r_j$ and $g=\sum b_j r_j$ and assume that $\|f\|_{L_2}^2=\sum |a_j|^2=1$, $\|g\|_{L_2}^2=\sum |b_j|^2\leq 1$ and $\langle f,g\rangle=\sum a_j b_j=0$.  We may assume that 
\begin{equation}\label{E:pbound}
(\sum ||a_j|-|b_j||^p)^{1/p}=\||x\one_{(1,2)}|-|y\one_{(1,2)}|\|_{L_p}< \vp=1/8.
\end{equation}
All that remains is to prove that $\||f|^2-|g|^2\|^2_{L_2}\geq \delta$.  
We have from \eqref{E:pbound} that $||a_j|-|b_j||\leq 1/8$ for all $j\in\N$.  Hence, $||a_j|^2-|b_j|^2|\leq 1/4$ for all $j\in\N$ as $|a_j|+|b_j|\leq 2$.  As $r_j^2=\one_{[0,1]}$ for all $j\in\N$,  we have that 

\begin{equation}\label{E:walsh}
    f^2-g^2=(f-g)(f+g)=2\sum_{j>i}(a_ja_i-b_jb_i)r_jr_i+\sum (a_j^2-b_j^2)\one .
\end{equation}
Note that \eqref{E:walsh} gives an expansion for $f^2-g^2$ in terms of the ortho-normal collection of vectors $\{\one_{[0,1]}\}\cup \{r_jr_i\}_{j>i}$.  Thus we have that
\begin{align*}
   2^{-1} &\||f|^2-|g|^2\|_{L_2}^2\geq 2\sum_{j>i}|a_ja_i-b_jb_i|^2\\
    &=\sum_{j\in\N}\sum_{i\in\N}|a_ja_i-b_jb_i|^2-\sum_{j\in\N}|a_j^2-b_j^2|^2\\
    &=\sum_{j\in\N}\Big((\sum_{i\in\N}|a_ja_i|^2+|b_jb_i|^2)-(2a_jb_j\sum_{i\in\N}a_ib_i)\Big)-\sum_{j\in\N}|a_j^2-b_j^2|^2\\
     &=\sum_{j\in\N}(\sum_{i\in\N}|a_ja_i|^2+|b_jb_j|^2)-\sum_{j\in\N}|a_j^2-b_j^2|^2\hspace{1cm}\textrm{ as }\sum a_ib_i=0\\
     &=
     \left(\|f\|_{L_2}^4+\|g\|_{L_2}^4\right)-\sum_{j\in\N}|a_j^2-b_j^2|^2\\
     &\geq 
     \left(\|f\|_{L_2}^4+\|g\|_{L_2}^4\right)-\frac{1}{4}\sum_{j\in\N}|a_j^2-b_j^2|\hspace{1cm}\textrm{ as } |a_j^2-b_j^2|\leq 1/4\\
      &\geq 
     \left(\|f\|_{L_2}^4+\|g\|_{L_2}^4\right)-\frac{1}{4}(\|f\|_{L_2}^2+\|g\|_{L_2}^2)\\
      &= \frac{3}{4}+\|g\|_{L_2}^2 \left(\|g\|_{L_2}^2-\frac{1}{4}\right) \hspace{1cm}\textrm{ as } \|f\|_{L_2}=1\\
      &\geq \frac{3}{4}-\frac{1}{8} \hspace{4.2cm}\textrm{ as } \|g\|_{L_2}\leq1.
\end{align*}
Hence, $\||f|^2-|g|^2\|_{L_2}^2\geq 3/2-\frac{1}{4}> 1=\delta.$
%
\end{proof}

\begin{example}\label{Example not down}
In the special case $p=2$, \Cref{p>2 not down} could have been proven using a result in \cite{calderbank2022stable}. Indeed, as above, let $(r_j)$ denote the Rademacher sequence, realized on the interval $[0,1]$. Define $g_j=r_j+2^{\frac{j}{2}}\one_{[1+2^{-j},1+2^{-j+1})}$. We can think of the sequence $(g_j)$ as being defined on a finite measure space. Note that $\|2^{\frac{j}{2}}\one_{[1+2^{-j},1+2^{-j+1})}\|_{L_2}=1$. Hence, for the same reason as in \cite{calderbank2022stable}, $\overline{\text{span}}\{g_j\}$ does SPR in $L_2$. However, recall that the Rademacher sequence does not  do phase retrieval; we've also scaled the additional indicator functions  to be perturbative in $L_1$. Hence, for $i\neq j$ we have
$\||g_i|-|g_j|\|_{L_1}=\frac{1}{2^i}+\frac{1}{2^j}$, whereas the other side of the SPR inequality is of order $1.$ This provides an example of a subspace $E\subseteq L_2(\mu)\subseteq L_1(\mu)$ that does SPR in $L_2(\mu)$ but not in $L_1(\mu)$. 
\end{example}
As a special case of the next result, we show that for $1\leq q<p<\infty$, if $E$ does SPR in $L_p$ and $L_q$, then we can both interpolate and extrapolate to deduce that $E$ does SPR in $L_r$ for $1\leq r\leq p.$
\begin{theorem}\label{Extrap/interp}
Suppose $\mu$ is a probability measure and $1\leq q<p<\infty$. Let $E$ be a closed subspace of $L_p$ (real or complex). Assume that $\|\cdot\|_{L_p}\sim \|\cdot\|_{L_q}$ on $E$, and $E$ does stable phase retrieval in $L_q$. Then for all $1\leq r\leq p$, $\|\cdot\|_{L_r}\sim \|\cdot \|_{L_p}$ on $E$, and $E$ does stable phase retrieval in $L_r$.
\end{theorem}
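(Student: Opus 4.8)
The plan is to deduce both conclusions from the standing $L_p$--$L_q$ norm equivalence on $E$, the hypothesis that $E$ does SPR in $L_q$, and the H\"older-to-Lipschitz upgrade of \Cref{HSPR==SPR}. For the norm equivalences: since $\|\cdot\|_{L_p}\sim\|\cdot\|_{L_q}$ on $E$ with $q<p$, the equivalence of conditions (b) and (c) in \Cref{summary} immediately gives $\|\cdot\|_{L_s}\sim\|\cdot\|_{L_p}$ on $E$ for every $0<s<p$, hence for every $1\le r\le p$ (the case $r=p$ being vacuous); in the complex case one invokes the complex analogue of this norm-comparison statement, which does not see the scalar field. Fix a constant $C_0\ge 1$, allowed to depend on $r$, with $C_0^{-1}\|h\|_{L_p}\le\|h\|_{L_q}\le\|h\|_{L_p}$ and $C_0^{-1}\|h\|_{L_p}\le\|h\|_{L_r}\le\|h\|_{L_p}$ for all $h\in E$ --- the upper bounds are H\"older's inequality on the probability space, the lower bounds are what we just established.

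Now fix $f,g\in E$ and let $C_q$ be the SPR constant of $E$ in $L_q$. Since every $f-\lambda g$ again lies in $E$, the norm equivalence gives
\[
\inf_{|\lambda|=1}\|f-\lambda g\|_{L_r}\le C_0\inf_{|\lambda|=1}\|f-\lambda g\|_{L_q}\le C_0C_q\,\||f|-|g|\|_{L_q}.
\]
If $q\le r\le p$, then $\||f|-|g|\|_{L_q}\le\||f|-|g|\|_{L_r}$ by H\"older, and $E$ does $C_0C_q$-SPR in $L_r$ (this recovers, for $r$ in this range, the elementary fact noted before \Cref{p>2 not down}). For $1\le r<q$ the same argument stalls, because $|f|-|g|$ need not lie in $E$ and passing to a smaller exponent shrinks norms; the remedy is to interpolate $\||f|-|g|\|_{L_q}$ between $L_r$ and the \emph{upper} endpoint $L_p$. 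Taking $\theta=(1/q-1/p)/(1/r-1/p)\in(0,1)$, so that $1/q=\theta/r+(1-\theta)/p$, Lyapunov's inequality together with $\||f|-|g|\|_{L_p}\le\|f\|_{L_p}+\|g\|_{L_p}\le C_0(\|f\|_{L_r}+\|g\|_{L_r})$ --- legitimate because these last quantities live on $E$ --- yields, after chaining with the display above,
\[
\inf_{|\lambda|=1}\|f-\lambda g\|_{L_r}\le C_0^{2-\theta}C_q\,\||f|-|g|\|_{L_r}^{\theta}\bigl(\|f\|_{L_r}+\|g\|_{L_r}\bigr)^{1-\theta}.
\]
This is exactly $\theta$-H\"older SPR in $L_r$, and \Cref{HSPR==SPR} upgrades it to SPR in $L_r$ with constant $\sqrt2\bigl(\sqrt8\,C_0^{2-\theta}C_q\bigr)^{1/\theta}$.

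The only real obstacle is the asymmetry exploited above: the left-hand side of the SPR inequality is built from vectors of $E$, so all of its $L_s$-norms are comparable, while the right-hand side involves $|f|-|g|\notin E$, and one cannot bound a small $L_r$-norm by a larger $L_q$-norm. Downward extrapolation survives only because the two hypotheses conspire --- SPR at the \emph{lower} exponent $q$ and norm equivalence up to the \emph{higher} exponent $p$ --- so that $\||f|-|g|\|_{L_q}$ can be squeezed by interpolation between a harmless $L_p$-bound and the very $L_r$-quantity we want. This should be contrasted with \Cref{p>2 not down}, where neither ingredient is available below $q$ and SPR genuinely fails to propagate downward; the present theorem is not in conflict with it, since there $E$ does \emph{not} do SPR in any $L_q$ with $q<p$.
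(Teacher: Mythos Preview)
Your proof is correct and follows essentially the same route as the paper: split into the ranges $q\le r\le p$ (direct from monotonicity of $L_s$-norms and the norm equivalence on $E$) and $1\le r<q$ (Lyapunov interpolation of $\||f|-|g|\|_{L_q}$ between $L_r$ and $L_p$, yielding $\theta$-H\"older SPR, then upgrade via \Cref{HSPR==SPR}). One small difference: you obtain the norm equivalence $\|\cdot\|_{L_r}\sim\|\cdot\|_{L_p}$ directly from (b)$\Leftrightarrow$(c) of \Cref{summary}, whereas the paper first establishes SPR in $L_p$ and then argues through dispersedness; your route is marginally cleaner and sidesteps the real-versus-complex issue in \Cref{Thm1}, since the norm-comparison statement is insensitive to the scalar field.
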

\begin{proof}
Assume first that $q<r\leq p$.  Let $C>0$ so that the $L_q$ and $L_p$ norms are $C$-equivalent on $E$, and let $K>0$ so that $E$ does $K$-stable phase retrieval in $L_q$.  As $q<r\leq p$ we have for all $f,g\in E$ that
\begin{equation}
   \inf_{|\lambda|=1} \|f-\lambda g\|_{L_r}\leq  C\inf_{|\lambda|=1} \|f-\lambda g\|_{L_q}\leq CK\||f|-|g|\|_{L_q}\leq CK\||f|-|g|\|_{L_r}.
\end{equation}
Thus, $E$ does stable phase retrieval in $L_r$.
\\

We now turn to the case $1\leq r<q$.  By the previous argument, $E$ does stable phase retrieval in $L_p$.  Hence, the $L_p$ norm is equivalent to the $L_1$ norm on $E$, and hence the $L_p$ norm is equivalent to the $L_r$ norm on $E$. Let $C>0$ so that the $L_p$ and $L_r$ norms are $C$-equivalent on $E$, and let $K>0$ so that $E$ does $K$-stable phase retrieval in $L_p$.
Let $\theta$ be the value so that $q^{-1}=\theta r^{-1}+(1-\theta) p^{-1}$.
By H\"older's inequality, for any $f,g\in E,$
\begin{equation}
    \||f|-|g|\|_{L_q}\leq \||f|-|g|\|_{L_r}^\theta\left(\|f\|_{L_p}+\|g\|_{L_p}\right)^{1-\theta}\leq C\||f|-|g|\|_{L_r}^\theta\left(\|f\|_{L_r}+\|g\|_{L_r}\right)^{1-\theta}.
\end{equation}
Therefore, for any $f,g\in E$, we have 
$$\inf_{|\lambda|=1}\|f-\lambda g\|_{L_r}\leq \inf_{|\lambda|=1}\|f-\lambda g\|_{L_q}\leq K\||f|-|g|\|_{L_q}\leq CK\||f|-|g|\|_{L_r}^\theta\left(\|f\|_{L_r}+\|g\|_{L_r}\right)^{1-\theta}.$$
Thus, $E$ does $\theta$-H\"older stable phase retrieval in $L_r$. By \Cref{HSPR==SPR}, it follows that $E$ does stable phase retrieval in $L_r$.
\end{proof}

In \Cref{p>2 not down}, we showed that when $2\leq p<\infty$ an SPR-subspace $E\subseteq L_p[0,1]$ need not do SPR in $L_q[0,1]$ for any $1\leq q<p$. Our next result shows that the case $1\leq p<2$ is completely different. 

\begin{theorem}\label{p is not always 2}
Let $(\Omega,\mu)$ be a probability space and let $E$ be a closed infinite dimensional subspace of  $L_p(\Omega,\mu)$. Consider the following statements:
\begin{enumerate}
    \item $E$ does stable phase retrieval in $L_p(\Omega,\mu)$.
    \item $E$ does stable phase retrieval in $L_1(\Omega,\mu)$ and $\|\cdot\|_{L_p}\sim \|\cdot\|_{L_1}$ on $E$.
    \item There exists $\alpha>0$ such that for all $x,y\in E$, 
    \begin{equation}
        \mu(\{t\in \Omega: |x(t)|\geq \alpha\|x\|_{L_p} \ \text{and} \ |y(t)|\geq \alpha\|y\|_{L_p}\})>\alpha.
    \end{equation}
\end{enumerate}
Then for all $1\leq p<\infty$, (iii)$\Leftrightarrow$(ii)$\Rightarrow$(i). Moreover, if $1\leq p<2$, all three statements are equivalent.
\end{theorem}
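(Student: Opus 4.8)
For every $1\le p<\infty$ one has to prove (iii)$\Leftrightarrow$(ii) and (ii)$\Rightarrow$(i), and for $1\le p<2$ in addition (i)$\Rightarrow$(iii); nothing further is required in the range $p\ge2$, since \Cref{p>2 not down} already exhibits an $E$ satisfying (i) but failing (iii). Throughout I would use \Cref{Thm1} to trade each occurrence of ``$E$ does SPR in $L_s$'' for the quantitative statement ``there is $\varepsilon>0$ with $\||f|\wedge|g|\|_{L_s}\ge\varepsilon$ whenever $f,g\in E$, $\|f\|_{L_s}=\|g\|_{L_s}=1$''.

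\emph{The easy implications.} For (ii)$\Rightarrow$(i): as $\mu$ is a probability measure $\|h\|_{L_1}\le\|h\|_{L_p}$, so $\||f|\wedge|g|\|_{L_p}\ge\||f|\wedge|g|\|_{L_1}$; inserting the $L_1$-lower bound from (ii) together with the equivalence $\|\cdot\|_{L_1}\sim\|\cdot\|_{L_p}$ on $E$ (used only to convert between the two normalizations) yields the $L_p$-lower bound, hence (i). For (iii)$\Rightarrow$(ii): taking $y=x$ in (iii) and integrating gives $\|x\|_{L_1}\ge\alpha^2\|x\|_{L_p}$, so $\|\cdot\|_{L_1}\sim\|\cdot\|_{L_p}$ on $E$; moreover if $\|x\|_{L_1}=\|y\|_{L_1}=1$ then $\|x\|_{L_p},\|y\|_{L_p}\ge1$, so on the set furnished by (iii) one has $|x|\wedge|y|\ge\alpha$ on a set of measure $>\alpha$, whence $\||x|\wedge|y|\|_{L_1}>\alpha^2$, i.e. $L_1$-SPR.

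\emph{The implication (ii)$\Rightarrow$(iii).} Being SPR in $L_1$, $E$ is in particular dispersed in $L_1$ (an almost disjoint normalized sequence would yield almost disjoint pairs), so by \Cref{summary} it is strongly embedded in $L_1$; hence its $L_1$-unit ball is equi-integrable, and, using $\|\cdot\|_{L_1}\sim\|\cdot\|_{L_p}$ on $E$, so is $\{z\in E:\|z\|_{L_p}\le1\}$ (in $L_1$). If (iii) failed, then for every $\alpha>0$ we could choose $f,g\in E$ with $\|f\|_{L_p}=\|g\|_{L_p}=1$ and $\mu(G_\alpha)\le\alpha$, where $G_\alpha=\{\,|f|\ge\alpha,\ |g|\ge\alpha\,\}$; since $|f|\wedge|g|<\alpha$ off $G_\alpha$, we get $\||f|\wedge|g|\|_{L_1}\le\alpha+\int_{G_\alpha}|f|$, and the last integral tends to $0$ with $\alpha$ by equi-integrability. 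After rescaling $f,g$ to $L_1$-norm one (harmless, their $L_1$-norms being bounded below), this contradicts the $L_1$-SPR lower bound once $\alpha$ is small.

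\emph{The crux: (i)$\Rightarrow$(iii) when $1\le p<2$.} By \Cref{Thm1}, (i) gives $\varepsilon>0$ with $\||f|\wedge|g|\|_{L_p}\ge\varepsilon$ on the unit sphere of $E$, and $E$ is dispersed in $L_p$. The plan is to reduce everything to the lattice-theoretic fact
\[
(\star)\qquad p<2,\ E\subseteq L_p(\mu)\ \text{dispersed}\ \Longrightarrow\ \bigl\{\,|x|^p:\ x\in E,\ \|x\|_{L_p}\le1\,\bigr\}\ \text{is equi-integrable in}\ L_1 .
\]
Granting $(\star)$, (iii) follows in one line: if $\|f\|_{L_p}=\|g\|_{L_p}=1$ and $\mu(\{|f|\ge\alpha,\ |g|\ge\alpha\})\le\alpha$, then (as $|f|\wedge|g|<\alpha$ off that set) $\||f|\wedge|g|\|_{L_p}^p\le\int_{\{|f|\ge\alpha,\,|g|\ge\alpha\}}|f|^p+\alpha^p\le\omega(\alpha)+\alpha^p$, where $\omega(\alpha):=\sup\{\int_A|z|^p:\ \|z\|_{L_p}\le1,\ \mu(A)\le\alpha\}\to0$ by $(\star)$; for small $\alpha$ this contradicts $\||f|\wedge|g|\|_{L_p}\ge\varepsilon$, so (iii) holds with that $\alpha$. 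To prove $(\star)$ I would argue by contradiction: dispersedness forces $\|\cdot\|_{L_p}\sim\|\cdot\|_{L_1}$ on $E$ (\Cref{summary}), so $\{|x|^p\}$ is bounded in $L_1$, and if it is not equi-integrable then the Kadec--Pel\-czy\'nski subsequence-splitting lemma (see, e.g., \cite{AK}), together with the fact that the offending sets can be arranged to have vanishing measure, produces, after passing to a subsequence, $L_p$-normalized $x_n\in E$ and pairwise disjoint sets $C_n$ with $\mu(C_n)\to0$ and $\inf_n\int_{C_n}|x_n|^p>0$; one then wants to promote these disjoint ``spikes'' $x_n\chi_{C_n}$ to the conclusion that a subsequence of $(x_n)$ is equivalent to the unit vector basis of $\ell_p$, which is impossible since $E$ is dispersed (a copy of $\ell_p$ inside $L_p(\mu)$ contains a normalized almost disjoint sequence, cf.\ \Cref{summary}(i)). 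The delicate point — and precisely where $p<2$ enters, in contrast to the $p\ge2$ construction of \Cref{p>2 not down}, which yields spikes of proportionate $L_p$-mass exactly \emph{because} $p\ge2$ — is controlling the non-spike parts $x_n\chi_{C_n^c}$ during this extraction; I expect this to be the main obstacle, and it should be handled using the structure theory of subspaces of $L_p$, $p<2$, from \cite{MR312222,MR636897}.
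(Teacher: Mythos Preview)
Your outline is correct and, for the hard implication (i)$\Rightarrow$(iii) with $p<2$, lands on the same strategy as the paper. A few comments comparing the two.

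\emph{Easy implications.} Your arguments for (iii)$\Rightarrow$(ii) and (ii)$\Rightarrow$(i) are essentially identical to the paper's.

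\emph{(ii)$\Rightarrow$(iii).} Here you take a genuinely different route. The paper first completes the full equivalence for $p=1$ (in particular proving (i)$\Rightarrow$(iii) when $p=1$), and then bootstraps: SPR in $L_1$ gives (iii) with $L_1$-norms, and the hypothesis $\|\cdot\|_{L_p}\sim\|\cdot\|_{L_1}$ on $E$ converts this to (iii) with $L_p$-norms. Your direct argument via equi-integrability of $B_E$ in $L_1$ (dispersed in $L_1$ $\Rightarrow$ no copy of $\ell_1$ $\Rightarrow$ $B_E$ relatively weakly compact $\Rightarrow$ equi-integrable) is correct and more self-contained for this implication, since it does not require first proving the $p=1$ case of (i)$\Rightarrow$(iii). (One small remark: ``$\{|x|^p\}$ bounded in $L_1$'' is automatic from $\|x\|_{L_p}\le1$, no norm equivalence needed there.)

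\emph{(i)$\Rightarrow$(iii), $p<2$.} Your reformulation as $(\star)$ is a clean packaging of exactly what the paper proves, and you correctly flag the delicate point and the right toolkit. The paper's execution of the step you leave open goes as follows: from non-equi-integrability one extracts $L_p$-normalized $x_n\in E$ and disjoint $\Omega_n$ with $\|x_n\one_{\Omega_n}\|_{L_p}\ge\alpha$ and $\|x_j\one_{\Omega_n}\|_{L_p}<\varepsilon_n$ for $j\ne n$ (arranged by a diagonal/subsequence argument). Since $E$ is dispersed it contains no $\ell_p$, so by Rosenthal's theorem \cite[Theorem 8]{MR312222} the sequence $(x_n)$ is equivalent to a semi-normalized sequence in $L_{p'}(\nu)$ for some $p<p'\le2$. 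Passing to a weakly convergent subsequence there and forming the differences $x_{2n}-x_{2n-1}$ gives a semi-normalized weakly null sequence; since $L_{p'}$ has an unconditional basis, a further subsequence is unconditional, and type $p'$ then forces it to be dominated by the $\ell_{p'}$-basis. On the other hand, the spike structure yields, for every $N$, a block $(x_{2n}-x_{2n-1})_{n=k+1}^{k+N}$ that $K$-dominates the $\ell_p^N$-basis with $K$ independent of $N$. Since $p<p'$, this is the contradiction. This is precisely the ``control of the non-spike parts via Rosenthal'' you anticipated.
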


\begin{proof}
$(iii)\Rightarrow (ii)$: Note that condition (iii) implies that $E$ contains no normalized $\alpha^{1+\frac{1}{p}}$-disjoint pairs, when viewed in the $L_p$ norm. Hence, $E$ does SPR in $L_p$, which implies that $\|\cdot\|_{L_p}\sim \|\cdot\|_{L_1}$ on $E$. Using this in condition (iii), we conclude that $E$  contains no normalized almost disjoint pairs, when viewed in the $L_1$ norm, hence does SPR in $L_1$.
\\

$(ii)\Rightarrow (i)$:  Let $C>0$ so that $\|x\|_{L_p}\leq C\|x\|_{L_1}$ for all $x\in E$.  Let $K>0$ so that $E$ does $K$-stable phase retrieval in $L_1$.  Thus, for all $x,y\in E$ we have that
$$ \min_{|\lambda|=1}\|x-\lambda y\|_{L_p}\leq C \min_{|\lambda|=1}\|x-\lambda y\|_{L_1}\leq CK \||x|-|y|\|_{L_1}\leq CK\||x|-|y|\|_{L_p}.
$$
Thus, $E$ does $CK$-stable phase retrieval in $L_p(\Omega)$.
\\

$(i)\Rightarrow(iii)$: Let $1\leq p<2$ and assume that (i) is true but (iii) is false. We first note that condition (i) implies that $\|\cdot\|_{L_1}\sim \|\cdot\|_{L_p}$ on $E$. We may choose a sequence of pairs $(x_n,y_n)_{n=1}^\infty$ in $E$ and $\alpha>0$ such that $\|x_n\|_{L_p}= \|y_n\|_{L_p}=1$, with
\begin{equation}
    \mu(\{t\in\Omega:|x_n|\wedge |y_n|\geq n^{-1}\})\rightarrow 0, \ \text{but} \  \||x_n|\wedge |y_n|\|_{L_p}\geq 2\alpha.
\end{equation}
As $(|x_n|\wedge|y_n|)_{n=1}^\infty$ converges in measure to $0$ and is uniformly bounded below in $L_p$ norm, after passing to a subsequence we may  find a sequence of disjoint subsets $(\Omega_n)_{n=1}^\infty\subseteq\Omega$ such that  
\begin{equation}
    \|(|x_n|\wedge |y_n|) \one_{\Omega_n^c}\|_{L_p}=\||x_n|\wedge |y_n|-(|x_n|\wedge |y_n|) \one_{\Omega_n}\|_{L_p}\to 0.
\end{equation}

Let $\vp_n\searrow0$ with $\vp_1<\alpha/2$.  After passing to a subsequence, we may assume that $\|x_n \one_{\Omega_n}\|_{L_p}\geq \alpha$ for all $n\in\N$.  As $(\Omega_n)_{n=1}^\infty$ is a sequence of disjoint subsets of the probability space $(\Omega,\mu)$, we have that $\mu(\Omega_n)\rightarrow 0$.  Thus, after passing to a further subsequence we may assume that $\|x_j \one_{\Omega_n}\|_{L_p}<\vp_n$ for all $j<n$.  Again, after passing to a further subsequence we may assume that there exists values $(\beta_n)_{n=1}^\infty$ such that $\lim_{j\rightarrow\infty}\|x_j\one_{\Omega_n}\|_{L_p}=\beta_n$ for all $n\in\N$.  Furthermore, we may assume that $\|x_j\one_{\Omega_n}\|_{L_p}<\beta_n+\vp_n/2$ for all $j>n$. As $(\Omega_j)_{j=1}^\infty$ is a sequence of disjoint sets, we have for all $N\in \N$ that
$$\lim_{j\rightarrow\infty}\|x_j\|^p_{L_p}\geq \lim_{j\rightarrow\infty}\sum_{n=1}^N \|x_j\one_{\Omega_n}\|_{L_p}^p=\sum_{n=1}^N \beta_n^p.
$$
In particular, we have that $\beta_n\rightarrow 0$.  Hence, after passing to a further subsequence of $(x_n)_{n=1}^\infty$ we may assume that $\beta_n<\vp_n/2$ for all $n\in\N$.  Thus,
$\|x_j\one_{\Omega_n}\|_{L_p}<\vp_n$ for all $j>n$. In summary, we have that for all $n\in \mathbb{N}$, $\|x_n\one_{\Omega_n}\|_{L_p}\geq \alpha$ and for all $j\neq n$, we have $\|x_j\one_{\Omega_n}\|_{L_p}<\varepsilon_n.$
\\

As $\vp_1<\alpha/2$, we have in particular that $\|x_j-x_n\|_{L_p}\geq \alpha/2$ for all $j\neq n$.  
We have that $(x_n)_{n=1}^\infty$ is a semi-normalized sequence in a closed subspace of $L_p$ which does not contain $\ell_p$.  Thus, by \cite[Theorem 8]{MR312222}, $(x_n)_{n=1}^\infty$ is equivalent to a semi-normalized sequence in $L_{p'}(\nu)$ for some $p<p'\leq 2$ and probability measure $\nu$.
We may assume after passing to a subsequence that $(x_n)_{n=1}^\infty$ is weakly convergent in $L_{p'}(\nu)$.  Thus, the sequence $(x_{2n}-x_{2n-1})_{n=1}^\infty$ converges weakly to $0$ in $L_{p'}(\nu)$. As $L_{p'}(\nu)$ has an unconditional basis, after passing to a further subsequence, we may assume that $(x_{2n}-x_{2n-1})_{n=1}^\infty$ is  $C$-unconditional for some constant $C$.  
\\

As $L_{p'}(\nu)$ has type $p'$ and $(x_{2n}-x_{2n-1})_{n=1}^\infty$ is unconditional, we have that $(x_{2n}-x_{2n-1})_{n=1}^\infty$ is dominated by the unit vector basis of $\ell_{p'}$.  
We will prove that there exists a constant $K$ so that for all $N\in\N$ there exists $k\in\N$ such that the finite sequence $(x_{2n}-x_{2n-1})_{n=k+1}^{k+N}$ $K$-dominates the unit vector basis of $\ell_p^N$. As $p<p'$, this would contradict that $(x_{2n}-x_{2n-1})_{n=1}^\infty$ is dominated by the unit vector basis of $\ell_{p'}$. Alternatively, one could use that $L_{p}$ has type $p$, the  uniform containment of $\ell_p^N$, and  \cite[Theorem 13]{MR312222} to get that $E$ contains a subspace isomorphic to $\ell_p$, which, in view of \Cref{summary}, contradicts that $E$ does stable phase retrieval in $L_p$.\\


Let $N\in\N$ and $\vp>0$.  Let $k\in\N$ be large enough so that $ 2\varepsilon_k N<2^{-1}\alpha$. Let $(a_j)_{j=k+1}^{k+N}$ be a sequence of scalars.  We have that
\begin{align*}
    \|\sum_{j=k+1}^{k+N} &a_j(x_{2j}-x_{2j-1})\|_{L_p}^p 
     \geq\sum_{n=k+1}^{k+N}\|\sum_{j=k+1}^{k+N} a_j(x_{2j}-x_{2j-1})\|_{L_p(\Omega_{2n})}^p\\
     &\geq\sum_{n=k+1}^{k+N}\Big(2^{1-p}\|a_n(x_{2n}-x_{2n-1})\|_{L_p(\Omega_{2n})}^p-\|\sum_{j\neq n} a_j(x_{2j}-x_{2j-1})\|_{L_p(\Omega_{2n})}^p\Big)\\
     &\geq\sum_{n=k+1}^{k+N}\Big(2^{1-p}\alpha^p |a_n|^p-2^p\vp_k^p\left(\sum_{j\neq n}|a_j|\right)^p\Big)\\
     &\geq\sum_{n=k+1}^{k+N}\Big(2^{1-p}\alpha^p |a_n|^p-2^p\vp_k^p N^{p-1}\sum_{j\neq n}|a_j|^p\Big)\\
        &\geq\left( 2^{1-p}\alpha^p-2^p\varepsilon_k^p N^p\ \right) \sum_{n=k+1}^{k+N} |a_n|^p\\
        &\geq 2^{-p}\alpha^p \sum_{n=k+1}^{k+N} |a_n|^p.
\end{align*}

Now that we have established that all three statements in \Cref{p is not always 2} are equivalent for $1\leq p<2$, we can show the implication (ii)$\Rightarrow$(iii) for $1\leq p<\infty$. Indeed, we assume (ii) holds. 
Since $E$ does SPR in $L_1$, by (ii)$\Rightarrow$(iii) for $p=1$, we deduce that there exists $\alpha>0$ such that for all $x,y\in E$, 
    \begin{equation}\label{KP Type SPR}
        \mu(\{t\in \Omega: |x(t)|\geq \alpha\|x\|_{L_1} \ \text{and} \ |y(t)|\geq \alpha\|y\|_{L_1}\})>\alpha.
    \end{equation}
    Now we use the second assumption of (ii) to replace the $L_1$ norm with the $L_p$ norm in \eqref{KP Type SPR}.
\end{proof}

\section{$C(K)$-spaces with SPR subspaces}\label{s:SPR CK}

Throughout this section, subspaces are assumed to be closed and infinite dimensional, unless otherwise mentioned. Recall that a non-empty compact Hausdorff space is called \emph{perfect} if it has no isolated points, and \emph{scattered} (or \emph{dispersed}) if it contains no perfect subsets.
For a compact Hausdorff space $K$, we define its \emph{Cantor-Bendixson derivative} $K'$ to be the set of all non-isolated points of $K$. Clearly $K'$ is closed, and $K = K'$ iff $K$ is perfect; otherwise, $K'$ is a proper subset of $K$. Also, if $K$ contains a perfect set $S$, then $S$ lies inside of $K'$ as well.

\begin{theorem}\label{t:C(K) SPR}
Suppose $K$ is a  compact Hausdorff space. Then $C(K)$ has an SPR subspace if and only if $K'$ is infinite.
\end{theorem}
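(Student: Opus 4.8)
The plan is to prove both directions by connecting the problem to the Cantor--Bendixson structure of $K$ and to the results already established. For the easy direction, suppose $K'$ is finite. Then $K = K' \cup (K \setminus K')$ where $K \setminus K'$ is a (necessarily discrete, hence finite or countable) set of isolated points; more importantly, I claim $C(K)$ contains no infinite dimensional dispersed subspace, so by \Cref{Thm1} it has no infinite dimensional SPR subspace. Indeed, if $K'$ is finite, then every function in $C(K)$ is determined on a cofinite-in-some-sense set of isolated points, and one should be able to argue that $C(K)$ is isomorphic to a subspace of $c$ (or $c_0 \oplus \mathbb{R}^n$-type space) — the point being that near each of the finitely many points of $K'$, continuity forces the values on the isolated points clustering there to converge. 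Then \Cref{c0} applies: $c$, and order continuous atomic lattices, have no infinite dimensional dispersed subspaces, and being dispersed is inherited by subspaces. Hence no infinite dimensional SPR subspace exists. I would need to be slightly careful about whether $C(K)$ really embeds in $c$ when $K'$ is finite but $K$ itself is, say, a convergent sequence together with scattered extra structure; the cleanest route is probably to observe directly that a normalized almost disjoint sequence in $C(K)$ would have ``almost disjoint supports'' and use finiteness of $K'$ to extract genuinely disjoint bumps, contradicting that their span sits near $K'$.

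For the hard direction, suppose $K'$ is infinite. I want to produce an infinite dimensional SPR subspace of $C(K)$. Since $K'$ is an infinite compact Hausdorff space, it has an infinite dimensional $C(K')$, and in fact (using that $K'$ is infinite compact Hausdorff) one can find a countable discrete sequence $(x_n)$ in $K'$ together with a point or structure allowing a copy of $c$ or $C(\omega^\omega+1)$ — but the key structural input is that $K'$ infinite guarantees $C(K)$ contains a copy of $c_0$ that ``sees'' infinitely many points, via a sequence of functions with shrinking supports. The strategy: first reduce to constructing an SPR subspace in a concrete model space, then transfer. By \Cref{r:emb into C of ball} and the discussion around \Cref{cor clark}, every separable Banach space embeds into $C[0,1]$ (and $C(\Delta)$) in an SPR fashion; so if I can find inside $C(K)$ an isometric (or isomorphic) copy of $C[0,1]$, or even of $C(\Delta)$, I would be done by composing embeddings. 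When $K'$ is infinite and $K$ contains a perfect set, $C(K)$ contains an isometric copy of $C(\Delta)$ (via a continuous surjection $\Delta \to$ that perfect set, or rather a norm-one extension operator), and the SPR embedding $E \hookrightarrow C(\Delta)$ from \Cref{r:emb into C of ball} finishes it. When $K'$ is infinite but $K$ is scattered (no perfect set), $K$ is countable-ish and $C(K)$ contains $c_0$; here I would use \Cref{cor clark}'s claim that order continuous atomic lattices and $c$ have the property that dispersed subspaces can be realized as SPR, combined with the fact that $C(K)$ for scattered $K$ with $K'$ infinite contains a copy of $c$ or $c_0$ on which an explicit SPR subspace (e.g.\ an $\ell_\infty$- or $c_0$-analogue of the construction in \Cref{l infty to l infty}) can be built.

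The main obstacle I anticipate is the scattered case: one must show that $K'$ infinite, with $K$ scattered, still forces enough ``room'' in $C(K)$ to house an infinite dimensional SPR subspace, and it is not immediately obvious that $c_0 \subseteq C(K)$ suffices — indeed, \Cref{ss:SPR embeddings} shows no infinite dimensional subspace of $\ell_p$ ($p<\infty$) does SPR, so one cannot simply look for SPR subspaces of $c_0$ itself. The resolution should be that when $K'$ is infinite one actually gets a copy of $c$ (not just $c_0$) sitting nicely in $C(K)$ — the constant function together with the $c_0$ part — and $c$, being non-order-continuous, does admit room: more precisely, pass to $\ell_\infty$-like behavior. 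Actually the right statement is likely: $K'$ infinite $\iff$ $C(K)$ contains $\ell_\infty$ as a subspace (by a pull-back along the infinitely many limit points), and then \Cref{l infty to l infty} gives an SPR copy of $\ell_\infty$ inside $C(K)$. Verifying that $\ell_\infty \hookrightarrow C(K)$ exactly when $K'$ is infinite — using that infinitely many non-isolated points let one build a bounded sequence of peaked functions whose closed span is all of $\ell_\infty$ — is the crux, and I would spend most of the effort there, likely by choosing a countable discrete (in $K$) subset of $K'$, separating it by disjoint open sets, and using Urysohn's lemma to build the embedding.
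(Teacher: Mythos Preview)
Your treatment of the case $K'$ finite, and of the case where $K$ contains a perfect set, matches the paper in outline: in the first case one passes to the finite-codimensional ideal $C_0(K,K')\cong c_0(K\setminus K')$ and invokes a gliding hump; in the second, a continuous surjection $K\to[0,1]$ transports an SPR subspace from $C[0,1]$.

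The genuine gap is the scattered case with $K'$ infinite. Your proposed resolution --- that $K'$ infinite forces $\ell_\infty\hookrightarrow C(K)$, after which \Cref{l infty to l infty} finishes --- is false. When $K$ is scattered, $C(K)^*=\ell_1(K)$, so $C(K)$ is Asplund; equivalently (as the paper itself recalls after \Cref{p:must be SPR}) $C(K)$ is $c_0$-saturated. Hence $\ell_\infty$ never embeds into $C(K)$ for scattered $K$. Already for $K=[0,\omega^2]$, $K'$ is infinite yet $C(K)$ is separable. A sequence of disjointly supported Urysohn bumps spans $c_0$, not $\ell_\infty$.

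The paper handles this case by a construction you did not anticipate: it builds a sequence $(x^{(n)})\subseteq C(K)$ equivalent to the $c_0$-basis whose closed span nevertheless does SPR. This does not contradict the fact that $c_0$ itself has no SPR subspaces, because the lattice structure of $C(K)$, not of $c_0$, is what prevents almost disjointness. One selects infinitely many points $t_i\in K'\setminus K''$, separates them by disjoint open sets $U_i$, and for each $i$ chooses isolated points $s_{ji}$ accumulating at $t_i$. Each $x^{(n)}$ is built (via Urysohn) to equal $1/2$ on the closure of $\{s_{j,2n}:j\in\N\}$, to equal $1$ at a single designated isolated point $s_{1,2n-1}$, and to take the value $1/2$ at the points $s_{n,2i}$ for $i<n$. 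These deliberately shared values force $\||x|\wedge|y|\|\geq 1/3$ for all norm-one $x,y$ in the span, while the overall shape keeps $(x^{(n)})$ isometrically equivalent to the $c_0$-basis. This explicit overlap mechanism is the crux of the argument and has no analogue in your proposal.
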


The proof depends on an auxiliary result, strengthening \Cref{r:emb into C of ball}.

\begin{proposition}\label{p:separble into CK}
Every separable Banach space embeds isometrically into $C(\Delta)$, and into $C[0,1]$, as a $10$-SPR subspace $($here $\Delta$ is the Cantor set$)$.
\end{proposition}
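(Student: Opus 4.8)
The plan is to reduce everything to the already-established fact (\Cref{r:emb into C of ball}, or more precisely its quantitative content via \Cref{l:functionals} and \Cref{Thm1}) that the canonical embedding of a Banach space $E$ into $C(B_{E^*})$, with $B_{E^*}$ in its weak$^*$ topology, is SPR with a uniform constant. For separable $E$ the ball $B_{E^*}$ is a compact metrizable space, hence a continuous image of the Cantor set $\Delta$; the first step is therefore to push the embedding forward along a continuous surjection $\pi\colon\Delta\to B_{E^*}$, using the isometric lattice embedding $C(B_{E^*})\hookrightarrow C(\Delta)$, $\varphi\mapsto\varphi\circ\pi$. This composition-with-$\pi$ map preserves the modulus pointwise (so it preserves $\||f|\wedge|g|\|$ up to the fact that the sup-norm can only go down to equal, since $\pi$ is onto), and it is an isometry on $C(B_{E^*})$; hence the image of $E$ in $C(\Delta)$ still contains no $\varepsilon$-almost disjoint pairs for the same $\varepsilon$ coming from \Cref{l:functionals}, and \Cref{Thm1} then gives SPR with constant $2/\varepsilon$. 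Tracking the constant: \Cref{l:functionals} produces a functional with $|f(x)|\wedge|f(y)|\ge 1/5$, so one gets no $(1/5)$-almost disjoint pairs in $S_E$, whence $10$-SPR. This handles $C(\Delta)$.

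The second step is to transfer from $C(\Delta)$ to $C[0,1]$. Here I would use the standard fact that $C(\Delta)$ embeds isometrically as a sublattice of $C[0,1]$: realize $\Delta\subseteq[0,1]$ as the usual middle-thirds Cantor set and extend each $f\in C(\Delta)$ affinely across the deleted middle-third intervals (the Tietze-type extension that is linear on each complementary interval). This extension operator $C(\Delta)\to C[0,1]$ is a linear isometry, and it is a lattice homomorphism because affine interpolation between the two endpoint values commutes with $\max$, $\min$ and $|\cdot|$ when done on each complementary interval separately (the interpolant of $|f|$ agrees with $|{}$interpolant of $f|$ since on each little interval both are the absolute value of the same affine function once one checks the endpoints have matching signs — one should phrase this carefully, perhaps by noting the extension operator is positive, unital, and multiplicative on the relevant pieces, or simply cite the well-known fact that $C(\Delta)$ is lattice-isometric to a sublattice of $C[0,1]$). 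Since SPR and the non-existence of almost-disjoint pairs are inherited by closed sublattices (exactly as invoked in the proof of \Cref{Thm1}), the image of $E$ in $C[0,1]$ is again $10$-SPR.

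I expect the only genuinely delicate point to be the bookkeeping in the first step: one must make sure that pulling back along the surjection $\pi$ genuinely preserves the modulus and the norm (it does: $\|(\varphi\circ\pi)\|_\infty=\|\varphi\|_\infty$ since $\pi$ is onto, and $|\varphi\circ\pi|=|\varphi|\circ\pi$ pointwise), and that the resulting inequality $\||(f\circ\pi)|\wedge|(g\circ\pi)|\|_{C(\Delta)}=\||f|\wedge|g|\|_{C(B_{E^*})}\ge 1/5$ for norm-one $f,g$ is exactly the hypothesis needed to apply \Cref{Thm1}(ii). Everything else — metrizability of $B_{E^*}$ for separable $E$, existence of a continuous surjection from $\Delta$ onto a compact metric space, the Cantor-to-$[0,1]$ lattice embedding — is classical. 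So the proof is essentially: (1) invoke \Cref{r:emb into C of ball}/\Cref{l:functionals} to get $10$-SPR into $C(B_{E^*})$; (2) compose with a Cantor surjection to land $10$-SPR inside $C(\Delta)$; (3) compose with the affine-extension lattice isometry to land $10$-SPR inside $C[0,1]$.
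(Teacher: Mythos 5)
Your first two steps are exactly the paper's argument: embed $E$ isometrically into $C(B_{E^*})$ with the $1/5$ lower bound coming from \Cref{l:functionals} and \Cref{r:emb into C of ball}, compose with a continuous surjection $\Delta\to B_{E^*}$ (which induces a lattice isometric embedding $C(B_{E^*})\to C(\Delta)$ preserving moduli and norms), and apply \Cref{Thm1} with $\varepsilon=1/5$ to get $10$-SPR in $C(\Delta)$. The genuine gap is in your passage to $C[0,1]$. The piecewise-affine extension operator $T\colon C(\Delta)\to C[0,1]$ is \emph{not} a lattice homomorphism: if $(a,b)$ is a complementary interval of $\Delta$ and $f(a)=1$, $f(b)=-1$, then $Tf$ vanishes at the midpoint of $(a,b)$, so $|Tf|$ dips to $0$ there, whereas $T|f|\equiv 1$ on $[a,b]$; thus $T|f|\neq|Tf|$, exactly in the opposite-sign situation you flagged. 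Moreover, the fallback you propose --- citing that $C(\Delta)$ is lattice-isometric to a sublattice of $C[0,1]$ --- is not available, because it is false: any lattice homomorphism $S\colon C(\Delta)\to C[0,1]$ is a weighted composition operator $Sf(t)=w(t)\,f(\varphi(t))$ with $w=S\one\geq 0$ and $\varphi$ continuous on $\{w>0\}$; since $\Delta$ is totally disconnected, $\varphi$ is constant on each connected component of $\{w>0\}$, and a short continuity argument (only finitely many components can carry $w$ above any fixed $\delta>0$, while every nonempty clopen subset of $\Delta$ would need to receive weight bounded away from $0$) shows no such $S$ can be bounded below. So step (3) as written does not go through, and cannot be patched by any lattice-embedding statement.

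The repair is precisely the point the paper's proof is careful about: you do not need $T$ to commute with lattice operations, only to be a (positive, isometric) \emph{extension} operator, i.e.\ $(Tf)|_\Delta=f$. Then for norm-one $x,y$ in the copy of $E$ inside $C(\Delta)$ one has, pointwise on $\Delta$, $|Tx|\wedge|Ty|=|x|\wedge|y|$, hence
\begin{equation*}
\bigl\||Tx|\wedge|Ty|\bigr\|_{C[0,1]}\;\geq\;\bigl\||x|\wedge|y|\bigr\|_{C(\Delta)}\;\geq\;\tfrac15 ,
\end{equation*}
and this one-sided inequality is all that \Cref{Thm1} requires to conclude $10$-SPR in $C[0,1]$; isometry of the copy of $E$ is preserved because $\|T\|=1$ and $T$ extends. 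With this substitution (which is how the paper argues), your proof coincides with the paper's.
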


\begin{proof}
Fix a separable Banach space $E$. Let $K$ be the unit ball of $E^*$, with its weak$^*$ topology. By \Cref{l:functionals} and \Cref{r:emb into C of ball}, the natural isometric embedding $j : E \to C(K)$ (taking $e$ into the function $K \to \R : e^* \mapsto e^*(e)$) is such that $\| |jx| \wedge |jy| \| \geq 1/5$ whenever $\|x\| = 1 = \|y\|$. As $K$ is compact and metrizable, there exists a continuous surjection $\Delta \to K$ \cite[Theorem 4.18]{Kechris}; this generates a lattice isometric embedding of $C(K)$ into $C(\Delta)$, hence one can find an isometric copy of $E \subseteq C(\Delta)$ so that $\| |x| \wedge |y| \| \geq 1/5$ whenever $x,y$ are norm one elements of $E$.
\\

View $\Delta$ as a subset of $[0,1]$. Then there exists a positive unital isometric extension operator $T  : C(\Delta) \to C[0,1]$ -- that is, for $f \in C(\Delta)$, $Tf|_\Delta = f$; $T 1 = 1$; $\|T\|=1$; and $Tf \geq 0$ whenever $f \geq 0$. The ``standard'' construction of $T$ involves piecewise-affine extensions of functions from $\Delta$ to $[0,1]$; for a more general approach, see the proof of \cite[Theorem 4.4.4]{AK}. One observes that $\| |Tx| \wedge |Ty|\| \geq \||x| \wedge |y|\|$, hence, if $E \subseteq C(\Delta)$ has the property described in the preceding paragraph, then $\| |Tx| \wedge |Ty|\| \geq 1/5$ whenever $x,y \in E$ have norm $1$.
\\

By \Cref{Thm1}, the copies of $E$ in $C(\Delta)$ and $C[0,1]$ described above do $10$-SPR.
\end{proof}

The next result is standard topological fare (cf.~\cite[Theorem 29.2]{MR3728284}).
\begin{lemma}\label{l:topology}
Suppose $K$ is a  compact Hausdorff space, and $t \in U \subseteq K$, where $U$ is an open set. Then there exists an open set $V$ so that $t \in V \subseteq \overline{V} \subseteq U$.
\end{lemma}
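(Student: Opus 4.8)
\textbf{Plan for the proof of \Cref{l:topology}.}

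The statement is the standard fact that a compact Hausdorff space is regular (indeed normal), phrased in the form that open sets can be ``shrunk.'' First I would recall why $K$ is regular: fix $t \in U$ with $U$ open, and consider the closed set $F = K \setminus U$, which does not contain $t$. For each $s \in F$, using that $K$ is Hausdorff, choose disjoint open sets $W_s \ni t$ and $W_s' \ni s$. The family $\{W_s'\}_{s \in F}$ is an open cover of the compact set $F$, so finitely many of them, say $W_{s_1}', \dots, W_{s_n}'$, cover $F$. Then $W = \bigcap_{k=1}^n W_{s_k}$ is an open neighbourhood of $t$, and $W' = \bigcup_{k=1}^n W_{s_k}'$ is an open set containing $F$, with $W \cap W' = \emptyset$. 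Consequently $W \subseteq K \setminus W'$, and since $K \setminus W'$ is closed, $\overline{W} \subseteq K \setminus W' \subseteq K \setminus F = U$.

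This already gives the conclusion with $V := W$: we have $t \in V$, and $\overline{V} \subseteq U$, hence trivially $t \in V \subseteq \overline{V} \subseteq U$. So the single key step is the finite-subcover argument for regularity, and there is essentially no obstacle — it is purely formal point-set topology. The only thing to be careful about is handling the degenerate case $F = \emptyset$ (i.e.\ $U = K$), in which case one simply takes $V = K$; this is harmless and can be mentioned in one line or absorbed by the convention that an empty cover has an empty union.

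If one prefers, the same conclusion follows in one line by quoting that every compact Hausdorff space is normal (hence regular) from a standard reference such as \cite[Theorem 29.2]{MR3728284}, as the paper already signals; but since the self-contained argument is only a few sentences, I would include it for completeness. No result from earlier in the paper is needed here — this lemma is a topological preliminary feeding into the analysis of $C(K)$-spaces in \Cref{t:C(K) SPR}.
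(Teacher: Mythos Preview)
Your proof is correct and is precisely the standard argument showing compact Hausdorff spaces are regular. The paper itself does not give a proof of this lemma at all---it simply labels it ``standard topological fare'' and cites \cite[Theorem 29.2]{MR3728284}---so your self-contained argument is more than what the paper provides, and entirely in the same spirit.
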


\begin{proof}[Proof of \Cref{t:C(K) SPR}]
Suppose first that $K'$ is finite (in this case, $K$ must be scattered). To show that any subspace $E \subseteq C(K)$ fails SPR, consider $C_0(K,K') = \{f \in C(K) : f|_{K'} = 0\}$. Then $\dim C(K)/C_0(K,K') = |K'| < \infty$, hence $E \cap C_0(K,K')$ is infinite dimensional as well. It suffices therefore to show that every infinite dimensional subspace of $C_0(K,K')$ fails SPR.
\\

Note that, in the case of finite $K'$, $C_0(K,K')$ can be identified with $c_0(K \backslash K')$ as a Banach lattice. Indeed, any $f \in c_0(K \backslash K')$ is continuous on $K \backslash K'$, since this set consists of isolated points only. Extend $f$ to a function $\widetilde{f} : K \to \R$ with $\widetilde{f}|_{K'} = 0$, $\widetilde{f}|_{K \backslash K'} = f$. Note that for any $c>0$, the set $\{t \in K \backslash K' : |f(t)| \geq c\}=\{t\in K : |\widetilde{f}(t)|\geq c\}$ is finite, hence closed; consequently, $\{t \in K : |\widetilde{f}(t)| < c\}$ is an open neighborhood of any element of $K'$. From this it follows that $\widetilde{f}$ is continuous.
\\

On the other hand, pick $h \in C_0(K,K')$. We claim that $h |_{K \backslash K'} \in c_0(K \backslash K')$ -- that is, $\{t \in K \backslash K' : |h(t)| > c\}$ is finite for any $c>0$. Suppose, for the sake of contradiction, that this set is infinite for some $c$. By the compactness of $K$, this set must have an accumulation point, which must lie in $K'$. This, however, contradicts the continuity of $h$.
\\

A ``gliding hump'' argument shows that no subspace of $c_0(K \backslash K')$ does SPR. From this we conclude that no subspace of $C(K)$ does SPR if $K'$ is finite.
\\

Now suppose $K$ contains a perfect set. By \cite[Theorem 2, p.~29]{Lacey74}, there exists a continuous surjection $\phi : K \to [0,1]$. This map generates a lattice isometric embedding $T : C[0,1] \to C(K) : f \mapsto f \circ \phi$. However, $C[0,1]$ contains SPR subspaces, by \Cref{p:separble into CK}.
\\

It remains to prove that $C(K)$ contains an SPR copy of $c_0$ when $K$ is scattered, and $K'$ is infinite. Note first that $K'\backslash K''$ must be infinite. Indeed, otherwise any point of $K'' = K' \backslash (K'\backslash K'')$ will be an accumulation point of the same set, and $K''$ will be perfect, which is impossible. 
\\

Observe also that any $t \in K' \backslash K''$ is an accumulation point of $K \backslash K'$. Indeed, suppose otherwise, for the sake of contradiction. Then $t$ has an open neighborhood $W$, disjoint from $K \backslash K'$. If $U$ is another open neighborhood of $t$, then so is $U \cap W$. As $t$ is an accumulation point of $K$, $U \cap W$ must meet $K$, hence also $K'$. This implies $t \in K''$, providing us with the desired contradiction.
\\

Find distinct points $t_1, t_2, \ldots \in K' \backslash K''$. For each $i$ find an open set $A_i \ni t_i$ so that $t_j \notin A_i$ for $j \neq i$.
\Cref{l:topology} permits us to find an open set $U_i$ so that $t_i \in U_i \subseteq \overline{U_i} \subseteq A_i$. Replacing $U_2$ by $U_2 \backslash \overline{U_1}$, $U_3$ by $U_3 \backslash \overline{U_1 \cup U_2}$, and so on, we can assume that the sets $U_i$ are disjoint. \Cref{l:topology} guarantees the existence of open sets $V_i$ so that, for every $i$, $t_i \in V_i \subseteq \overline{V_i} \subseteq U_i$. 
\\

As noted above, each $t_i$ is an accumulation point of $K \backslash K'$. Therefore, we can find distinct points $(s_{ji})_{j=1}^\infty \subseteq (K \backslash K') \cap V_i$. 
For each $n$, let $S_n$ be the closure of $\{s_{j,2n} : j \in \N\}$ (note $S_n \subseteq \overline{V_{2n}} \subseteq U_{2n}$). 
Note that there exists $x^{(n)} \in C(K)$ such that:
\begin{enumerate}
    \item $0 \leq x^{(n)} \leq 1$ everywhere.
    \item $x^{(n)}|_{S_n} = 1/2$.
    \item $x^{(n)}(s_{1,2n-1}) = 1$.
    \item $x^{(n)}(s_{n,2i}) = 1/2$ for $1 \leq i \leq n-1$.
    \item $x^{(n)} = 0$ on $(K \backslash U_{2n}) \backslash \{s_{1,2n-1}, s_{n,2}, s_{n,4}, \ldots, s_{n,2n-2}\}$.
\end{enumerate}
To construct such an $x^{(n)}$, recall that $s_{1,2n-1}, s_{n,2}, s_{n,4}, \ldots, s_{n,2n-2}$ are isolated points of $K$, hence the function $g$, defined by $g(s_{1,2n-1}) = 1$, $g(s_{n,2i}) = 1/2$ for $1 \leq i \leq n-1$, and $g = 0$ everywhere else, is continuous. Further, by Urysohn's Lemma, there exists $h \in C(K)$ so that $0 \leq h \leq 1/2 = h|_{S_n}$, vanishing outside of $U_{2n}$. Then $x^{(n)} = g+h$ has the desired properties.
\\

We claim that $(x^{(n)})$ is equivalent to the standard $c_0$-basis. Indeed, suppose $(\alpha_n) \in c_{00}$, with $\vee_n|\alpha_n|=1$. We need to show $\|\sum_n \alpha_n x^{(n)}\| = 1$. The lower estimate on the norm is clear, since $x = \sum_n \alpha_n x^{(n)}$ attains the value of $\alpha_n$ at $s_{1,2n-1}$.
\\

For an upper estimate, note that $x$ vanishes outside of $\cup_m U_m$, and on $U_m$ if $m$ is large enough. If $m$ is odd ($m = 2n-1$), then the only point of $U_m$ where $x$ does not vanish is $s_{1,2n-1}$, which we have already discussed. If $m$ is even ($m=2n$), then $|x| \leq 1/2$ except for the points $s_{i,2n}$ ($i > n$); at these points, $x$ equals $(\alpha_n + \alpha_i)/2$, which has absolute value not exceeding $1$. 
\\

It remains to show that $E = \spn[x^{(n)} : n \in \N]$ does SPR. In light of \Cref{Thm1}, if suffices to prove that $\| |x| \wedge |y| \| \geq 1/3$ for any norm one $x, y \in E$. Write $x = \sum_n \alpha_n x^{(n)}$ and $y = \sum_n \beta_n x^{(n)}$. Find $n$ and $m$ so that $|\alpha_n| = 1 = |\beta_m|$. If $n=m$, then both $|x|$ and $|y|$ equal $1$ at $s_{1,2n-1}$, so $\| |x| \wedge |y| \| = 1$. 
\\

Otherwise, assume, by relabeling, that $n<m$. If $|\alpha_m| \geq 1/3$, then 
$$\| |x| \wedge |y| \| \geq |x(s_{1,2m-1})| \wedge |y(s_{1,2m-1})| = |\alpha_m| \wedge|\beta_m| \geq \frac13 .$$ 
The case of $|\beta_n| \geq 1/3$ is treated similarly. If $|\alpha_m|, |\beta_n| < 1/3$, then $|x(s_{m,2n})| = |\alpha_n + \alpha_m|/2 > 1/3$, and similarly, $|y(s_{m,2n})| > 1/3$, which again gives  us $\| |x| \wedge |y| \| \geq 1/3$.
\end{proof}

\begin{question}\label{r:examples of SPR subspaces}
The proof of \Cref{t:C(K) SPR} shows that $K'$ is infinite iff $C(K)$ contains an SPR copy of $c_0$. If $K$ is ``large'' enough (in terms of the smallest ordinal $\alpha$ for which $K^{(\alpha)}$ is finite), what SPR subspaces (other than $c_0$) does $C(K)$ have? Note that $c_0$ is isomorphic to $c = C[0,\omega]$ ($\omega$ is the first infinite ordinal). If $K^{(\alpha)}$ is infinite, does $C(K)$ contain an SPR copy of $C[0,\omega^\alpha]$? This question is of interest even for separable $C(K)$, i.e., metrizable $K$. 
\end{question}

In the spirit of \Cref{rem-clark}, it is natural to ask which (isometric) subspaces of $C(K)$ are necessarily SPR. Below we give a ``very local'' condition on a Banach space $E$ (finite or infinite dimensional) which guarantees that any isometric embedding of $E$ into $C(K)$ has SPR. 
\\

Recall (see \cite{Ja64}) that a Banach space $E$ is called \emph{uniformly non-square} if there exists $\vp > 0$ so that, for any norm one $f,g \in E$ we have $\min\{\|f+g\|, \|f-g\|\} < 2-\varepsilon$. Note that $E$ fails to be uniformly non-square iff for every $\varepsilon > 0$ there exist norm one $f,g \in E$ so that $\|f+g\|, \|f-g\| > 2-\varepsilon$. In the real case, this means that $E$ contains $\ell_1^2$ (equivalently, $\ell_\infty^2$) with arbitrarily small distortion. This is incompatible with uniform convexity or uniform smoothness.

\begin{proposition}\label{p:must be SPR}
Any uniformly non-square subspace of $C(K)$ does SPR.
\end{proposition}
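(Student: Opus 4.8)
The plan is to deduce the statement from the characterization of real SPR in \Cref{Thm1}: it suffices to exhibit a single $\delta_0>0$ so that $E$ contains no $\delta_0$-almost disjoint pairs, since then $E$ automatically does $\tfrac{2}{\delta_0}$-SPR. The bridge between the hypothesis and this goal is the elementary fact that, in $C(K)$, two nearly disjoint norm-one functions span a subspace that is nearly isometric to $\ell_\infty^2$ — and, as recalled in the paragraph preceding the statement, $\ell_\infty^2$ with arbitrarily small distortion is precisely what uniform non-squareness forbids.

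The key lemma I would prove is this: if $f,g\in C(K)$ have norm one and $\| |f|\wedge|g| \| = \delta < 1$, then for all scalars $a,b$,
\begin{equation*}
  (1-\delta)\max\{|a|,|b|\} \;\le\; \|af+bg\| \;\le\; (1+\delta)\max\{|a|,|b|\}.
\end{equation*}
Since the modulus and lattice operations in $C(K)$ are pointwise and the norm is the sup norm, the hypothesis says exactly that $\min\{|f(t)|,|g(t)|\}\le\delta$ for every $t\in K$. The upper bound then follows pointwise: at a given $t$ one of $|f(t)|,|g(t)|$ is $\le\delta$, and using $\|f\|=\|g\|=1$ the other factor is $\le 1$, so $|af(t)+bg(t)|\le\delta|a|+|b|$ or $\le|a|+\delta|b|$, in either case $\le(1+\delta)\max\{|a|,|b|\}$; now take the supremum. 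For the lower bound, compactness of $K$ gives $t_0\in K$ with $|f(t_0)|=\|f\|=1$, whence $\min\{1,|g(t_0)|\}\le\delta<1$ forces $|g(t_0)|\le\delta$, and the reverse triangle inequality gives $\|af+bg\|\ge|a|-\delta|b|\ge(1-\delta)|a|$ when $|a|\ge|b|$; the case $|b|>|a|$ is symmetric, using a maximum point of $|g|$. Thus $(a,b)\mapsto af+bg$ realizes $\text{span}\{f,g\}$ as a $\tfrac{1+\delta}{1-\delta}$-isomorphic copy of $\ell_\infty^2$ in $C(K)$.

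To finish, uniform non-squareness of $E$ and the equivalence recalled above give an $\eta>0$ such that $E$ contains no subspace $(1+\eta)$-isomorphic to $\ell_\infty^2$. Choosing $\delta_0\in(0,1)$ with $\tfrac{1+\delta_0}{1-\delta_0}\le 1+\eta$, the key lemma shows $E$ has no $\delta_0$-almost disjoint pair, and \Cref{Thm1} then yields that $E$ does $\tfrac{2}{\delta_0}$-SPR.

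I expect the only real content to be the key lemma; everything else is bookkeeping. The point to watch is that the argument genuinely uses both that the ambient space is $C(K)$ — so that ``almost disjoint'' forces a near-$\ell_\infty^2$ structure rather than merely a bounded-distortion one — and that $K$ is compact, so that moduli attain their suprema. If one prefers not to invoke the $\ell_\infty^2$ characterization, the same lemma gives a fully self-contained argument: once $\delta<1$ the vectors $p=(f+g)/\|f+g\|$ and $q=(f-g)/\|f-g\|$ are norm-one elements of $E$, and expanding $p\pm q$ in the basis $\{f,g\}$ and applying the lemma shows $\min\{\|p+q\|,\|p-q\|\}\ge\tfrac{2(1-\delta)}{1+\delta}$, which for $\delta$ small exceeds $2-\varepsilon_0$ and thereby contradicts the definition of uniform non-squareness directly.
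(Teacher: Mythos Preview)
Your proof is correct and follows essentially the same route as the paper's: both argue by contraposition via \Cref{Thm1}, showing that a $\delta$-almost disjoint pair $f,g\in S_E$ in $C(K)$ forces $\spn\{f,g\}$ to be a near-$\ell_\infty^2$, which violates uniform non-squareness. The only cosmetic differences are that you prove the full two-sided estimate $(1-\delta)\max\{|a|,|b|\}\le\|af+bg\|\le(1+\delta)\max\{|a|,|b|\}$ using compactness of $K$ to locate maximum points, whereas the paper uses the pointwise inequality $|f|\vee|g|-|f|\wedge|g|\le|f\pm g|\le|f|\vee|g|+|f|\wedge|g|$ together with the M-space identity $\|\,|f|\vee|g|\,\|=1$ and only needs the cases $a=b=\pm1$; your ``self-contained'' alternative with $p=(f+g)/\|f+g\|$, $q=(f-g)/\|f-g\|$ is then exactly the paper's computation.
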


\begin{proof}
Suppose $E$ is a non-SPR subspace of $C(K)$; we shall show that it fails to be uniformly non-square. To this end, fix $\varepsilon \in (0,1/2)$; by \Cref{Thm1}, there exist norm one $f, g \in E$ with $\| |f| \wedge |g| \| < \varepsilon$. Pointwise evaluation shows that
$$
|f| \vee |g| + |f| \wedge |g| \geq |f+g| \geq |f| \vee |g| - |f| \wedge |g| .
$$
As the ambient lattice is an M-space, we have $\| |f| \vee |g| \| = 1$, hence
$$
1 - \vp < \| |f| \vee |g| \| - \| |f| \wedge |g| \| \leq \|f+g\| \leq \| |f| \vee |g| \| + \| |f| \wedge |g| \| < 1 + \vp .
$$
Replacing $g$ by $-g$, we conclude that $1 - \vp < \|f-g\| < 1 + \vp$.
\\

Let $u = (f+g)/\|f+g\|$ and $v = (f-g)/\|f-g\|$. Then
$$
\big\| u - (f+g) \big\| = \big| 1 -\|f+g\| \big| < \varepsilon ,
$$
and similarly, $\big\| v - (f-g) \big\| < \varepsilon$. Then
$$
\|u+v\| \geq \big\| (f+g) + (f-g) \| - \big\| u - (f+g) \big\| - \big\| v - (f-g) \big\| > 2 - 2 \varepsilon ,
$$
and likewise, $\|u-v\| > 2 - 2 \varepsilon$. As $\varepsilon$ is arbitrary, $E$ fails to be uniformly non-square.
\end{proof}

For infinite dimensional subspaces, \Cref{p:must be SPR} is only meaningful when $K$ is not scattered. Indeed, if $K$ is scattered, then $C(K)$ is $c_0$-saturated \cite[Theorem 14.26]{FHHMZ}, hence any infinite dimensional subspace of $C(K)$ contains an almost isometric copy of $c_0$ \cite[Proposition 2.e.3]{LT1}. In particular, such subspaces contain almost isometric copies of $\ell_1^2$, hence they cannot be uniformly non-square.
\\

In light of \Cref{p:must be SPR}, we ask:

\begin{question}
Which Banach spaces $E$ isometrically embed into $C(K)$ in a non-SPR way? 
\end{question}

Note that containing an isometric copy of $\ell_\infty^2$ (and consequently, failing to be uniformly non-square) does not automatically guarantee the existence of a non-SPR embedding into $C(K)$ (in this sense, the converse to \Cref{p:must be SPR} fails). In the following example we look at isometric embeddings only; one can modify this example to allow for sufficiently small distortions.
\\

\begin{proposition}\label{SPR embedding 3d}
There exists a $3$-dimensional space $E$, containing $\ell_\infty^2$ isometrically $($and consequently, failing to be uniformly non-square$)$, so that, if $K$ is a Hausdorff compact, and $J : E \to C(K)$ is an isometric embedding, then $\| |Jx| \wedge |Jy| \| \geq 1/3$ for any norm one $x,y \in E$.
\end{proposition}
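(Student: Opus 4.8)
The plan is to build $E$ explicitly as a $3$-dimensional subspace of some $C(L)$ (a concrete model space), exhibit an isometric copy of $\ell_\infty^2$ inside it, and then verify that in \emph{any} isometric embedding into \emph{any} $C(K)$ the quantity $\| |Jx| \wedge |Jy| \|$ stays bounded below by $1/3$. A natural candidate: take $L = \{1,2,3,4\}$ (four points) and let $E \subseteq C(L) = \ell_\infty^4$ be spanned by three vectors chosen so that the unit ball of $E$, viewed as a polytope in $\R^3$, has the property that every vertex (extreme point) of $B_E$ has at least \emph{two} coordinates (among the four) of modulus $1$ — with the added feature that for any two norm-one elements $x, y$, at some coordinate $k$ both $|x(k)|$ and $|y(k)|$ are $\ge 1/3$. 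For instance, one can try $e_1 = (1,1,0,0)$, $e_2 = (0,0,1,1)$, and $e_3 = (1,-1,1,-1)$, or a small perturbation thereof; the copy of $\ell_\infty^2$ is then $\spn[e_1,e_2]$, which is isometric to $\ell_\infty^2$ since these two vectors have disjoint supports and norm one. The first step is to pin down such a concrete $E$ and compute $B_E$, i.e. list all extreme points, so that one can read off the coordinate-covering property.

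The second, and conceptually central, step is to pass from ``the model embedding $E \hookrightarrow C(L)$'' to ``an arbitrary isometric embedding $J: E \to C(K)$.'' Here the key tool is that an isometric embedding into a $C(K)$-space is controlled by evaluation functionals: for each $t \in K$ the map $x \mapsto (Jx)(t)$ is a norm-$\le 1$ functional on $E$, and for a norm-one $x$ there is some $t$ with $|(Jx)(t)| = 1$; moreover the set of functionals of the form $\pm\,\mathrm{ev}_t \circ J$ must norm $E$, and in fact its weak$^*$-closure must contain all extreme points of $B_{E^*}$ (this is the standard fact underlying $\dim$-minimal isometric representations: the extreme points of $B_{E^*}$ are exactly the functionals that can be ``realized'' at points of $K$ up to sign, by Arens–Kelley / the Choquet-boundary description of isometric $C(K)$-embeddings). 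So the problem reduces to a purely finite-dimensional statement about $E$ and $B_{E^*}$: \emph{for any two norm-one $x, y \in E$, there is an extreme point $\phi$ of $B_{E^*}$ with $|\phi(x)| \ge 1/3$ and $|\phi(y)| \ge 1/3$.} This is where the choice of $E$ must be made carefully: I would choose $E$ so that $E^*$ (equivalently, the dual polytope $B_{E^*}$) has few extreme points and these ``cover'' the sphere of $E$ efficiently in the required quantitative sense.

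The third step is the finite-dimensional verification itself: check, by a direct (and small) case analysis over the finitely many extreme points of $B_{E^*}$ and the geometry of $S_E$, that the $1/3$ bound holds. For the candidate above, $B_{E^*}$ has a small vertex set coming from the four coordinate functionals restricted to $E$ together with their negatives (and possibly a few more vertices from the polytope structure); one then argues that if $x$ attains norm one ``at coordinate direction $i$'' and $y$ at ``coordinate direction $j$,'' then either $i,j$ coincide (giving bound $1$) or the structure of $e_3$ forces a third coordinate at which both are $\ge 1/3$ — this is exactly the $c_0$-argument in the proof of \Cref{t:C(K) SPR}, transplanted to three dimensions. Finally, invoke \Cref{Thm1} to translate $\| |Jx| \wedge |Jy| \| \ge 1/3$ into the SPR conclusion, and note $\spn[e_1,e_2] \cong \ell_\infty^2$ gives the failure of uniform non-squareness, so $E$ is \emph{not} uniformly non-square yet every isometric embedding into a $C(K)$ is SPR, which is the desired conclusion. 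I expect the main obstacle to be \textbf{step two}: making precise and correctly invoking the statement that every extreme point of $B_{E^*}$ is realized (up to sign) by an evaluation functional in any isometric $C(K)$-representation — one must be careful that it is the \emph{extreme points of the dual ball}, not arbitrary norming functionals, that are forced to appear, and that a weak$^*$-limit argument suffices since $|Jx(\cdot)|$ is continuous on the compact $K$.
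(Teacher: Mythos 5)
Your overall strategy matches the paper's: reduce the statement about arbitrary isometric embeddings $J:E\to C(K)$ to a finite-dimensional claim about the extreme points of $B_{E^*}$ (the paper's \Cref{l:extreme points} proves exactly what you invoke in your second step, via the strict quotient $J^*:M(K)\to E^*$ and the fact that extreme points of $B_{M(K)}$ are $\pm\delta_t$), and then verify by a case analysis that every pair of norm-one vectors is simultaneously seen, at level $\geq 1/3$, by some extreme functional. So the reduction step is sound and is the same as the paper's.

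However, there is a genuine gap in the part you left open, and your concrete candidate does not work. With $e_1=(1,1,0,0)$, $e_2=(0,0,1,1)$, $e_3=(1,-1,1,-1)$ in $\ell_\infty^4=C(\{1,2,3,4\})$, the vectors $e_1$ and $e_2$ are norm one and disjointly supported, so for the inclusion $J:E\hookrightarrow C(\{1,2,3,4\})$ (which is itself an isometric embedding into a $C(K)$-space) one has $\||Je_1|\wedge|Je_2|\|=0$; hence this $E$ cannot satisfy the conclusion, no matter how the case analysis is organized. Dually: the norm of $a e_1+b e_2+c e_3$ is $\max\{|a|,|b|\}+|c|$, and every extreme point of $B_{E^*}$ annihilates either $e_1$ or $e_2$, so the finite-dimensional claim fails for the pair $(e_1,e_2)$. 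The same obstruction kills any model in which the isometric $\ell_\infty^2$ is spanned by disjointly supported functions, and a ``small perturbation'' cannot fix it while keeping $\ell_\infty^2$ exactly isometric without a genuinely new choice of norm. This choice is precisely the nontrivial content of the proposition: the paper takes $\|(x,y,z)\|=\max\{|x|,|y|,\tfrac12(|x|+|y|+|z|)\}$, for which $\{(x,y,0)\}$ is still isometric to $\ell_\infty^2$ but the dual ball acquires the extreme points $(\pm\tfrac12,\pm\tfrac12,\pm\tfrac12)$, which evaluate both $\ell_\infty^2$-directions at level $\tfrac12$; the subsequent three-case analysis (over which of the three expressions in the norm attains the maximum for $x$ and for $y$) is where the constant $1/3$ is actually earned. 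As written, your plan defers exactly this construction and verification, so it does not yet constitute a proof.
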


The following lemma is needed for the proof, and may be of interest in its own right.

\begin{lemma}\label{l:extreme points}
Suppose $K$ is a Hausdorff compact, $E$ is a Banach space, and $J : E \to C(K)$ is an isometric embedding. Denote by ${\mathcal{F}}$ the set of all extreme points of the unit ball of $E^*$. Then, for any $x, y \in E$, $\| |Jx| \wedge |Jy| \| \geq \sup_{e^* \in {\mathcal{F}}} |e^*(x)| \wedge |e^*(y)|$.
\end{lemma}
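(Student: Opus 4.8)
The plan is to reduce everything to the classical description of the extreme points of the dual unit ball of a $C(K)$-space: since we are working with real scalars, the extreme points of $B_{C(K)^*}$ are exactly the evaluation functionals $\pm\delta_t$, $t\in K$, where $\delta_t(f)=f(t)$. The other ingredient is the standard fact that a nonempty face of a weak$^*$-compact convex set must contain an extreme point of the ambient set. Given these, the lemma follows by extending each $e^*\in\mathcal{F}$ to a well-chosen functional on $C(K)$ and evaluating.

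First I would identify $E$ with the subspace $J(E)\subseteq C(K)$. Since $J\colon E\to J(E)$ is a surjective isometry, the map $\xi\mapsto \xi\circ J$ is an isometric isomorphism of $(J(E))^*$ onto $E^*$, hence carries extreme points to extreme points. Therefore, fixing $e^*\in\mathcal{F}$, the functional $\tilde e^*:=e^*\circ J^{-1}\in (J(E))^*$ is an extreme point of $B_{(J(E))^*}$, and it satisfies $\tilde e^*(Jz)=e^*(z)$ for all $z\in E$.

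Next, set $H:=\{\phi\in B_{C(K)^*}:\phi|_{J(E)}=\tilde e^*\}$. By the Hahn--Banach theorem $H\neq\emptyset$, and $H$ is convex and weak$^*$-closed, hence weak$^*$-compact. The key point is that $H$ is a \emph{face} of $B_{C(K)^*}$: if $\phi\in H$ and $\phi=\tfrac12(\psi_1+\psi_2)$ with $\psi_1,\psi_2\in B_{C(K)^*}$, then restricting to $J(E)$ exhibits the extreme point $\tilde e^*$ as the midpoint of $\psi_1|_{J(E)}$ and $\psi_2|_{J(E)}$, both of norm $\le 1$; extremality of $\tilde e^*$ forces $\psi_i|_{J(E)}=\tilde e^*$, i.e.\ $\psi_i\in H$. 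By the Krein--Milman theorem $H$ has an extreme point $\phi_0$, and since $H$ is a face, $\phi_0$ is an extreme point of $B_{C(K)^*}$. By the description of extreme points recalled above, $\phi_0=\varepsilon\delta_t$ for some $t\in K$ and $\varepsilon\in\{-1,+1\}$. Then for every $z\in E$,
\[
e^*(z)=\tilde e^*(Jz)=\phi_0(Jz)=\varepsilon\,(Jz)(t),\qquad\text{so}\qquad |e^*(z)|=|(Jz)(t)| .
\]
Applying this to $z=x$ and $z=y$ and computing the modulus and infimum pointwise in $C(K)$,
\[
|e^*(x)|\wedge|e^*(y)|=|(Jx)(t)|\wedge|(Jy)(t)|=\bigl(|Jx|\wedge|Jy|\bigr)(t)\le\bigl\|\,|Jx|\wedge|Jy|\,\bigr\| ,
\]
and taking the supremum over $e^*\in\mathcal{F}$ gives the claim.

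The only substantive step is passing from an arbitrary Hahn--Banach extension of $\tilde e^*$ to an extension that is extreme in $B_{C(K)^*}$; this is exactly what the ``$H$ is a face'' observation handles. The remaining inputs --- the identification of the extreme points of $B_{C(K)^*}$ and the pointwise computation of lattice operations in $C(K)$ --- are routine, though one must keep track of the standing real-scalar convention, which is precisely what makes those extreme points equal to $\{\pm\delta_t\}$.
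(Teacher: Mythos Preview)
Your proof is correct and follows essentially the same route as the paper's own argument. The paper phrases things in terms of the adjoint $J^*:M(K)\to E^*$ and the fiber $S=\{\mu\in B_{M(K)}:J^*\mu=e^*\}$, showing that extreme points of $S$ are extreme points of $B_{M(K)}$; your set $H$ is exactly this fiber viewed via restriction to $J(E)$, and your ``$H$ is a face'' observation is precisely the paper's verification that an extreme point of $S$ is extreme in the whole ball.
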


\begin{proof}
 Standard duality considerations tell us that $J^* : M(K) \to E^*$ ($M(K)$ stands for the space of Radon measures on $K$) is a strict quotient -- that is, for any $e^* \in E^*$ there exists $\mu \in M(K)$ so that $\|\mu\| = \|e^*\|$ and $J^* \mu = e^*$. Further, we claim that, for any $e^* \in {\mathcal{F}}$, there exists $t \in K$ so that $J^* \delta_t \in \{e^*,-e^*\}$. Indeed, the set $S = \{\mu \in M(K) : \|\mu\| \leq 1, J^* \mu = e^*\}$ is weak$^*$-compact, hence it is the weak$^*$-closure of the convex hull of its extreme points. We claim that any such extreme point is also an extreme point of $\{\mu \in M(K) : \|\mu\| \leq 1\}$. Indeed, suppose $\mu = (\mu_1+\mu_2)/2$, with $\|\mu_1\|, \|\mu_2\| \leq 1$. Then $e^* = (J^* \mu_1 + J^* \mu_2)/2$, which guarantees that $e^* = J^* \mu_1 = J^* \mu_2$, so $\mu_1, \mu_2 \in S$, and therefore, they coincide with $\mu$.
 \\
 
 To finish the proof, recall that the extreme points of $\{\mu \in M(K) : \|\mu\| \leq 1\}$ are point evaluations and their opposites.
\end{proof}

\begin{proof}[Proof of \Cref{SPR embedding 3d}]
To obtain $E$, equip $\R^3$ with the norm
\begin{equation}
\label{eq:norm}
\|(x,y,z)\| = \max\big\{ |x|, |y|, \frac12 \big( |x|+|y|+|z| \big) \big\} .
\end{equation}
Clearly $\{(x_1,x_2,0) : x_1, x_2 \in \R\}$ gives us an isometric copy of $\ell_\infty^2$ in $E$.
Note that the unit ball of $E^*$ is a polyhedron with vertices $(\pm 1, 0, 0)$, $(0, \pm 1, 0)$, and $(\pm 1/2, \pm 1/2, \pm 1/2)$; we denote this set of vertices by ${\mathcal{F}}$. In light of \Cref{l:extreme points}, we have to show that, for any norm one $x = (x_1, x_2, x_3)$ and $y = (y_1, y_2, y_3)$ in $E$, there exists $e^* \in {\mathcal{F}}$ so that $|e^*(x)| \wedge |e^*(y)| \geq 1/3$. 
\\

In searching for $e^*$, we deal with several cases separately. Note first that, if $|x_1| \wedge |y_1| \geq 1/3$, then $e^* = (1,0,0)$ has the desired properties. The case of $|x_2| \wedge |y_2| \geq 1/3$ is treated similarly. Henceforth we assume $|x_1| \wedge |y_1|, |x_2| \wedge |y_2| < 1/3$. In light of \eqref{eq:norm}, we need to consider three cases:

(i) $|x_1| = 1 = |y_2|$ or  $|x_2| = 1 = |y_1|$.

(ii) Either $|x_1| \vee |x_2| = 1$ and $|y_1| + |y_2| + |y_3| = 2$, or $|y_1| \vee |y_2| = 1$ and $|x_1| + |x_2| + |x_3| = 2$.

(iii) $|x_1| + |x_2| + |x_3| = 2 = |y_1| + |y_2| + |y_3|$.

In all the three cases, we look for $e^* = (\vp_1, \vp_2, \vp_3)/2$, with $\vp_1, \vp_2, \vp_3 = \pm 1$ selected appropriately.
\\

{\bf Case (i)}.  We shall assume $x_1 = 1 = y_2$, as other permutations of indices and choices of sign are handled similarly. Select $\vp_1 = 1$, and take $\vp_3$ so that $\vp_3 x_3 \geq 0$. 
Pick $\vp_2 = 1$ if $\vp_1 y_1 + \vp_3 y_3 \geq 0$ and $\vp_2 = -1$ otherwise. Then $|x_2| < 1/3$, hence
$$e^*(x) = \frac12 \big(\vp_1 + \vp_2 x_2 + \vp_3 x_3\big) \geq \frac{1 - |x_2|}2 > \frac{1 - 1/3}2 = \frac13.$$  
Further,
$$|e^*(y)| = \frac{|\vp_1 y_1 + \vp_2 + \vp_3 y_3|}2 \geq \frac12.$$

{\bf Case (ii)}. We deal with $x_1 = 1$ (and consequently, $|y_1| < 1/3$) and $|y_1| + |y_2| + |y_3| = 2$, as other possible settings can be treated similarly. Let $\vp_1 = 1$. If $|x_2| < 1/3$, select $\vp_3$ so that $\vp_3 x_3 \geq 0$. Pick $\vp_2$ so that $\vp_2 y_2$ and $\vp_3 y_3$ have the same sign. Then
$$
|e^*(x)| \geq \frac{1 + |x_3| - |x_2|}2 \geq \frac{1 - |x_2|}2 \geq \frac{1 - 1/3}2 = \frac13 , $$ 
and
$$ |e^*(y)| \geq \frac{|y_2| + |y_3| - |y_1|}2 = \frac{2 - 2|y_1|}2 \geq \frac{2 - 2 \cdot 1/3}2 = \frac23 . $$

Suppose, conversely, that $|x_2| \geq 1/3$, hence $|y_2| < 1/3$. Let $\vp _2 = \sign \, x_2$. Select $\vp_3$ so that $\vp_1 y_1$ and $\vp_3 y_3$ are of the same sign. Then $|x_3| \leq 2 - (1+|x_2|) = 1-|x_2|$, hence
$$
|e^*(x)| \geq \frac{1 + |x_2| - |x_3|}2 \geq \frac{2|x_2|}2 \geq \frac13 . $$ 
On the other hand, $2-|y_2| = |y_1|+|y_3|$ and
$$ |e^*(y)| \geq \frac{|y_1| + |y_3| - |y_2|}2 = \frac{2 - 2|y_2|}2 \geq \frac{2 - 2 \cdot 1/3}2 \geq \frac23 . $$

{\bf Case (iii)}.
If $|x_1|, |x_2| < 1/3$, let $\vp_3 = \sign \, x_3$, and select $\vp_1, \vp_2$ so that both $\vp_1 y_1$ and $\vp_2 y_2$ have the same sign as $\vp_3 y_3$. Then
$$
|e^*(x)| \geq \frac{|x_3| - |x_1| - |x_2|}2 = \frac{2 - 2(|x_1| + |x_2|)}2 \geq \frac{2-4\cdot1/3}2 = \frac13, $$ 
and 
$$ |e^*(y)| = \frac{|y_1| + |y_2| + |y_3|}2 = 1. $$
The case of $|y_1|, |y_2| < 1/3$ is handled similarly.
\\

Now suppose neither of the above holds. Up to a permutation of indices, we assume that  $|x_1| \geq 1/3$ (hence $|y_1| < 1/3$), and $|y_2| \geq 1/3$ (hence $|x_2| < 1/3$).  Then let $\vp_1 = \sign \, x_1$ and $\vp_3 = \sign \, x_3$. Pick $\vp_2$ so that $\sign \, \vp_2 y_2 = \sign \, \vp_3 y_3$, then
$$
|e^*(x)| \geq \frac{|x_1| + |x_3| - |x_2|}2 = \frac{2 - 2|x_2|}2 \geq \frac{2-2\cdot1/3}2 = \frac23 , $$ 
and likewise,
$$ |e^*(y)| \geq \frac{|y_2| + |y_3| - |y_1|}2 \geq \frac23 . \qedhere $$
\end{proof}

\section{Open problems}\label{OP}

We now list some open questions and directions for further research. The reader can find additional questions embedded throughout the paper.

\begin{question}\label{Q1}
(Classification of SPR subspaces): Given a Banach lattice $X$, it is of interest to classify the closed subspaces of $X$ that do SPR. This question, of course, can be interpreted in various ways. Possibly the crudest of these is to classify the closed subspaces of $X$ doing SPR up to Banach space isomorphism. One can then refine this  classification by tracking the optimal SPR and isomorphism constants. On the other hand, one can ask about the ``structure" of the collection of SPR subspaces of $X$. For example, whether  certain natural candidates do SPR, or whether they have a further subspace/perturbation which does SPR. Compare with \cite[Theorem 1.1]{calderbank2022stable}, which, within a restricted class of subspaces of $L_2(\mathbb{R})$, is able to classify those that do SPR.
\end{question}


(Discretization):  Phase retrieval is most often studied in terms of recovering a function $f$ from $|Tf|$ where $T$ is a linear transformation, such as the Fourier transform or Gabor transform.  However, any use of phase retrieval in applications requires sampling at only finitely many points.  
Gabor frames are constructed by sampling the short-time Fourier transform at a lattice; however, any frame constructed by sampling the Gabor transform at an integer lattice cannot do phase retrieval.  There has been significant recent interest in determining which sampling points allow for constructing frames which do phase retrieval \cite{MR4162323,grohsphase,grohs2021stable}.
\\

The problem of sampling continuous frames which do stable phase retrieval to obtain frames which do stable phase retrieval was introduced in \cite{fg22} and was shown to be connected to important integral norm discretization problems in approximation theory (such as in \cite{DPSTT,dai2021universal,kashin2021sampling,LT21}).  In \cite{MR3901674} it is proven that if $(x_t)_{t\in\Omega}$ is a bounded  continuous frame of a separable Hilbert space $H$ then there exists sampling points $(t_j)_{j\in J}$ in $\Omega$ such that $(x_{t_j})_{j\in J}$ is a frame of $H$.  The corresponding quantitative and finite dimensional theorem in \cite{LT21} gives that for each $\beta>0$ there are universal constants $B>A>0$ so that  if $(x_t)_{t\in\Omega}$ is a continuous Parseval frame of an $n$-dimensional Hilbert space $H$ and $\|x_t\|\leq \beta n^{1/2}$ for all $t\in\Omega$ then there exists $m$ on the order of $n$ sampling points $(t_j)_{j=1}^m$ in $\Omega$ so that $(m^{-1/2}x_{t_j})_{j=1}^m$ is a frame of $H$ with lower frame bound $A$ and upper frame bound $B$.  The proof of the above theorem relies on the celebrated solution to the Kadison-Singer Problem and its connection to frame partitioning \cite{MSS,NOU}.  It is natural to consider if this discretization theorem holds as well for stable phase retrieval, and the following question is stated in \cite{fg22}.
\begin{question}\label{Q:disc1}
Let $C,\beta>0$.  Do there exist constants $D,\kappa>0$ so that for all $n\in\N$ there exists $m\leq Dn$ so that the following is true:  Let $H$  be an $n$-dimensional Hilbert space, $(\Omega,\mu)$  a probability space, and $(x_t)_{t\in \Omega}$  a continuous  Parseval frame of $H$ which does $C$-stable phase retrieval such that $\|x_t\|\leq \beta\sqrt{n}$ for all $t\in \Omega$.  Then there exists a sequence of sampling points $(t_j)_{j=1}^m\subseteq\Omega$ such that $(m^{-1/2}x_{t_j})_{j=1}^m$ is a frame of $H$ which does $\kappa$-stable phase retrieval.
\end{question}

  Note that if $(x_t)_{t\in \Omega}$ is a continuous  Parseval frame of $H$ over a probability space $\Omega$ then the analysis operator $\Theta(x)=(\langle x,x_t\rangle)_{t\in\Omega}$ is an isometric embedding of $H$ into $L_2(\Omega)$. We have that the continuous frame  $(x_t)_{t\in \Omega}$ does $C$-stable phase retrieval if and only if the range of the analysis operator $\Theta(H)$ does $C$-stable phase retrieval as a subspace of $L_2(\Omega)$.  
 In \Cref{p>2 not down} we prove that there exists a subspace $E\subseteq L_2(\Omega)$ such that $E$ does stable phase retrieval as a subspace of $L_2(\Omega)$ but that $E$ does not do stable phase retrieval as a subspace of $L_1(\Omega)$.  As shown by \Cref{p is not always 2}, doing stable phase retrieval in $L_1(\Omega)$ gives a lot of useful additional structure.  This motivates the following problem.

\begin{question}\label{Q:disc1}
Let $C,\beta>0$.  Do there exist constants $D,\kappa>0$ so that for all $n\in\N$ there exists $m\leq Dn$ so that the following is true: Let  $H$ be an $n$-dimensional Hilbert space, $(\Omega,\mu)$  a probability space, and $(x_t)_{t\in \Omega}$  a continuous  Parseval frame of $H$ with analysis operator $\Theta$ such that $\Theta(H)$ does $C$-stable phase retrieval as a subspace of $L_1(\Omega)$ and as a subspace of $L_2(\Omega)$, and $\|x_t\|\leq \beta\sqrt{n}$ for all $t\in \Omega$.  Then there exists  a sequence of sampling points $(t_j)_{j=1}^m\subseteq\Omega$ such that $(m^{-1/2}x_{t_j})_{j=1}^m$ is a frame of $H$ which does $\kappa$-stable phase retrieval.
\end{question}

The previous two questions on constructing frames by sampling continuous frames relate to discretizing the $L_2$-norm on a subspace of $L_2(\Omega)$.  There is significant interest in approximation theory on discretizing the $L_p$-norm on finite dimensional subspaces of $L_p(\Omega)$ which are called Marcinkiewicz-type discretization problems \cite{DPSTT,dai2021universal,kashin2021sampling,LT21}.  For $p\neq 2$, it is too much to ask for the number of sampling points to be on the order of the dimension of the subspace.  This leads to the following general problem on discretizing stable phase retrieval.

\begin{question}
Let $E\subseteq L_p(\Omega)$ be an $n$-dimensional subspace for some $1\leq p<\infty$ and probability space $\Omega$.  Let $C>0$ and let $f:\N\rightarrow\N$ be strictly increasing.  What properties on $E$ imply that there exists $m\leq f(n)$ and sampling points $(t_j)_{j=1}^m\subseteq\Omega$ so that the subspace $\{(m^{-1/p} x(t_j))_{j=1}^m:x\in E\}$  does $C$-stable phase retrieval in $\ell_p^m$?
 What properties on $E$ imply that there exists $m\leq f(n)$, sampling points $(t_j)_{j=1}^m\subseteq\Omega$, and weights $(w_j)_{j=1}^m$ with $\sum_{j=1}^m |w_j|^p=1$
 so that the subspace $\{(w_j x(t_j))_{j=1}^m:x\in E\}$  does $C$-stable phase retrieval in $\ell_p^m$?

\end{question}

\bibliographystyle{plain}
\bibliography{refs.bib}

\end{document}